\newtheorem{theorem}{Theorem}[section]
\newtheorem{lemma}[theorem]{Lemma}
\newtheorem{corollary}[theorem]{Corollary}
\newtheorem{proposition}[theorem]{Proposition}
\theoremstyle{definition}
\newtheorem{definition}[theorem]{Definition}
\newtheorem{remark}[theorem]{Remark}
\newtheorem{question}[theorem]{Question}
\newcommand{\la}{\langle}
\newcommand{\ra}{\rangle}
\newcommand{\B}{\mathbb{B}}
\newcommand{\C}{\mathbb{C}}
\newcommand{\D}{\mathbb{D}}
\renewcommand{\H}{\mathbb{H}}
\newcommand{\N}{\mathbb{N}}
\newcommand{\R}{\mathbb{R}}
\newcommand{\Z}{\mathbb{Z}}
\newcommand{\Aut}{{\rm Aut}}
\newcommand{\id}{{\sf id}}
\def\de{\partial}
\def\v{\varphi}
\renewcommand{\Im}{{\sf Im}\,}
\numberwithin{equation}{section}
\begin{document}
\title[Canonical models]{Canonical models for the forward and backward iteration of holomorphic maps}
\author[L. Arosio]{Leandro Arosio}
\address{L. Arosio: Dipartimento Di Matematica\\
Universit\`{a} di Roma \textquotedblleft Tor Vergata\textquotedblright\ \\
Via Della Ricerca Scientifica 1, 00133 \\
Roma, Italy} 
\email{arosio@mat.uniroma2.it}
\subjclass[2010]{Primary 32H50; Secondary 37F99}
\keywords{Canonical models; holomorphic iteration}

\thanks{Supported by the ERC grant ``HEVO - Holomorphic Evolution Equations'' n. 277691}
\begin{abstract}
We prove the existence and the essential uniqueness of canonical models  for the forward (resp. backward) iteration of a holomorphic self-map $f$ of a cocompact Kobayashi hyperbolic complex manifold, such as the ball $\B^q$ or the polydisc $\Delta^q$. This is done performing a time-dependent conjugacy of the  dynamical system $(f^n)$, obtaining in this way a non-autonomous dynamical system admitting a relatively compact forward (resp. backward) orbit, and then proving the existence of a natural complex structure on a suitable quotient of the direct limit (resp.  subset of the inverse limit). As a corollary we prove the existence of a holomorphic solution with values in the upper half-plane of the Valiron equation for a holomorphic self-map of the unit ball.
\end{abstract}
\maketitle
\tableofcontents

\section*{Introduction}

In order to study the forward or backward iteration of  a holomorphic self-map $f\colon X\to X$  of a complex manifold, it is  natural  
to search for a semi-conjugacy of $f$ with some automorphism of a complex manifold. 
The first example of this approach is old as complex dynamics itself: if $\D\subset \C$ is the unit disc and  $f\colon \D\to \D$ is a holomorphic self-map such that $f(0)=0$ and   $0<|f'(0)|<1$,
then in 1884 K\"onigs proved \cite{Ko} that there exists a unique holomorphic mapping $h\colon \D\to \C$ solving the Schr\"oder equation 
$$h\circ f=f'(0) h,$$ and satisfying $h'(0)=1$.
Clearly $h$ gives a semi-conjugacy between $f$ and the automorphism $z\mapsto f'(0) z$ of $\C$. 
Notice that $\cup_{n\geq 0}f'(0)^{-n}h(\D)=\C$.

We call {\sl semi-model} for $f$
a triple $(\Lambda,h,\v)$, where $\Lambda$ is a complex manifold called the {\sl base space}, $h\colon X\to \Lambda$ is a 
holomorphic mapping  called the {\sl intertwining mapping} and $\v\colon \Lambda \to \Lambda$ is an automorphism, such that the following diagram commutes:
$$\xymatrix{X\ar[r]^{f}\ar[d]_{h}& X\ar[d]^{h}\\
\Lambda\ar[r]^{\v}& \Lambda,}$$
and $\Lambda=\bigcup_{n\geq0} \v^{-n}(h(X))$.
 A {\sl  model} for $f$ is a   semi-model  $(\Lambda,h,\v)$ such that  the   intertwining  mapping $h$  is univalent on an $f$-absorbing domain, that is, a domain $A$ such that $f(A)\subset A$ and such that every orbit of $f$ eventually lies in $A$.

There is a ``dual'' way of semi-conjugating $f$ with an automorphism: we call {\sl pre-model} for $f$
a triple $(\Lambda,h,\v)$, where $\Lambda$ is a complex manifold called the {\sl base space}, $h\colon \Lambda\to X$ is a 
holomorphic mapping called the {\sl intertwining mapping} and $\v\colon \Lambda \to \Lambda$ is an automorphism, such that the following diagram commutes:
$$\xymatrix{\Lambda\ar[r]^{\v}\ar[d]_{h}& \Lambda\ar[d]^{h}\\
X\ar[r]^{f}& X.}$$

We refer to \cite{ABmod, Ar, Ar2} for a brief history and recent developments in the theories of semi-models and pre-models. 
We recall that semi-models and pre-models, besides giving informations on the iteration of the self-map $f$, can also be fruitfully applied to the study of composition operators \cite{BS,Cowen3, jury,PC0} and of commuting self-maps \cite{Cowen2,B, BTV}.

We now need to recall some definitions and results for a holomorphic self-map $f$ of the unit disc $\D\subset \C$. A point $\zeta\in \partial \D$ is a {\sl boundary regular fixed point} if  $\angle \lim_{z\to \zeta}f(z)=\zeta, $ where $\angle \lim$ denotes the non-tangential limit, and if $$\lambda\coloneqq \liminf_{z\to \zeta} \frac{1-|f(z)|}{1-|z|}<+\infty.$$  The number $\lambda\in (0,+\infty)$ is called the {\sl dilation} of $f$ at $\zeta$. The point $\zeta$ is {\sl repelling} if $\lambda>1$.
The classical Denjoy--Wolff theorem states that if $f$ admits no fixed point $z\in \D$, then 
there exists a boundary regular fixed point $p\in \partial \D$ with dilation $\lambda \leq 1$ such that $(f^n)$ converges to the constant map $p$ uniformly on compact subsets.
The self-map $f$ is called {\sl hyperbolic} if $\lambda<1$.
We denote by $\H\subset \C$ the upper half-plane. 

We are interested in the following  examples of semi-models and pre-models in $\D$, given respectively by Valiron \cite{Va} and by Poggi-Corradini \cite{PC1}. Both examples can be seen as the solution of a generalized Schr\"oder equation at the boundary of the disc.

\begin{theorem}[Valiron]\label{blu}
Let $f\colon \D\to \D$ be a hyperbolic holomorphic self-map with dilation $\lambda<1$ at its Denjoy--Wolff point. Then there exists a model 
$(\H,h, z\mapsto \frac{1}{\lambda}z)$ for $f$.
\end{theorem}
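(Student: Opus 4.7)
The plan is to conjugate the problem to the upper half-plane via a Cayley transform and realize the intertwining map as a renormalized limit of iterates. Let $T\colon \D\to \H$ be a Cayley map sending the Denjoy--Wolff point $p\in \partial \D$ to $\infty$, and set $\tilde f\coloneqq T\circ f\circ T^{-1}\in \Hol(\H,\H)$. Hyperbolicity with dilation $\lambda<1$ translates, in these coordinates, to the angular condition $\angle\lim_{z\to \infty}\tilde f(z)/z=1/\lambda>1$, so near $\infty$ the iterates $\tilde f^n$ escape at the geometric rate $\lambda^{-n}$. Constructing a model for $\tilde f$ with base automorphism $z\mapsto z/\lambda$ is then equivalent to the statement.

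The candidates for the intertwining map are $h_n(z)\coloneqq \lambda^n \tilde f^n(z)$, each of which lies in $\Hol(\H,\H)$ since $\lambda>0$. This is a normal family by Montel's theorem, so the main analytic task is to show that every subsequential limit is the same nondegenerate map. I would use Julia's lemma at $\infty$ to produce an invariant horodisc $A=\{\Im z>R\}$ on which $\tilde f$ is quantitatively controlled by the affine dilation $z\mapsto z/\lambda$, and then transfer this control to all of $\H$ via the Schwarz--Pick contraction of the hyperbolic metric $\omega_\H$ under $\tilde f$. With a suitable normalization this yields uniform convergence on compacta to a holomorphic $h\colon \H\to \overline{\H}$; the limit is nonconstant because its behaviour along the imaginary axis is pinned down by the horodisc estimate, and hence $h(\H)\subset \H$ by the open mapping principle. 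The intertwining relation then follows from
\[
h(\tilde f(z))=\lim_{n\to\infty}\lambda^n\tilde f^{n+1}(z)=\frac{1}{\lambda}\lim_{n\to\infty}\lambda^{n+1}\tilde f^{n+1}(z)=\frac{1}{\lambda}h(z).
\]

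To upgrade this semi-model to a model I would verify that $h$ is univalent on the absorbing horodisc $A$: on such a horodisc $\tilde f$ is eventually injective, so each $h_n|_A$ is univalent, and Hurwitz's theorem together with the nonconstancy of $h$ forces $h|_A$ to be univalent. The covering condition $\H=\bigcup_{n\geq 0}\lambda^n h(\H)$ follows because $h(A)$ contains a horodisc at $\infty$, and the backward iterates of $z\mapsto z/\lambda$ sweep any compact subset of $\H$ into such a horodisc. The main obstacle in the whole scheme is the convergence of $(h_n)$: normality is automatic, but extracting a unique, nondegenerate limit requires the precise angular-derivative information at the Denjoy--Wolff point (Julia's lemma and its Wolff refinement), without which the limit could collapse to a real constant or fail to be injective. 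Once this analytic input is in place, the intertwining, univalence on an absorbing domain, and covering properties are routine consequences of standard tools in geometric function theory.
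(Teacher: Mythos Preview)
Your approach is the classical analytic route, essentially Valiron's own, and it differs substantially from what the paper does. The paper does not prove Theorem~\ref{blu} directly (it is cited as Valiron's result), but its general machinery---Theorem~\ref{principaleforward} specialized to $X=\D$, cf.\ Theorem~\ref{fin1} with $q=1$---recovers the semi-model by a different mechanism: rather than forming the renormalized limit $\lambda^n\tilde f^n$ and appealing to Julia--Carath\'eodory estimates, one conjugates the iterates by automorphisms of $\D$ to force a relatively compact orbit, extracts a holomorphic retract $Z\subset\D$ from subsequential limits, and reads off the intertwining map and the automorphism $\tau$ from the universal property of a direct limit. In one variable the retract $Z$ is forced to be all of $\D\cong\H$, and the divergence-rate identity $c(\tau)=c(f)=-\log\lambda$ pins down $\tau$ as $z\mapsto z/\lambda$. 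Your route yields explicit asymptotic control of $h$; the paper's route is categorical and transfers verbatim to cocompact Kobayashi hyperbolic manifolds of any dimension.

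There is, however, a gap in your upgrade from semi-model to model. You assert that on an invariant horodisc $A$ the map $\tilde f$ is ``eventually injective, so each $h_n|_A$ is univalent,'' and then invoke Hurwitz. But nothing in the hypotheses forces $\tilde f$ to be injective on any horodisc at the Denjoy--Wolff point: the angular-derivative condition controls radial growth, not injectivity, and a general hyperbolic self-map of $\D$ need not be univalent on any neighbourhood of its boundary fixed point. Hence the $h_n$ need not be univalent on $A$, and the Hurwitz argument does not apply as written. The conclusion that $h$ is univalent on an $f$-absorbing domain is correct, but the standard argument (Cowen~\cite{Cowen}) proceeds by realizing the abstract model as a direct limit of copies of $\D$ and uniformizing the resulting simply connected Riemann surface---precisely the mechanism behind the paper's Theorem~\ref{AF} in the univalent case. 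Note too that for non-univalent $f$ the paper's Theorem~\ref{principaleforward} only claims a canonical Kobayashi hyperbolic \emph{semi}-model, so even within the paper's framework the full ``model'' assertion requires the additional input from~\cite{Cowen}.
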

\begin{theorem}[Poggi-Corradini]\label{bli}
Let $f\colon \D\to \D$ be a  holomorphic self-map and let $\zeta$ be a boundary repelling fixed point with dilation $\lambda>1$.
Then there exists a pre-model 
$(\H,h, z\mapsto \frac{1}{\lambda} z)$ for $f$.
\end{theorem}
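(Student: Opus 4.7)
The plan is to construct the intertwining map $h\colon\H\to\D$ by backward iteration along an orbit approaching $\zeta$, following the classical strategy of Poggi-Corradini. First I would move the boundary repelling fixed point to $\infty$ by conjugating with a Cayley-type biholomorphism $\phi\colon\D\to\H$ sending $\zeta\mapsto\infty$, and work with $\tilde f\coloneqq \phi\circ f\circ\phi^{-1}\colon\H\to\H$; the target automorphism of the pre-model is $\v(w)=w/\lambda$. The Julia--Wolff--Carath\'eodory lemma translates the assumption ``dilation $\lambda>1$ at $\zeta$'' into $\angle\lim_{w\to\infty}\tilde f(w)/w=1/\lambda$ and $\angle\lim_{w\to\infty}\tilde f'(w)=1/\lambda$, and in particular produces an inverse branch $g$ of $\tilde f$ on a horocyclic neighborhood of $\infty$ with $g(w)\sim \lambda w$. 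Iterating $g$ on a base point produces a backward orbit $(w_n)\subset\H$ with $\tilde f(w_{n+1})=w_n$, $w_n\to\infty$ non-tangentially, and hyperbolic step $\rho_{\H}(w_n,w_{n+1})\to\log\lambda$; after a further conjugation by a translation one may assume $w_n=iy_n$ with $y_n\to\infty$ and $y_{n+1}/y_n\to\lambda$.

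With this in place, I would take as candidate intertwinings the rescaled iterates
\begin{equation*}
h_n\colon\H\to\D,\qquad h_n(w)\coloneqq f^n\bigl(\phi^{-1}(\sigma_n(w))\bigr),\qquad \sigma_n(w)\coloneqq (y_n/y_0)\,w,
\end{equation*}
so that $h_n(iy_0)=f^n(z_n)=z_0$, where $z_n\coloneqq\phi^{-1}(w_n)$ is the backward orbit in $\D$. A direct computation gives the cochain relation $f\circ h_n=h_{n+1}\circ\psi_n$ with $\psi_n\coloneqq\sigma_{n+1}^{-1}\circ\sigma_n$ the dilation by $y_n/y_{n+1}\to 1/\lambda$, so $\psi_n\to\v$ locally uniformly on $\H$.

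Since $h_n\in\Hol(\H,\D)$, the family is normal; extracting a subsequence $h_{n_k}\to h$ locally uniformly and passing to the limit in $f\circ h_{n_k}=h_{n_k+1}\circ\psi_{n_k}$ yields $f\circ h=h\circ\v$. To rule out that $h$ is a constant in $\overline\D$, it suffices to produce a point $w^*\in\H$ at which $h'(w^*)\neq 0$, and the natural choice is $w^*=iy_0$: one computes
\begin{equation*}
|h_n'(iy_0)|=\bigl|(f^n)'(z_n)\bigr|\cdot|(\phi^{-1})'(w_n)|\cdot (y_n/y_0),
\end{equation*}
in which $|(f^n)'(z_n)|=\prod_{k=1}^{n}|f'(z_k)|\asymp\lambda^n$ by the Julia--Wolff--Carath\'eodory theorem applied to $f'$ at $\zeta$, $|(\phi^{-1})'(w_n)|\asymp y_n^{-2}\asymp\lambda^{-2n}$, and $y_n\asymp\lambda^n$; the three factors combine to give $|h_n'(iy_0)|$ bounded above and below, uniformly in $n$. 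Thus $h$ is non-constant, $h(\H)\subset\D$, and $(\H,h,\v)$ is the required pre-model.

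The main obstacle is the non-degeneracy of the limit, which rests on the simultaneous sharp controls $|(f^n)'(z_n)|\asymp\lambda^n$ and $y_n\asymp\lambda^n$. Both, in turn, depend on the non-tangential convergence of the backward orbit to $\zeta$ and on the Julia--Wolff--Carath\'eodory asymptotics at the repelling boundary fixed point. Producing a backward orbit converging \emph{non-tangentially} to $\zeta$ with the correct hyperbolic step is itself the main external input and is obtained by constructing an invariant horocyclic region near $\zeta$ on which an inverse branch of $f$ is well defined and hyperbolically contracting.
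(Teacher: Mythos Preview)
Your outline follows Poggi-Corradini's original construction and is genuinely different from the route this paper takes. Here Theorem~\ref{bli} falls out (for $q=1$) from Theorem~\ref{fin3}, which rests on the abstract canonical pre-model of Theorem~\ref{principalebackward}: given a backward orbit with bounded step one conjugates by automorphisms so that the orbit becomes relatively compact, extracts normal limits $\alpha_n$ of the maps $\tilde f_{n,m}$ as $m\to\infty$, and shows that a further limit $\alpha$ is a holomorphic retraction onto a submanifold $Z$. Your rescalings $\sigma_n$ and the cochain relation $f\circ h_n=h_{n+1}\circ\psi_n$ are precisely this conjugation, specialized to $\D$; the packaging is different but the normal-families core is the same. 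The real divergence is in the non-degeneracy step. The paper never touches derivatives: it proves the metric identity $\lim_m\alpha_m^*k_X=k_Z$ and the formula $c(\tau)=\lim_m\sigma_m(\beta)/m$, and then Julia's inequality alone gives $\sigma_m(\beta)\ge m\log\lambda$, hence $c(\tau)\ge\log\lambda>0$, forcing the retract $Z$ to be all of $\D$. This Kobayashi-distance argument is what makes the proof carry over unchanged to $\B^q$ and to arbitrary cocompact hyperbolic manifolds; your derivative computation, by contrast, is tied to dimension one.

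On correctness: the step $|(f^n)'(z_n)|=\prod_{k=1}^n|f'(z_k)|\asymp\lambda^n$ (and likewise $y_n\asymp\lambda^n$) is not a consequence of the Julia--Wolff--Carath\'eodory theorem alone. JWC gives $|f'(z_k)|\to\lambda$ and $y_{k+1}/y_k\to\lambda$ along a non-tangential approach, but for the product (resp.\ $y_n/\lambda^n$) to stay bounded away from $0$ and $\infty$ one needs a summable rate of convergence, which JWC does not provide. Equivalently, in hyperbolic terms $|h_n'(iy_0)|_{\rm hyp}=\prod_{k}|f'(z_k)|_{\rm hyp}$ with each factor $\le 1$ and tending to $1$, and the product may well tend to $0$. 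Poggi-Corradini handles this either by extracting the needed rate from the contracting inverse branch near $\zeta$, or by a two-point argument in place of the derivative estimate; the paper's $c(\tau)\ge\log\lambda$ bypasses the issue entirely. As written, your non-degeneracy argument has a gap at exactly this point.
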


A proof of the essential uniqueness of the intertwining mapping in Theorem \ref{blu} was given by Bracci--Poggi-Corradini \cite{BP}, and
Poggi-Corradini  \cite{PC1} proved that the intertwining mapping in Theorem \ref{bli} is essentially unique.

These two results were  generalized to the unit ball $\B^q\subset \C^q$ (for a definition of dilation, hyperbolic self-maps, Denjoy--Wolff point and boundary repelling points in the ball, see Sections \ref{theunitball} and \ref{theunitballback}).  Bracci--Gentili--Poggi-Corradini \cite{BGP} studied the case of a hyperbolic holomorphic self-map  $f\colon \B^q\to \B^q$ with dilation $\lambda<1$ at its Denjoy--Wolff point $p\in \partial \B^q$, and, assuming some regularity at $p$, they proved the existence of
 a one-dimensional semi-model $(\H,h, z\mapsto \frac{1}{\lambda}z)$ for $f$ (for other results about semi-models for hyperbolic self-maps, see \cite{BG,jury,bayart}). 
 
 Ostapyuk \cite{O} studied the case of a holomorphic self-map  $f\colon \B^q\to \B^q$ with a  boundary repelling fixed point $\zeta\in \partial \B^q$ with dilation $\lambda>1$, and,  assuming that $\zeta$  is isolated from other boundary repelling fixed points with dilation less or equal than $\lambda$, she proved  the existence of  a one-dimensional pre-model 
$(\H,h, z\mapsto \frac{1}{\lambda} z)$ for $f$.

Theorems \ref{blu} and \ref{bli} were generalized respectively  by Bracci and the author \cite{ABmod} and by the author \cite{Ar} to the case of  a univalent self-map $f\colon X\to X$ of a cocompact Kobayashi hyperbolic complex manifold (such as the unit ball $\B^q$ or the unit polydisc $\Delta^q$).  The approach used is geometric, much in the spirit of the work of Cowen \cite{Cowen} for the  forward iteration in the unit disc. 

We first consider the forward iteration case.
Let $k$ denote the Kobayashi distance. Notice that if $(z_n\coloneqq f^n(z_0))$ is a forward orbit, then for all fixed $m\geq 1$  the sequence $(k_{X}(z_n,z_{n+m}))_{n\geq 0}$ is non-increasing. The limit $s_m(z_0)\coloneqq \lim_{n\to\infty} k_{X}(z_n,z_{n+m})$ is called the {\sl forward $m$-step} at $z_0$. The {\sl divergence rate} of a self-map is a generalization introduced in  \cite{ABmod} of the dilation  at the Denjoy--Wolff point of a holomorphic self-map of $\B^q$. 
\begin{theorem}[A.--Bracci]\label{AF}
Let $X$ be Kobayashi hyperbolic and cocompact and let $f\colon X\to X$ be a univalent self-map. 
Then there exists an essentially unique  model $(\Omega,\sigma,\psi)$. Moreover, there  exists a holomorphic retract $Z$ of $X$, a  surjective holomorphic submersion $r\colon \Omega\to Z$, and an automorphism $\tau\colon Z\to Z$ with divergence rate   
\begin{equation}\label{cornflakes}
c(\tau)=c(f)=\lim_{m\to \infty}\frac{s_m(x)}{m},\quad x\in X,
\end{equation}
such that $(Z,r\circ \sigma,\tau)$ is a  semi-model for $f$, called a {\sl canonical Kobayashi hyperbolic semi-model}.

Moreover, the semi-model $(Z,r\circ \sigma,\tau)$ satisfies the following universal property. If $(\Lambda, h, \varphi)$ is a  semi-model for $f$ such that $ \Lambda$ is Kobayashi hyperbolic, then there exists a surjective holomorphic mapping $\eta\colon Z\to \Lambda$ such that
 the following diagram commutes:

\SelectTips{xy}{12}
\[ \xymatrix{X \ar[rrr]^h\ar[rrd]^{r\circ \sigma}\ar[dd]^f &&& \Lambda \ar[dd]^\varphi\\
&& Z \ar[ru]^\eta \ar[dd]^(.25)\tau\\
X\ar'[rr]^h[rrr] \ar[rrd]^{r\circ \sigma} &&& \Lambda\\
&& Z \ar[ru]^\eta.}
\]

\end{theorem}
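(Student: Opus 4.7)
The strategy, as signaled in the abstract, is to turn the autonomous system $(f^n)$ into a non-autonomous one with a relatively compact forward orbit by a time-dependent conjugation, and to recover $\Omega$ from the direct limit of the resulting system. Fix a base point $x_0\in X$ with orbit $z_n\coloneqq f^n(x_0)$. Using cocompactness of the action of $\Aut(X)$ on $X$, choose $g_n\in \Aut(X)$ so that the translated orbit $(g_n(z_n))_{n\geq 0}$ lies in a fixed compact set $K\subset X$. Setting $F_n\coloneqq g_{n+1}\circ f\circ g_n^{-1}$ produces a sequence of univalent holomorphic maps $F_n\colon X\to X$ whose non-autonomous orbit starting at $g_0(x_0)$ remains in $K$. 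Since $f$ is univalent and the $g_n$ are automorphisms, each $F_n$ is in fact an open holomorphic embedding by Osgood-type injectivity.

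The plan is then to build $\Omega$ as the direct limit of the system $X\xrightarrow{F_0}X\xrightarrow{F_1}X\xrightarrow{F_2}\cdots$, with canonical maps $\sigma_n\colon X\to \Omega$, and to define $\sigma\coloneqq \sigma_0\circ g_0$. The open-embedding property of the transition maps together with the relative compactness of the orbit in $K$ should allow $\Omega$ to be given the structure of a Hausdorff, second countable complex manifold, realized as an increasing union of open sets each biholomorphic to $X$. The automorphism $\psi\colon\Omega\to\Omega$ is obtained by identifying $\Omega$ with a telescoping colimit in which shifting the indexing level by one is a biholomorphism; the intertwining relation $\psi\circ\sigma=\sigma\circ f$ and the absorbing property $\Omega=\bigcup_{n\geq 0}\psi^{-n}(\sigma(X))$ both follow from the direct-limit construction. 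Essential uniqueness comes from the universal property of $\Omega$ as an initial object among models of $f$.

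For the canonical Kobayashi hyperbolic semi-model, the issue is that $\Omega$ need not itself be Kobayashi hyperbolic: the Kobayashi pseudo-distance on $\Omega$ degenerates along directions in which the quantities $k_X(F_{n+j-1}\circ\cdots\circ F_n(x),F_{n+j-1}\circ\cdots\circ F_n(y))$ fail to be bounded below as $j\to\infty$. Since these quantities are non-increasing in $j$ by the contraction property of $k_X$, they converge to a pseudo-distance on $\Omega$, and the plan is to quotient by the corresponding degeneracy relation. The geometric content is that this quotient can be realized concretely as a holomorphic retract $Z\subset X$ of $X$, extracted as a limit of an appropriate sequence of holomorphic maps built from $f$ and the $g_n$, with the surjective holomorphic submersion $r\colon\Omega\to Z$ being the passage to the quotient. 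The induced map $\tau\colon Z\to Z$ is then an automorphism by construction, and the identity $c(\tau)=c(f)=\lim_m s_m(x)/m$ is checked by evaluating $k_Z$ along the image orbit $(r(\sigma(z_n)))$ and observing that, by definition of the quotient, exactly the residual distance $s_m(x_0)$ survives.

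The universal property is then a formal consequence. Given a semi-model $(\Lambda,h,\varphi)$ with $\Lambda$ Kobayashi hyperbolic, the maps $h_n\coloneqq \varphi^{-n}\circ h\circ g_n^{-1}\colon X\to\Lambda$ are compatible with the direct system (using the semi-conjugacy $h\circ f=\varphi\circ h$), and therefore, by the universal property of the direct limit, induce a holomorphic map $\Omega\to\Lambda$; since $\Lambda$ is Kobayashi hyperbolic this map is distance-decreasing for the degenerate pseudo-distance on $\Omega$, hence factors through $r$ to yield the desired holomorphic $\eta\colon Z\to\Lambda$, whose surjectivity follows from the absorbing condition on $h$. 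The main obstacle is the middle step: endowing $\Omega$ with its complex manifold structure and identifying the Kobayashi hyperbolic quotient with a concrete holomorphic retract of $X$. Both rely crucially on the compactness extracted from the cocompactness hypothesis and on the univalence of $f$, and it is here that the special geometry of cocompact Kobayashi hyperbolic manifolds enters in an essential way.
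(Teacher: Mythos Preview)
Your strategy is correct and matches the approach of \cite{ABmod}, from which this theorem is quoted; note that it is stated here as background in the introduction and is not re-proved in the present paper, which instead establishes the generalization Theorem~\ref{principaleforward} without the univalence hypothesis.

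One difference in emphasis is worth recording. You construct the model $\Omega$ first as a genuine complex manifold---univalence makes the direct limit an increasing union of open embeddings of copies of $X$, hence a complex manifold---and then pass to the Kobayashi-degeneracy quotient $\Omega/_\sim$ to obtain $Z$; this is indeed the route taken in \cite{ABmod}. The present paper, aiming at non-univalent $f$ for which $\Omega$ need not carry a complex structure, bypasses $\Omega$ altogether: after the time-dependent conjugation yielding a relatively compact orbit, it extracts the retract $Z$ and the maps $\alpha_n\colon X\to Z$ \emph{directly} as subsequential limits of the conjugated iterates (Propositions~\ref{x1}--\ref{x3}), and only afterwards identifies $(Z,\alpha_n)$ with the abstract quotient $\Omega/_\sim$ via the bijection $\hat\Psi$ of Remark~\ref{iphone}. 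The automorphism $\tau$ and the universal property are then read off from the universal property of this canonical Kobayashi hyperbolic direct limit, exactly along the lines of your final paragraph. For univalent $f$ both routes succeed; the paper's route is the one that survives when univalence is dropped.
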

In particular, if $X=\B^q$ and $f$ is hyperbolic with dilation  $\lambda<1$ at its Denjoy--Wolff point, then   $Z$ is biholomorphic to a ball $\B^k$ with $1\leq k\leq q$, and the automorphism $\tau$ is hyperbolic with dilation  $\lambda$ at its Denjoy-Wolf point. 
As a corollary Theorem \ref{AF} yields the existence of a semi-model $(\H,\vartheta, z\mapsto \frac{1}{\lambda}z)$ for $f$, hence 
 $\vartheta\colon \B^q\to \H$ is a holomorphic solution of  the Valiron equation 
\begin{equation}\label{valironequation}
\vartheta\circ f=\frac{1}{\lambda}\vartheta.
\end{equation}

Now we recall the backward iteration case.
A {\sl backward orbit} is a sequence $\beta\coloneqq(y_n)$ in $X$ such that $f(y_{n+1})=y_n$ for all $n\geq 0$.
Notice that if $(y_n)$ is a backward orbit, then  for all fixed $m\geq 1$ the sequence $(k_{X}(y_n,y_{n+m}))_{n\geq 0}$ is non-decreasing. The limit $\sigma_m(\beta)\coloneqq \lim_{n\to\infty} k_{X}(y_n,y_{n+m})$   is called the {\sl backward $m$-step} of $\beta$. 
A backward orbit $\beta$ has {\sl bounded step} if $\sigma_1(\beta)<+\infty$.
\begin{theorem}[A.]\label{AB}
Let $X$ be Kobayashi hyperbolic and cocompact and let $f\colon X\to X$ be a univalent self-map.  Let $\beta\coloneqq(y_n)$ be a backward orbit for $f$ with bounded step.
Then there exists a holomorphic retract $Z$ of $X$, an injective  holomorphic immersion $\ell\colon Z\to X$, and an automorphism $\tau\colon Z\to Z$ with divergence rate   
\begin{equation}\label{cornflakes2}
c(\tau)=\lim_{m\to \infty}\frac{\sigma_m(\beta)}{m},
\end{equation}
such that $(Z,\ell,\tau)$ is a pre-model for $f$, called a {\sl canonical pre-model associated with $[y_n]$}.

Moreover $(Z,\ell,\tau)$ satisfies the following universal property.
If $(\Lambda,h,\v)$ is a pre-model for $f$ such that for some (and hence for any) $w\in \Lambda$, the non-decreasing sequence
$(k_X(h(\v^{-n}(w)),y_n))_{n\geq 0}$ is bounded, then 
there exists an injective holomorphic mapping $\eta\colon \Lambda\to Z$ such that
 the following diagram commutes:
\SelectTips{xy}{12}
\[ \xymatrix{\Lambda\ar[rrr]^{h}\ar[rd]^\eta\ar[dd]^{\v} &&& X \ar[dd]^f\\
& Z \ar[rru]^\ell \ar[dd]^(.25)\tau
\\
\Lambda\ar'[r][rrr]^(.25){h} \ar[rd]^\eta &&& X\\
& Z \ar[rru]^\ell.}
\]

\end{theorem}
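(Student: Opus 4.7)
Following the strategy indicated in the abstract, the plan is to replace $\beta$ by a relatively compact backward orbit in a non-autonomous system obtained by time-dependent conjugation, and then to build the pre-model from an appropriate subset of the resulting inverse limit. Set $w := y_0$, and using cocompactness of $X$ pick $T_n \in \Aut(X)$ with $T_0 = \id$ and $T_n(y_n) = w$. Then $g_n := T_n \circ f \circ T_{n+1}^{-1}$ satisfies $g_n(w) = w$, so $(w,w,\ldots)$ is a constant (hence relatively compact) backward orbit for the non-autonomous system $(g_n)$. The bounded-step hypothesis gives $k_X(w, T_n T_{n+1}^{-1}(w)) \leq \sigma_1(\beta)$, which together with cocompactness makes $\{T_n T_{n+1}^{-1}\}_n$ relatively compact in $\Aut(X)$ and $(g_n)$ a normal family.

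\textbf{Candidate pre-model.} Let
$$\widehat{X} := \{(x_n) \in X^\N : g_n(x_{n+1}) = x_n \text{ for all } n\}.$$
The substitution $p_n := T_n^{-1}(x_n)$ identifies $\widehat{X}$ with the set of all backward orbits of $f$. Define
$$Z := \{(p_n) : \sup_n k_X(y_n, p_n) < \infty\},$$
which corresponds to sequences $(x_n)$ relatively compact in $X$ and contains $\beta$ itself. Set $\ell((p_n)) := p_0$, injective by univalence of $f$, and $\tau((p_n)) := (f(p_0), p_0, p_1, \ldots)$, with inverse $\tau^{-1}((q_n)) = (q_1, q_2, \ldots)$. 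Both shifts preserve the bounded-distance condition by the triangle inequality and $\sigma_1(\beta) < \infty$, and the relation $f \circ \ell = \ell \circ \tau$ is immediate from the definitions.

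\textbf{Complex structure and universal property.} The main obstacle is equipping $Z$ with a complex structure making $\ell$ an injective holomorphic immersion, $\ell(Z)$ a holomorphic retract of $X$, and $\tau$ biholomorphic. The plan is to extract a holomorphic retraction $\rho \colon X \to \ell(Z)$ as a cluster value of the normal family of forward compositions $G_n := g_0 \circ \cdots \circ g_{n-1} = f^n \circ T_n^{-1}$ (which are Kobayashi non-expansive and satisfy $G_n(w) = w$). For any cluster $\rho = \lim_k G_{n_k}$ and any $x \in X$, a diagonal-subsequence argument yields a backward orbit of $f$ starting at $\rho(x)$ and at bounded Kobayashi distance from $\beta$, showing $\rho(X) \subseteq \ell(Z)$; the delicate point is to choose the subsequence so that $\rho$ is idempotent (and hence a retraction onto $\ell(Z)$), which requires a two-parameter diagonal extraction exploiting the relative compactness of $\{T_n T_{n+1}^{-1}\}_n$ and is essentially the inverse-limit analogue of the Cartan-type argument underlying Theorem~\ref{AF}. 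Granted the retract structure, $Z$ inherits the complex manifold structure from $\ell(Z)$, the isometric behavior of $\tau$ on bounded backward orbits yields the divergence rate identity \eqref{cornflakes2}, and the universal property follows by setting $\eta(z) := (h(\v^{-n}(z)))_{n \geq 0}$: this is a backward orbit of $f$ (by $f \circ h = h \circ \v$), it lies in $Z$ (by the bounded-distance hypothesis), it intertwines $\v$ with $\tau$ by a direct computation, and it satisfies $\ell \circ \eta = h$, from which both holomorphicity and injectivity of $\eta$ follow.
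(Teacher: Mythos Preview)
Your overall strategy---time-dependent conjugation to a non-autonomous system with a constant backward orbit, then extraction of a retract via normal-family limits---is exactly the paper's. But there is a genuine conceptual gap in how you identify the retract.

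You take $\rho$ to be a cluster point of $G_n=\tilde f_{0,n}$ and aim to show that $\rho$ is a retraction onto $\ell(Z)$. In the paper's framework this $\rho$ is the map $\tilde\alpha_0$, i.e.\ the limit of the \emph{first} row $\tilde f_{0,m}$ of the conjugated system. That map is \emph{not} the retraction, and $\ell(Z)=\alpha_0(Z)=V_0([y_n])$ is in general only an injectively immersed submanifold biholomorphic to a retract, not itself a retract of $X$. The paper obtains the retraction by a genuinely two-step procedure: first extract, for every $n$, a limit $\tilde\alpha_n$ of $\tilde f_{n,m}$ (diagonalising over $n$), and \emph{then} extract a further limit $\alpha$ of the family $(\tilde\alpha_{m_h})$. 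The identity $\tilde f_{n,m_h}\circ\tilde\alpha_{m_h}=\tilde\alpha_n$ yields $\tilde\alpha_n\circ\alpha=\tilde\alpha_n$, and specialising to $n=m_h$ and letting $h\to\infty$ gives $\alpha^2=\alpha$. The retract is $Z=\alpha(X)$, and the intertwining map is $\ell=\tilde\alpha_0|_Z$, a separate object. Your ``two-parameter diagonal extraction'' gestures at this but is aimed at the wrong target: no choice of subsequence will make a single limit of $G_n$ idempotent with image $\ell(Z)$ in general, because $\ell(Z)$ and $Z$ are different subsets of $X$.

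A smaller gap: the injectivity of $\eta$ does not follow from $\ell\circ\eta=h$ alone, since $h$ need not be injective. What you can say is that $\eta(z_1)=\eta(z_2)$ forces $h(\varphi^{-n}(z_1))=h(\varphi^{-n}(z_2))$ for all $n$; turning this into $z_1=z_2$ requires an additional argument (in the paper's generalisation, Theorem~\ref{principalebackward}, injectivity of $\eta$ is in fact not claimed).
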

In particular, if $X=\B^q$ and the backward orbit $(y_n)$ converges to a boundary repelling fixed point $\zeta\in \partial \B^q$ with dilation $\lambda>1$, then $Z$ is biholomorphic to a ball $\B^k$ with $1\leq k\leq q$, and the automorphism $\tau$ is hyperbolic with dilation  $\mu\geq \lambda$ at its unique boundary repelling fixed point.

 In this paper we generalize Theorems \ref{AF} and \ref{AB} to non-necessarily univalent holomorphic self-maps $f\colon X\to X$, and then we apply our results to the case of the unit ball $\B^q$. Our proofs underline the strong duality between the forward case and the backward case.
 
In the first part of the paper  we prove Theorem \ref{principaleforward}, which generalizes Theorem  \ref{AF}.
Let $(\Omega,\Lambda_n\colon X\to \Omega)$ be the direct limit of the sequence $(f^n\colon X\to X)$.
Consider the equivalence relation $\sim$ on $\Omega$, where $[(x,n)],[(y,u)]\in \Omega$ are equivalent by $\sim$ if and only if  $$k_X(f^{m-n}(x),f^{m-u}(y))\stackrel{m\rightarrow\infty}\longrightarrow 0.$$
The bijective self-map $\psi\colon \Omega\to \Omega$ defined by $[(x,n)]\mapsto [(f(x),n)]$ satisfies $\psi\circ \Lambda_0=\Lambda_0\circ f$ and passes to the quotient inducing a bijective self-map
$\hat\psi \colon \Omega/_\sim\to  \Omega/_\sim$ satisfying 
$$\xymatrix{X\ar[r]^{f}\ar[d]_{\hat\Lambda_0}& X\ar[d]^{\hat\Lambda_0}\\
\Omega/_\sim\ar[r]^{\hat\psi}& \Omega/_\sim,}$$ 
where $\hat \Lambda_0\coloneqq \pi_\sim\circ \Lambda_0$.
A natural candidate for a canonical Kobayashi hyperbolic semi-model for $f$ would be the triple  $(\Omega/_\sim,\Lambda_0,\hat\psi)$.
Indeed, by the universal property of the direct limit, if $(\Lambda, h, \varphi)$ is a  semi-model for $f$ such that $ \Lambda$ is Kobayashi hyperbolic, then there exists a  mapping $\eta\colon \Omega/_\sim\to \Lambda$ which makes the following diagram commute:
\SelectTips{xy}{12}
\[ \xymatrix{X \ar[rrr]^h\ar[rrd]^{\hat\Lambda_0}\ar[dd]^f &&& \Lambda \ar[dd]^\varphi\\
&&  \Omega/_\sim \ar[ru]^\eta \ar[dd]^(.25){\hat\psi}\\
X\ar'[rr]^h[rrr] \ar[rrd]^{\hat\Lambda_0} &&& \Lambda\\
&&  \Omega/_\sim \ar[ru]^\eta.}
\]

We have to show that $\Omega/_\sim$ can be endowed with  a suitable complex structure.
If $f$ is univalent, then it follows from the proof of Theorem \ref{AF} that the direct limit $\Omega$ admits a natural complex structure which passes to the quotient to a complex structure on  $\Omega/_\sim$ 
 (see \cite{ABmod}). The  problem in the non-univalent case is that $\Omega$ may not admit a natural complex structure. 
 Rather surprisingly, even if $\Omega$ does not, the quotient set $\Omega/_\sim$ can always  be endowed with a complex structure which makes it biholomorphic to a holomorphic retract of $X$. We prove this by conjugating $(f^n)$ to a non-autonomous holomorphic forward dynamical system $(\tilde f_{n,m}\colon X\to X)_{m\geq n\geq 0}$ which admits a relatively compact forward orbit. This orbit is used to prove the existence of a 
 holomorphic retract $Z$ of $X$ and a family of holomorphic mappings $(\alpha_n\colon X\to Z)$ satisfying $$\alpha_m\circ \tilde f_{n,m}=\alpha_n,\quad \forall \ 0\leq n\leq m.$$
By the universal property of the direct limit there exists a mapping $\Phi\colon \Omega \to Z$ which induces a bijection $\hat\Phi\colon \Omega/_\sim\to Z$, which pulls back the desired complex structure to $\Omega/_\sim$.
Formula (\ref{cornflakes}) for the divergence rate of $\tau$ is a consequence of the fact that the  Kobayashi distance on $\Omega/_\sim$ admits a description in terms of the forward iteration of $f$.

In the second part of the paper, we consider the backward iteration of $f\colon X\to X$ and we prove Theorem \ref{principalebackward}, which generalizes Theorem  \ref{AB}.
Let $(\Theta,V_n\colon \Theta\to X)$ be the inverse limit  of the sequence $(f^n\colon X\to X)$.
Let $(y_n)$ be a backward orbit with bounded step and let $[y_n]\subset \Theta$ be the subset consisting  of the backward orbits $(z_n)\in \Theta$ such that the  non-decreasing sequence $(k_X(z_n,y_n))_{n\geq 0}$ is bounded. The bijective self-map   $\psi\colon \Theta\to \Theta$ defined by $(z_0, z_1, z_2,\dots)\mapsto [(f(z_0),z_0,z_1,\dots)]$ satisfies $\psi([y_n])=[y_n]$, and the following diagram commutes:
$$\xymatrix{[y_n]\ar[r]^{\psi|_{[y_n]}}\ar[d]_{V_0}& [y_n]\ar[d]^{V_0}\\
X\ar[r]^{f}& X.}$$

 A natural candidate for a canonical pre-model for $f$ associated with $[y_n]$  would be the triple $([y_n], V_0, \psi|_{[y_n]})$. Indeed, by the universal property of the inverse limit, if $(\Lambda, h, \varphi)$ is a  pre-model for $f$ such that for some (and hence for any) $w\in \Lambda$ the non-decreasing sequence
$(k_X(h(\v^{-n}(w)),y_n))_{n\geq 0}$ is bounded, then
  there exists a  mapping $\eta\colon \Lambda\to [y_n]$ which makes the following diagram commute:
  \SelectTips{xy}{12}
\[ \xymatrix{\Lambda\ar[rrr]^{h}\ar[rd]^\eta\ar[dd]^{\v} &&& X \ar[dd]^f\\
& [y_n] \ar[rru]^{V_0} \ar[dd]^(.25){\psi|_{[y_n]}}
\\
\Lambda\ar'[r][rrr]^(.25){h} \ar[rd]^\eta &&& X\\
& [y_n] \ar[rru]^{V_0}.}
\]

We have to show that $[y_n]$ can be endowed with  a suitable complex structure.
If $f$ is univalent, then $V_0\colon \Theta\to X$ is injective, and it follows from the proof of Theorem \ref{AB} that the image $V_0([y_n])$ is an injectively immersed complex submanifold of $X$ which is biholomorphic to a holomorphic retract of $X$.
In the non-univalent case the mapping $V_0\colon \Theta\to X$ is no longer injective, but the subset $[y_n]$ can however be endowed with a natural complex structure which makes it biholomorphic to a holomorphic retract of $X$.
We prove this by conjugating $(f^n)$ to a non-autonomous holomorphic backward dynamical system $(\tilde f_{n,m}\colon X\to X)_{m\geq n\geq 0}$ which admits a relatively compact  backward orbit. This orbit is used to prove the existence of a
 holomorphic retract $Z$ of $X$ and a family of holomorphic mappings $(\alpha_n\colon Z\to X)$ satisfying  $$\tilde f_{n,m}\circ \alpha_m=\alpha_n,\quad \forall \ 0\leq n\leq m.$$
 By the universal property of the inverse limit there exists an injective mapping $\Phi\colon   Z\to \Theta$, which pushes forward   the desired complex structure to its image  $\Phi(Z)=[y_n]$.
Formula (\ref{cornflakes2}) for the divergence rate of $\tau$ is a consequence of the fact that the  Kobayashi distance of $[y_n]$ admits a description in terms of the backward iteration of $f$.

\part{Forward iteration}
\section{Preliminaries}
\begin{definition}
Let $X$ be a complex manifold.
We call   {\sl forward (non-autonomous) holomorphic dynamical system} on $X$ any
family $(f_{n,m}\colon X\to X)_{m\geq n\geq 0}$ of holomorphic self-maps such that for all $m\geq u\geq  n\geq 0$, we have
$$f_{u,m}\circ f_{n,u}= f_{n,m}.$$ For all $n\geq 0$ we denote $f_{n,n+1}$ also by $f_n$.
A forward holomorphic dynamical system $(f_{n,m}\colon X\to X)_{m\geq n\geq 0}$ is called {\sl autonomous}
 if $f_{n}=f_{0}$ for all $n\geq 0$. Clearly in this case $f_{n,m}=f_0^{m-n}.$
\end{definition}
\begin{remark}
Any  family of holomorphic self-maps $(f_n\colon X\to X)_{n\geq 0}$ determines a   forward holomorphic dynamical system $(f_{n,m}\colon X\to X)$ in the following way: for all $n\geq 0$, set $f_{n,n}=\id$, and for all $m> n\geq 0$, set $$f_{n,m}=f_{m-1}\circ \cdots\circ f_n.$$ 
\end{remark}

\begin{definition}
Let $X$ be a complex manifold, and let $(f_{n,m}\colon X\to X)$ be a forward holomorphic dynamical system.
A {\sl direct limit} for $(f_{n,m})$ is a pair $(\Omega,\Lambda_n)$ where $\Omega$ is a set and
$(\Lambda_n\colon X\to \Omega)_{n\geq 0}$ is a family of  mappings such that 
 $$\Lambda_m\circ f_{n,m}=\Lambda_n,\quad \forall \ m\geq n\geq 0,$$
 satisfying the following universal property:
if $Q$ is a set and if $(g_n\colon X\to Q)$ is a family of  mappings satisfying
$$g_m\circ f_{n,m}=g_n,\quad \forall \ m\geq n\geq 0,$$
then there exists a  unique  mapping $\Gamma\colon \Omega\to Q$ such that
$$g_n=\Gamma\circ \Lambda_n,\quad \forall\ n\geq0.$$

\end{definition}
\begin{remark}
The direct limit is essentially unique, in the following sense.
Let $(\Omega,\Lambda_n)$ and $(Q, g_n)$ be two direct limits  for $(f_{n,m})$. Then there exists a bijective mapping 
$\Gamma\colon \Omega\to Q$ such that $$g_n=\Gamma\circ \Lambda_n,\quad \forall\ n\geq0.$$
\end{remark}
\begin{remark}
A direct limit for $(f_{n,m})$ is easily constructed.
We define an equivalence relation on the set $X\times \N$ in the following way: $(x,n)\simeq (y,m)$ if and only if
there exists $u\geq {\rm max}\{n,m\}$ such that $f_{n,u}(x)=f_{m,u}(y)$. We denote the equivalence class of $(x,n)$ by $[(x,n)]$, and we set $\Omega\coloneqq X\times \N/_{\simeq}$. We define a family of mappings $(\Lambda_n\colon X\to \Omega)_{n\geq 0}$ in the following way: for all $x\in X$ and $n\geq 0$, set $\Lambda_n(x)=[(x,n)]$. It is easy to see that $(\Omega,\Lambda_n)$ is a direct limit for $(f_{n,m})$.
\end{remark}

\begin{definition}
In what follows we will need  the following equivalence relation on $\Omega$:  $$[(x,n)]\sim [(y,u)]\quad\mbox{iff}\quad k_X(f_{n,m}(x),f_{u,m}(y))\stackrel{m\rightarrow\infty}\longrightarrow 0.$$ It is easy to see that this is well-defined.
We denote by $\pi_{\sim}\colon \Omega\to  \Omega/_\sim$ the projection to the quotient.
\end{definition}

We now introduce a modified version of the direct limit for $(f_{n,m})$  which is more suited for our needs.

\begin{definition}
Let $X$ be a complex manifold and let $(f_{n,m}\colon X\to X)$ be a forward  holomorphic dynamical system.
We call {\sl canonical Kobayashi hyperbolic direct limit} for $(f_{n,m})$ a pair $(Z,\alpha_n)$ where $Z$ is a Kobayashi hyperbolic  complex manifold and
$(\alpha_n\colon X\to Z)_{n\geq 0}$ is a family of holomorphic mappings such that 
 $$\alpha_m\circ f_{n,m}=\alpha_n,\quad \forall \ m\geq n\geq 0,$$
 which satisfies the following universal property:
if $Q$ is a Kobayashi hyperbolic complex manifold and if $(g_n\colon X\to Q)$ is a family of holomorphic mappings satisfying
$$g_m\circ f_{n,m}=g_n,\quad \forall \ m\geq n\geq 0,$$
then there exists a  unique holomorphic mapping $\Gamma\colon Z\to Q$ such that
$$g_n=\Gamma\circ \alpha_n,\quad \forall\ n\geq0.$$
\end{definition}

\begin{proposition}
The canonical Kobayashi hyperbolic direct limit is essentially unique, in the following sense.
Let $(Z,\alpha_n)$ and $(Q, g_n)$ be two canonical Kobayashi hyperbolic direct limits  for $(f_{n,m})$. Then there exists a biholomorphism 
$\Gamma\colon Z\to Q$ such that $$g_n=\Gamma\circ \alpha_n,\quad \forall\ n\geq0.$$

\end{proposition}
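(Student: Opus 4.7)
The plan is to use the standard categorical argument that any two objects satisfying the same universal property are canonically isomorphic, in this case via biholomorphisms because both target spaces are Kobayashi hyperbolic complex manifolds.

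First I would apply the universal property of $(Z,\alpha_n)$ to the family $(g_n\colon X\to Q)_{n\geq 0}$, which is legitimate because $Q$ is assumed Kobayashi hyperbolic and the $g_n$ satisfy $g_m\circ f_{n,m}=g_n$. This yields a unique holomorphic mapping $\Gamma\colon Z\to Q$ with $g_n=\Gamma\circ \alpha_n$ for all $n\geq 0$. Symmetrically, applying the universal property of $(Q,g_n)$ to the family $(\alpha_n\colon X\to Z)_{n\geq 0}$ produces a unique holomorphic mapping $\Delta\colon Q\to Z$ with $\alpha_n=\Delta\circ g_n$ for all $n\geq 0$.

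Next I would show $\Gamma$ and $\Delta$ are inverse to each other. The composition $\Delta\circ\Gamma\colon Z\to Z$ is holomorphic and satisfies
$$(\Delta\circ\Gamma)\circ \alpha_n=\Delta\circ g_n=\alpha_n,\quad \forall\ n\geq 0.$$
But the identity $\id_Z\colon Z\to Z$ also trivially satisfies $\id_Z\circ\, \alpha_n=\alpha_n$ for all $n\geq 0$. Applying the uniqueness clause in the universal property of $(Z,\alpha_n)$ to the family $(\alpha_n\colon X\to Z)$ itself (viewed as a compatible family into the Kobayashi hyperbolic manifold $Z$) forces $\Delta\circ\Gamma=\id_Z$. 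Exchanging the roles of $Z$ and $Q$ gives $\Gamma\circ\Delta=\id_Q$.

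Hence $\Gamma\colon Z\to Q$ is a holomorphic bijection with holomorphic inverse $\Delta$, so it is a biholomorphism, and by construction $g_n=\Gamma\circ \alpha_n$ for all $n\geq 0$. I do not foresee a real obstacle: the only point that requires some care is remembering that the uniqueness part of the universal property is what pins down $\Delta\circ\Gamma=\id_Z$, and this requires both ambient manifolds $Z$ and $Q$ in play to be Kobayashi hyperbolic, which they are by hypothesis.
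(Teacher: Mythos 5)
Your proof is correct and follows essentially the same argument as the paper: both apply the universal property in each direction to produce $\Gamma$ and its candidate inverse (which the paper calls $\Xi$), and then invoke the uniqueness clause of the universal property to conclude that the two compositions are the respective identities.
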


\begin{proof}
There exist holomorphic mappings $\Gamma\colon Z\to Q$ and $\Xi\colon Q\to Z$ such that for all $n\geq 0$, we have
$g_n=\Gamma\circ \alpha_n$ and $\alpha_n=\Xi\circ g_n$.
Thus the holomorphic mapping $\Xi\circ\Gamma\colon Z\to Z$ satisfies $$\Xi\circ\Gamma\circ\alpha_n=\alpha_n,\quad \forall\ n\geq0,$$
By the universal property of the canonical Kobayashi hyperbolic direct limit, this implies that $\Xi\circ\Gamma=\id_Z$. Similarly, we obtain $\Gamma\circ\Xi=\id_Q$.
\end{proof}

\section{Non-autonomous iteration}

Let $X$ be a taut complex manifold.
Let $(f_{n,m}\colon X\to X)_{m\geq n\geq 0}$ be a forward  holomorphic dynamical system, and assume that it admits 
a relatively compact forward orbit $(f_{0,m}(x_0))_{m\geq 0}$.

\begin{remark}\label{anche}
Let $K\subset X$ be a compact subset such that
$\{f_{0,m}(x_0)\}_{m\geq  0}\subset  K.$
It follows that,  for all fixed $n\geq 0$,  
\begin{equation}\label{converge}
f_{n,m}(K)\cap K\neq \varnothing\quad \forall\, m\geq  n.
\end{equation}
\end{remark}

The sequence of holomorphic self-maps $(f_{0,m}\colon X\to X)_{m\geq 0}$ is not compactly divergent by (\ref{converge}),  and since $X$ is taut, there exists a subsequence $(f_{0,m_{k_0}})_{k_0\geq 0}$ converging uniformly on compact subsets to a holomorphic self-map
$\alpha_0\colon X\to X$. The sequence of holomorphic self-maps $(f_{1,m_{k_0}}\colon X\to X)_{k_0\geq 0}$
is not compactly divergent  by (\ref{converge}), and since $X$ is taut, there exists a subsequence $(f_{1,m_{k_1}})_{k_1\geq 0}$ converging to a holomorphic self-map $\alpha_1\colon X\to X$. Iterating this  procedure we obtain a family of holomorphic self-maps
$(\alpha_n\colon X\to X)$ satisfying for all $m\geq n\geq 0$,
\begin{equation}\label{hermione}
\alpha_m\circ f_{n,m}=\alpha_n.
\end{equation}
Notice that for all $n\geq 0$ we have
\begin{equation}\label{hagrid}
\alpha_n(K)\cap K\neq \varnothing.
\end{equation}
Let now $(m_k)_{k\geq 0}$ be a sequence of integers which for all $j\geq 0$ is eventually a subsequence of $(m_{k_j})_{k_j\geq 0}$ (such a sequence exists by a classical diagonal argument).

The sequence of holomorphic self-maps $(\alpha_{m_k}\colon X\to X)_{k\geq 0}$
 is not compactly divergent by (\ref{hagrid}), and since  $X$ is taut, there exists a subsequence $(\alpha_{m_h})_{h\geq 0}$
converging uniformly on compact subsets to a holomorphic self-map $\alpha\colon X\to X$.

\begin{proposition}\label{x1}
The holomorphic self-map $\alpha\colon X\to X$ is a holomorphic retraction, and for all $n\geq 0$,
\begin{equation}\label{potter}
\alpha\circ \alpha_n=\alpha_n.
\end{equation}
\end{proposition}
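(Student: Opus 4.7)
The plan is to first establish the identity~(\ref{potter}) for each fixed $n$ by passing to the limit along the diagonal sequence, and then to deduce the retraction property $\alpha\circ \alpha = \alpha$ by applying (\ref{potter}) with $n=m_h$ and letting $h\to\infty$.

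First I would record the key property of the diagonal sequence $(m_k)$: since it is eventually a subsequence of each $(m_{k_j})$, we have, for every fixed $n\geq 0$,
$$f_{n,m_k}\stackrel{k\to\infty}\longrightarrow \alpha_n$$
uniformly on compact subsets of $X$, and this convergence persists along the further subsequence $(m_h)$ chosen in the construction of $\alpha$. Fixing $n\geq 0$ and applying~(\ref{hermione}) at $m=m_h$ (valid whenever $m_h\geq n$) yields $\alpha_{m_h}\circ f_{n,m_h}=\alpha_n$. I would then pass to the limit $h\to\infty$, using $\alpha_{m_h}\to \alpha$ on one side and $f_{n,m_h}\to \alpha_n$ on the other, to obtain~(\ref{potter}).

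To conclude, I would apply the identity just proved with $n$ replaced by $m_h$: this gives $\alpha\circ \alpha_{m_h}=\alpha_{m_h}$ for every $h$, and letting $h\to\infty$ produces $\alpha\circ \alpha = \alpha$. The only substantive point to verify is the joint continuity of composition under uniform convergence on compact subsets of a taut manifold, namely that if holomorphic self-maps $g_k\to g$ and $h_k\to h$ uniformly on compact subsets then $g_k\circ h_k\to g\circ h$ uniformly on compact subsets. This is standard: for any compact $L\subset X$ the images $h_k(L)$ are eventually contained in a common compact set (since the $h_k$ form a normal family), reducing the problem to uniform convergence of $g_k$ on that single compact set, which holds by assumption. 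I expect this continuity-of-composition step to be the only mildly delicate part of the argument; everything else is a clean two-step passage to the limit.
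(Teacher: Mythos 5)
Your proof is correct and follows essentially the same two-step argument as the paper: first derive \eqref{potter} by passing to the limit $h\to\infty$ in the identity $\alpha_{m_h}\circ f_{n,m_h}=\alpha_n$, then specialize \eqref{potter} to $n=m_h$ and let $h\to\infty$ again to obtain $\alpha\circ\alpha=\alpha$. Your explicit remark on the continuity of composition under locally uniform convergence is a standard point that the paper leaves implicit.
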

\begin{proof}
Fix $n\geq 0$ and $x\in X$. Then for all $h\geq 0$ such that  $m_h\geq n$, we have 
$$\alpha_n(x)=\alpha_{m_h}( f_{n,m_h}(x))\stackrel{h\rightarrow \infty}\longrightarrow \alpha( \alpha_n(x)).$$
Thus we have, for all $h\geq 0$,  $$\alpha( \alpha_{m_h}(x))=\alpha_{m_h}(x).$$
When $h\rightarrow \infty$, the left-hand side converges to $\alpha(\alpha(x))$, while the right-hand side converges to $\alpha(x)$.
\end{proof}
\begin{remark}
The image $\alpha(X)$ is a closed complex submanifold of $X$ (see e.g. \cite[Lemma 2.1.28]{A}).
\end{remark}
\begin{definition}
We denote $\alpha(X)$ by $Z$.
\end{definition}

\begin{remark}
By (\ref{potter}), we have $\alpha_n(X)\subset Z$ for all $n\geq 0$, and by (\ref{hermione}) we have that
$$\alpha_n(X)\subset \alpha_m(X)$$ for all $m\geq n\geq 0$.
\end{remark}

Let $(\Omega,\Lambda_n)$ be the direct limit of the directed system $(X,f_{n,m})$. By the universal property of the direct limit,
there exists a  mapping $\Psi\colon \Omega\to Z$ such that for all $n\geq 0$, $$\alpha_n=\Psi\circ \Lambda_n.$$
The mapping $\Psi$ is defined in the following way: if $[(x,n)]\in \Omega$, then $\Psi([(x,n)])=\alpha_n(x)$.

\begin{proposition}\label{x2}
The mapping  $\Psi\colon \Omega\to Z$ is surjective, and $\Psi([(x,n)])=\Psi([(y,u)])$ if and only if $[(x,n)]\sim [(y,u)]$.

\end{proposition}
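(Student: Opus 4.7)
First, $\Psi$ is well-defined on $\Omega$: if $(x,n)\simeq (y,u)$, pick $v\geq \max\{n,u\}$ with $f_{n,v}(x)=f_{u,v}(y)$, apply $\alpha_v$, and use $\alpha_v\circ f_{n,v}=\alpha_n$ and $\alpha_v\circ f_{u,v}=\alpha_u$ to conclude $\alpha_n(x)=\alpha_u(y)$. The rest of the argument rests on the pointwise convergence
\[
f_{n,m_h}(x)\stackrel{h\rightarrow\infty}\longrightarrow \alpha_n(x),\qquad \forall\, n\geq 0,\ \forall\, x\in X,
\]
which follows from the diagonal construction: $(m_h)$ is a subsequence of $(m_k)$, and for each fixed $n\geq 0$ the sequence $(m_k)$ is eventually contained in $(m_{k_n})$, along which $f_{n,m_{k_n}}\to \alpha_n$ by construction.

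For the equivalence characterization, the direction $[(x,n)]\sim [(y,u)]\Rightarrow \Psi([(x,n)])=\Psi([(y,u)])$ uses the Kobayashi-contracting property of $\alpha_{m_h}$ together with $\alpha_{m_h}\circ f_{n,m_h}=\alpha_n$ and $\alpha_{m_h}\circ f_{u,m_h}=\alpha_u$:
\[
k_X(\alpha_n(x),\alpha_u(y))=k_X\bigl(\alpha_{m_h}(f_{n,m_h}(x)),\alpha_{m_h}(f_{u,m_h}(y))\bigr)\leq k_X(f_{n,m_h}(x),f_{u,m_h}(y))\stackrel{h\rightarrow\infty}\longrightarrow 0,
\]
whence $\alpha_n(x)=\alpha_u(y)$ since $X$ is taut hence hyperbolic. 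For the converse, suppose $\alpha_n(x)=\alpha_u(y)$. Using the pointwise convergence above together with continuity of $k_X$,
\[
k_X(f_{n,m_h}(x),f_{u,m_h}(y))\stackrel{h\rightarrow\infty}\longrightarrow k_X(\alpha_n(x),\alpha_u(y))=0.
\]
The full sequence $(k_X(f_{n,m}(x),f_{u,m}(y)))_{m\geq \max\{n,u\}}$ is non-increasing in $m$ because $k_X(f_{n,m+1}(x),f_{u,m+1}(y))=k_X(f_m(f_{n,m}(x)),f_m(f_{u,m}(y)))\leq k_X(f_{n,m}(x),f_{u,m}(y))$, so a subsequential limit equal to $0$ forces the whole limit to be $0$, i.e., $[(x,n)]\sim [(y,u)]$.

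I expect the surjectivity of $\Psi$ to be the main obstacle. Given $z=\alpha(w)\in Z$, the points $\alpha_{m_h}(w)$ lie in $\alpha_{m_h}(X)\subset \Psi(\Omega)$ and converge to $z$, yielding only $z\in \overline{\Psi(\Omega)}$. To promote density to actual membership, my plan is to show $\alpha_{m_h}(X)=Z$ for $h$ sufficiently large; the nesting $\alpha_n(X)\subset \alpha_m(X)$ for $m\geq n$, which follows from $\alpha_m\circ f_{n,m}=\alpha_n$, will then propagate the equality to $\alpha_n(X)=Z$ for all sufficiently large $n$, giving surjectivity. The inclusion $\alpha_{m_h}(X)\subset Z$ is given by $\alpha\circ \alpha_{m_h}=\alpha_{m_h}$; for the reverse inclusion I plan to restrict to $Z$, which is itself taut as a holomorphic retract of the taut manifold $X$, and exploit the convergence $\alpha_{m_h}|_Z\to \id_Z$ on $Z$ together with a rigidity argument to conclude that $\alpha_{m_h}|_Z\colon Z\to Z$ is eventually surjective.
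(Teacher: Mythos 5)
Your proof of the equivalence characterization is correct and follows the same route as the paper: contractivity of $\alpha_{m}$ (or $\alpha_{m_h}$) plus $\alpha_m\circ f_{n,m}=\alpha_n$ for the forward direction, and pointwise convergence $f_{n,m_h}(x)\to\alpha_n(x)$ together with monotonicity of $m\mapsto k_X(f_{n,m}(x),f_{u,m}(y))$ for the converse. No issue there.

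However, the plan for surjectivity has a genuine gap. You propose to show that $\alpha_{m_h}|_Z\colon Z\to Z$ is \emph{eventually surjective} because it converges uniformly on compacts to $\id_Z$, and then propagate this via the nesting $\alpha_n(X)\subset\alpha_m(X)$. But convergence to the identity on a non-compact hyperbolic manifold does not force eventual surjectivity, and this can fail within the exact framework of the paper. Take $X=\D$, set $\alpha_n(z)=\tfrac{n}{n+1}z$ and $f_{n,m}(z)=\tfrac{n(m+1)}{m(n+1)}z$ for $m\geq n$: this is a forward holomorphic dynamical system with a relatively compact forward orbit, $\alpha_n=\lim_{m}f_{n,m}$, $\alpha_n(\D)\subset\alpha_{n+1}(\D)$, and $\alpha_{m_h}\to\id_\D$ so that $\alpha=\id_\D$ and $Z=\D$. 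Yet no $\alpha_n$ is surjective onto $Z$. The nesting therefore never stabilizes, and "eventual surjectivity of $\alpha_{m_h}|_Z$" is false. What saves the day is that $\bigcup_{n}\alpha_n(X)=Z$ can still hold even if every $\alpha_n(X)$ is a proper subset of $Z$.

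The correct mechanism is local and uses a Hurwitz-type theorem in several variables (the paper cites \cite[Corollary 3.2]{abstract}): fix $z\in Z$ and a small neighborhood $U$ of $z$ in $Z$ on which the maps $\alpha_{m_h}|_U$ are eventually injective (this follows from uniform convergence to $\id_Z$); then for $h$ large the image $\alpha_{m_h}(U)$ contains $z$. Hence $z\in\alpha_{m_h}(X)=\Psi(\Lambda_{m_h}(X))\subset\Psi(\Omega)$ for some $h$ \emph{depending on $z$}, which suffices for surjectivity of $\Psi$ without any claim of eventual global surjectivity of $\alpha_n$.
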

\begin{proof}
 Since $\alpha$ is a retraction, we have $\alpha(z)=z$ for all $z\in Z$, that is, $\alpha_{m_h}(z)\stackrel{h\rightarrow\infty}\longrightarrow z$ for all $z\in Z$.  Consider the sequence of holomorphic mappings $(\alpha_{m_h}|_Z\colon Z\to Z)$. This sequence converges uniformly on compact subsets to $\id_Z$, and thus it is eventually injective on compact subsets of $Z$. Fix $z\in Z$ and let $U$ be a neighborhood of $z$ in $Z$ such that $(\alpha_{m_h}|_U\colon U\to Z)$ is eventually injective. Then the image $\alpha_{m_h}|_U$ eventually contains $z$ (see e.g. \cite[Corollary 3.2]{abstract}). Hence we obtain that $\Psi\colon \Omega\to Z$ is surjective.

Assume now that $[(x,n)]\sim [(y,u)]$.
For all $m\geq {\rm max}\{n,u\}$, we have that $\Psi([(x,n)])=\alpha_m(f_{n,m}(x))$, and $\Psi([(y,u)])=\alpha_m(f_{u,m}(y))$.
We have $$k_X(\Psi([(x,n)]),\Psi([(y,u)]))\leq k_X(f_{n,m}(x),f_{u,m}(y))\stackrel{m\rightarrow\infty}\longrightarrow 0,$$
which implies  $\Psi([(x,n)])=\Psi([(y,u)])$.
 
Conversely, assume that $\Psi([(x,n)])=\Psi([(y,u)])$. 
It follows that  $$\lim_{h\to\infty}f_{n,m_h}(x)=\lim_{h\to\infty}f_{u,m_h}(y),$$ and thus $\lim_{h\to\infty}k_X(f_{n,m_h}(x),f_{u,m_h}(y))=0.$ Since the sequence  $(k_X(f_{n,m}(x),f_{u,m}(y)))_{m\geq {\rm max}\{n,u\}}$ is non-increasing, we have $[(x,n)]\sim [(y,u)]$.
\end{proof}
\begin{remark}\label{iphone}
It follows from Proposition \ref{x2} that $\bigcup_{n\geq 0}\alpha_n(X)=Z$, and that $\Psi$ induces a bijection $\hat\Psi\colon \Omega/_\sim\to Z$.
\end{remark}

\begin{proposition}\label{x3}
The pair $(Z,\alpha_n)$ is a canonical Kobayashi hyperbolic direct limit for $(f_{n,m})$.
\end{proposition}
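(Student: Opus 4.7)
The plan is to verify, in this order, the three defining conditions of a canonical Kobayashi hyperbolic direct limit for $(f_{n,m})$: Kobayashi hyperbolicity of $Z$, the intertwining identities, and the universal property.

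Kobayashi hyperbolicity of $Z$ will be immediate: since $X$ is taut (hence Kobayashi hyperbolic) and $Z=\alpha(X)$ is a closed complex submanifold of $X$, the inclusion $Z\hookrightarrow X$ is distance decreasing for the Kobayashi pseudodistances, so $Z$ inherits Kobayashi hyperbolicity from $X$. The intertwining identity $\alpha_m\circ f_{n,m}=\alpha_n$ has already been established in (\ref{hermione}), and by (\ref{potter}) the image of each $\alpha_n$ lies in $Z$, so the $\alpha_n$ may be regarded as holomorphic mappings $X\to Z$.

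For the universal property, suppose $Q$ is a Kobayashi hyperbolic complex manifold and $(g_n\colon X\to Q)$ is a family of holomorphic mappings with $g_m\circ f_{n,m}=g_n$ for all $m\ge n\ge 0$. First I would build $\Gamma\colon Z\to Q$ as a set map. By the universal property of the set-theoretic direct limit there is a unique $\Gamma_0\colon \Omega\to Q$ with $g_n=\Gamma_0\circ \Lambda_n$, and I will check that $\Gamma_0$ is constant on $\sim$-classes: if $[(x,n)]\sim [(y,u)]$, then since each $g_m\colon X\to Q$ is holomorphic into the Kobayashi hyperbolic $Q$,
\[k_Q(g_n(x),g_u(y))=k_Q(g_m(f_{n,m}(x)),g_m(f_{u,m}(y)))\le k_X(f_{n,m}(x),f_{u,m}(y))\stackrel{m\rightarrow\infty}\longrightarrow 0,\]
and so $g_n(x)=g_u(y)$. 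Via the bijection $\hat\Psi\colon \Omega/_\sim\to Z$ of Remark \ref{iphone}, $\Gamma_0$ then descends to a set map $\Gamma\colon Z\to Q$ satisfying $\Gamma\circ \alpha_n=g_n$ for every $n\ge 0$, and this identity determines $\Gamma$ uniquely since $\bigcup_{n\ge 0}\alpha_n(X)=Z$.

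The hard part is to show $\Gamma$ is holomorphic, and my plan is to exploit the same Hurwitz-type mechanism that drove the proof of Proposition \ref{x2}. Fix $z_0\in Z$. Since $\alpha_{m_h}|_Z\to \id_Z$ uniformly on compact subsets of $Z$, by \cite[Corollary 3.2]{abstract} there exists an open neighborhood $U\subset Z$ of $z_0$ such that, for all sufficiently large $h$, the restriction $\alpha_{m_h}|_U\colon U\to Z$ is injective holomorphic and $z_0\in \alpha_{m_h}(U)$. Since $\dim_\C U=\dim_\C Z$, the image $V_h\coloneqq \alpha_{m_h}(U)$ is open in $Z$ by the open mapping theorem, and $\alpha_{m_h}|_U\colon U\to V_h$ is a biholomorphism. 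The relation $\Gamma\circ \alpha_{m_h}=g_{m_h}$ then yields the local expression
\[\Gamma|_{V_h}=g_{m_h}\circ (\alpha_{m_h}|_U)^{-1},\]
which is holomorphic on the neighborhood $V_h$ of $z_0$. As $z_0\in Z$ was arbitrary, $\Gamma\in \Hol(Z,Q)$, completing the verification.
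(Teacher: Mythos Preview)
Your proof is correct and follows the same overall plan as the paper's: construct $\Gamma$ via the set-theoretic direct limit, show it factors through $\Omega/_\sim$ using the Kobayashi hyperbolicity of $Q$, transport it to $Z$ via the bijection $\hat\Psi$, and then verify holomorphy locally. The only difference is in the holomorphy step. The paper, given $z\in Z$, chooses $x\in X$ and $n$ with $\alpha_n(x)=z$, observes that $\alpha$ has maximal rank at $z$ and hence so does $\alpha_{m'}$ at a nearby point $w=f_{n,m'}(x)\in X$ for suitable $m'$, and then produces a local holomorphic section $\sigma\colon U\to X$ of $\alpha_{m'}\colon X\to Z$ so that $\Gamma=g_{m'}\circ\sigma$ on $U$. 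You instead stay inside $Z$: using $\alpha_{m_h}|_Z\to\id_Z$ and the Hurwitz-type argument already invoked in Proposition~\ref{x2}, you invert $\alpha_{m_h}|_U\colon U\to V_h$ and write $\Gamma|_{V_h}=g_{m_h}\circ(\alpha_{m_h}|_U)^{-1}$. Your variant is slightly more streamlined, as it avoids tracking the auxiliary point $x$ and the subsequence $(m_{k_n})$; both arguments rest on the same ingredients and yield the same conclusion.
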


\begin{proof}
First of all, $Z$ is Kobayashi hyperbolic since it is  a submanifold of $X$.
Let  $Q$ be a Kobayashi hyperbolic complex manifold and let $(g_n\colon X\to Q)$ be a family of holomorphic mappings satisfying
$$g_m\circ f_{n,m}=g_n,\quad \forall \ m\geq n\geq 0.$$
By the universal property of the direct limit, there exists a unique mapping $\Phi\colon \Omega\to Q$ such that 
$$g_n=\Phi\circ \Lambda_n,\quad \forall\ n\geq0.$$
The mapping $\Phi$ is defined in the following way: if $[(x,n)]\in \Omega$, then $\Phi([(x,n)])=g_n(x)$.
We claim that  $$[(x,n)]\sim [(y,u)]\Longrightarrow \Phi([(x,n)])=\Phi [(y,u)].$$
Indeed, if $[(x,n)]\sim [(y,u)]$, then
for all $m\geq {\rm max}\{n,u\}$, we have that $\Phi([(x,n)])=g_m(f_{n,m}(x))$, and $\Phi([(y,u)])=g_m(f_{u,m}(y))$.
We have $$k_X(\Phi([(x,n)]),\Phi([(y,u)]))\leq k_X(f_{n,m}(x),f_{u,m}(y))\stackrel{m\rightarrow\infty}\longrightarrow 0.$$

Thus there exists a unique mapping $\hat \Phi\colon \Omega/_\sim\to Q$ such that $\hat \Phi\circ \pi_{\sim}=\Phi$.

Set $$\Gamma\coloneqq \hat\Phi\circ {\hat\Psi}^{-1}\colon Z\to Q.$$
For all $n\geq 0$, $$\Gamma\circ \alpha_n=\Gamma\circ \Psi\circ \Lambda_n= \hat\Phi\circ\pi_\sim\circ \Lambda_n= \Phi\circ \Lambda_n=g_n.$$
The uniqueness of the mapping $\Gamma$ follows easily from the uniqueness of  the mappings $\Phi$ and  $\hat \Phi$.
The mapping $\Gamma $ acts in the following way: if $z\in Z$, then  
there exists $x\in X$ and $n\geq 0$ such that $\alpha_n(x)=z$, and then $\Gamma(z)=g_n(x).$
 
We now prove that $\Gamma$ is holomorphic. 
Let $z\in Z$, and let $x\in X$ and $n\geq 0$  such that $\alpha_n(x)=z$. 
Since $\alpha$ has maximal rank at $z$, there exists a neighborhood $V$ of $z$ in $X$ such that, for $m$ large enough,
$\alpha_m$ has maximal rank at every point $y\in V$.
Since the sequence $(f_{n,m_{k_n}}(x))_{k_n\geq 0}$ converges to $\alpha_n(x)=z$ as $k_n\to\infty$, it is eventually contained in $V$.
 Hence there exists $m'\geq 0$ such that $w\coloneqq f_{n,m'}(x)\in V$ and $\alpha_{m'}$ has maximal rank at $w$.
 Thus there exists an open neighborhood $U\subset Z$ of $z$ and a holomorphic function $\sigma\colon U\to X$ such that 
 $$\alpha_{m'}\circ\sigma=\id_U.$$
 Then, for all $y\in U$,
  $$\Gamma(y)=\Gamma(\alpha_{m'}(\sigma(y)))=g_{m'}(\sigma(y)),$$
 which means that $\Gamma$ is holomorphic in $U$.
\end{proof}

We denote by $\kappa$ the Kobayashi--Royden metric.
\begin{proposition}\label{x4}
For all $n\geq 0$,
\begin{equation}\label{ulula}
\lim_{m\to\infty}  f_{n,m}^*\, k_X=\alpha_n^* \, k_Z,
\end{equation}
and
\begin{equation}\label{ululi}
\lim_{m\to\infty} f_{n,m}^*\, \kappa_X=\alpha_n^* \, \kappa_Z.
\end{equation}
\end{proposition}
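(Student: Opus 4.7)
The plan is to sandwich the limit on the left-hand side of each equation between an obvious lower bound equal to $\alpha_n^* k_Z$ (respectively $\alpha_n^* \kappa_Z$) and a subsequential upper bound that yields the same quantity, so equality is forced.

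First I would observe monotonicity: for fixed $x,y\in X$, $v\in T_x X$ and $n\geq 0$, the sequences $m\mapsto k_X(f_{n,m}(x),f_{n,m}(y))$ and $m\mapsto \kappa_X(f_{n,m}(x); df_{n,m}(x)\,v)$ are non-increasing, because for $m'\geq m\geq n$ one has $f_{n,m'}=f_{m,m'}\circ f_{n,m}$ and holomorphic maps are contractions for both the Kobayashi distance and the Kobayashi--Royden metric. Hence both limits exist pointwise.

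For the lower bound, the identity (\ref{hermione}) gives $\alpha_m\circ f_{n,m}=\alpha_n$, so contracting by $\alpha_m$ yields
$$k_X(f_{n,m}(x),f_{n,m}(y))\geq k_X(\alpha_n(x),\alpha_n(y)),\qquad \kappa_X(f_{n,m}(x);df_{n,m}(x)\,v)\geq \kappa_X(\alpha_n(x);d\alpha_n(x)\,v).$$
Since $Z$ is a holomorphic retract of $X$ by Proposition \ref{x1}, the inclusion $\iota\colon Z\hookrightarrow X$ and $\alpha\colon X\to Z$ are holomorphic contractions with $\alpha\circ \iota=\id_Z$, which forces $k_X|_{Z\times Z}=k_Z$ and $\kappa_X|_{TZ}=\kappa_Z$. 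Consequently the right-hand sides above equal $(\alpha_n^*k_Z)(x,y)$ and $(\alpha_n^*\kappa_Z)(x;v)$ respectively, providing the required lower bounds for the monotone limits.

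For the matching upper bound I restrict to the diagonal subsequence $(m_h)$ along which $f_{n,m_h}\to \alpha_n$ uniformly on compact subsets; by Cauchy estimates one also has $df_{n,m_h}(x)\,v\to d\alpha_n(x)\,v$. The Kobayashi distance $k_X$ is continuous, and the Kobayashi--Royden metric $\kappa_X$ is continuous because $X$ is taut (a classical theorem of Royden), so passing to the limit along $(m_h)$ gives
$$\lim_h k_X(f_{n,m_h}(x),f_{n,m_h}(y))=k_X(\alpha_n(x),\alpha_n(y))=(\alpha_n^*k_Z)(x,y),$$
and analogously for $\kappa$. Since the full monotone limit exists, it coincides with this subsequential limit, completing the proof of (\ref{ulula}) and (\ref{ululi}). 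The only delicate step is invoking continuity of $\kappa_X$: tautness of $X$ is essential here, since for a general complex manifold one has only upper semicontinuity and the upper-bound step would fail.
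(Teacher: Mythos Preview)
Your argument is correct and follows essentially the same route as the paper: monotonicity of $m\mapsto f_{n,m}^*k_X$ (resp.\ $f_{n,m}^*\kappa_X$), convergence along the subsequence where $f_{n,\cdot}\to\alpha_n$ uniformly on compacta, continuity of $k_X$ and $\kappa_X$, and the retract identities $k_X|_{Z\times Z}=k_Z$, $\kappa_X|_{TZ}=\kappa_Z$. One small correction to your closing remark: the explicit lower bound you obtain from $\alpha_m\circ f_{n,m}=\alpha_n$ actually makes tautness unnecessary, since upper semicontinuity of $\kappa_X$ (valid on any complex manifold) already yields $\limsup_h\kappa_X(f_{n,m_h}(x);df_{n,m_h}(x)v)\le\kappa_X(\alpha_n(x);d\alpha_n(x)v)$, and combined with your lower bound this forces equality---so your proof is in fact slightly more robust than you claim.
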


\begin{proof}
Let $x,y\in X$, and fix $n\geq 0$. We have that $$\lim_{k_n\to\infty} k_X(f_{n,m_{k_n}}(x),f_{n,m_{k_n}}(y))=k_X(\alpha_n(x),\alpha_n(y))=k_Z(\alpha_n(x),\alpha_n(y)),$$
where the last identity follows from the fact that $\alpha_n(x),\alpha_n(y)\in Z$ and $Z$ is a holomorphic retract.
Then (\ref{ulula}) follows since the sequence $(k_X(f_{n,m}(x),f_{n,m}(y)))_{m\geq n}$ is  non-increasing.

The  proof of  (\ref{ululi}) is similar.
\end{proof}


\begin{definition}
Let $X$ be a Kobayashi hyperbolic complex manifold. We say that $X$ is {\sl cocompact} if $X/{\rm aut}(X)$ is compact. 
\end{definition}
Notice that this implies that $X$ is complete Kobayashi  hyperbolic \cite[Lemma 2.1]{FS}. 

\begin{theorem}\label{cocononaut}
Let $X$ be a cocompact Kobayashi hyperbolic complex manifold, and let $(f_{n,m}\colon X\to X)_{m\geq n\geq 0}$ be a forward  holomorphic dynamical system. 
Then there exists a canonical Kobayashi hyperbolic direct limit $(Z,\alpha_n)$  for $(f_{n,m})$, where $Z$ is a holomorphic retract of $X$.
Moreover,
\begin{equation}\label{edward}
Z=\bigcup_{n\geq 0}\alpha_n(X),
\end{equation}
and 
\begin{equation}\label{timburton}
\lim_{m\to\infty}  f_{n,m}^*\, k_X=\alpha_n^* \, k_Z,\quad
\lim_{m\to\infty} f_{n,m}^*\, \kappa_X=\alpha_n^* \, \kappa_Z.
\end{equation} 
\end{theorem}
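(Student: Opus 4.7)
The plan is to reduce to the setting of Propositions \ref{x1}--\ref{x4}, which treat a forward holomorphic dynamical system admitting a relatively compact forward orbit, via a time-dependent conjugacy of $(f_{n,m})$ by automorphisms of $X$. Since $X$ is cocompact Kobayashi hyperbolic it is complete Kobayashi hyperbolic, and hence taut, so those propositions will apply once a relatively compact orbit is arranged.

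Fix $x_0\in X$ and, using cocompactness, a compact set $K\subset X$ meeting every ${\rm aut}(X)$-orbit. For each $m\geq 0$ choose $\gamma_m\in{\rm aut}(X)$ with $\gamma_m(f_{0,m}(x_0))\in K$, taking $\gamma_0=\id$, and set
$$\tilde f_{n,m}:=\gamma_m\circ f_{n,m}\circ\gamma_n^{-1}.$$
The cocycle identity $f_{u,m}\circ f_{n,u}=f_{n,m}$ immediately gives $\tilde f_{u,m}\circ\tilde f_{n,u}=\tilde f_{n,m}$, so $(\tilde f_{n,m})$ is a forward holomorphic dynamical system on $X$, and by construction $\tilde f_{0,m}(x_0)\in K$ for every $m\geq 0$. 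Applying Propositions \ref{x1}--\ref{x4} to $(\tilde f_{n,m})$ produces a holomorphic retract $Z\subset X$ and a family $(\tilde\alpha_n\colon X\to Z)$ which is a canonical Kobayashi hyperbolic direct limit for $(\tilde f_{n,m})$, satisfies $Z=\bigcup_{n\geq 0}\tilde\alpha_n(X)$, and for which the Kobayashi limits (\ref{ulula})--(\ref{ululi}) hold for $(\tilde f_{n,m})$.

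I then define $\alpha_n:=\tilde\alpha_n\circ\gamma_n\colon X\to Z$. The conjugacy identity yields
$$\alpha_m\circ f_{n,m}=\tilde\alpha_m\circ\gamma_m\circ f_{n,m}=\tilde\alpha_m\circ\tilde f_{n,m}\circ\gamma_n=\tilde\alpha_n\circ\gamma_n=\alpha_n,$$
and since $\gamma_n$ is a bijection, $\alpha_n(X)=\tilde\alpha_n(X)$ so (\ref{edward}) follows. Because each $\gamma_n$ is a biholomorphism of $X$ and hence an isometry for both $k_X$ and $\kappa_X$, the identities (\ref{ulula})--(\ref{ululi}) for $(\tilde f_{n,m})$ pull back via $\gamma_n$ to the identities (\ref{timburton}) for $(f_{n,m})$.

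To verify the universal property of $(Z,\alpha_n)$, let $Q$ be Kobayashi hyperbolic and let $(g_n\colon X\to Q)$ satisfy $g_m\circ f_{n,m}=g_n$. Setting $\tilde g_n:=g_n\circ\gamma_n^{-1}$ gives $\tilde g_m\circ\tilde f_{n,m}=\tilde g_n$, so the universal property established for $(Z,\tilde\alpha_n)$ in Proposition \ref{x3} yields a unique holomorphic $\Gamma\colon Z\to Q$ with $\tilde g_n=\Gamma\circ\tilde\alpha_n$; precomposing with $\gamma_n$ gives $g_n=\Gamma\circ\alpha_n$, and the uniqueness of $\Gamma$ transfers. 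The only real obstacle is conceptual, namely recognizing that cocompactness allows one to shepherd an arbitrary forward orbit into a fixed compact set by a time-dependent automorphism; once this is done, Propositions \ref{x1}--\ref{x4} carry out the construction and all the desired properties transport formally under the conjugation.
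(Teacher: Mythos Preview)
Your proof is correct and follows essentially the same route as the paper: conjugate $(f_{n,m})$ by automorphisms $\gamma_m$ chosen via cocompactness so that the new system has a relatively compact forward orbit, apply Propositions \ref{x1}--\ref{x4} to obtain $(Z,\tilde\alpha_n)$, and then set $\alpha_n=\tilde\alpha_n\circ\gamma_n$ and transport the universal property and the metric limits back through the isometries $\gamma_n$. The only cosmetic difference is that the paper does not normalize $\gamma_0=\id$, so its relatively compact orbit starts at $h_0(x_0)$ rather than $x_0$.
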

\begin{proof}

 Let $K\subset X$ be a compact subset  such that $X=\Aut(X)\cdot K$.
Let $x_0\in X$. For all $n\geq 0$, let $h_n\in\Aut(X)$ be such that $h_n(f_{0,n}(x_0))\in K$.
For all $m\geq n\geq 0$ set ${\tilde f}_{n,m}\coloneqq h_m\circ f_{n,m}\circ h_n^{-1}$. It is easy to see that
$({\tilde f}_{n,m}\colon X\to X)$ is a  forward  holomorphic dynamical system such that
\begin{equation}
\{{\tilde f}_{0,m}(h_0(x_0))\}_{m\geq  0}\subset  K.
\end{equation}
We can now apply Proposition \ref{x3} to $({\tilde f}_{n,m}\colon X\to X)$, obtaining a canonical Kobayashi hyperbolic direct limit $(Z,\tilde\alpha_n)$ for $({\tilde f}_{n,m})$, where $Z$ is a holomorphic retract of $X$.
For all $n\geq0$ set $\alpha_n\coloneqq \tilde\alpha_n\circ h_n$. Clearly $$\alpha_m\circ f_{n,m}=\alpha_n,\quad \forall \ m\geq n\geq 0.$$
Let $Q$ be a Kobayashi hyperbolic manifold and let $(g_n\colon X\to Q)$ be a family of holomorphic mappings satisfying
$$g_m\circ f_{n,m}=g_n,\quad \forall \ m\geq n\geq 0.$$
For all $n\geq0$ set $\tilde g_n\coloneqq g_n\circ h_n^{-1}$.
Then for all $m\geq n\geq 0$, $$\tilde g_m\circ {\tilde f}_{n,m}= g_m\circ h_m^{-1}\circ {\tilde f}_{n,m}=g_m\circ f_{n,m}\circ h_n^{-1}=g_n\circ h_n^{-1}=\tilde g_n.$$
By the universal property of the canonical Kobayashi hyperbolic direct limit applied to  $(Z,\tilde\alpha_n)$
we obtain a holomorphic mapping $\Gamma\colon Z\to Q$ such that
$$\tilde g_n=\Gamma\circ \tilde\alpha_n,\quad \forall\ n\geq0.$$
Hence $g_n=\Gamma\circ \alpha_n $ for all $n\geq0$.

Remark \ref{iphone} yields (\ref{edward}).
Finally, (\ref{timburton}) follows from Proposition \ref{x4} since for all $n\geq 0$ the automorphism $h_n\colon X\to X$ is an isometry for $k_X$ and $\kappa_X$.
\end{proof}

\begin{remark}
Let $(\Omega,\Lambda_n)$ be the direct limit of the directed system $(X,f_{n,m})$. Let $(Z,\alpha_n)$ be the canonical Kobayashi hyperbolic direct limit given by Theorem \ref{cocononaut}. By the universal property of the direct limit,
there exists a  mapping $\Psi\colon \Omega\to Z$ such that  $\alpha_n=\Psi\circ \Lambda_n$ for all $n\geq 0$.
It is easy to see that $\Psi$ is surjective and induces a bijection $\hat\Psi\colon \Omega/_\sim\to Z$ such that 
$$\alpha_n=\hat\Psi\circ \pi_\sim\circ \Lambda_n, \quad \forall \ n\geq 0.$$

\end{remark}

\section{Autonomous iteration}
\begin{definition}
Let $X$ be a complex manifold and let $f\colon X\to X$ be a holomorphic self-map.
Let $x\in X$, and let $m\geq 0$. The {\sl $m$-step} $s_m(x)$ of $f$ at $x$ is the limit
$$s_m(x)=\lim_{n\to\infty}k_X(f^n(x), f^{n+m}(x)).$$ Such a limit exists since the sequence $(k_X(f^n(x), f^{n+m}(x))_{n\geq 0}$ is non-increasing.
The {\sl divergence rate} $c(f)$ of $f$ is the limit $$c(f)=\lim_{m\to\infty}\frac{k_X(f^m(x),x)}{m}.$$ It is shown in \cite{ABmod} that such a limit exists, does not depend on $x\in X$ and equals $\inf_{m\in \N}\frac{k_X(f^m(x),x)}{m}$.
\end{definition}
\begin{definition}\label{semimodel}
Let $X$ be a complex manifold and let $f\colon X\to X$ be a holomorphic self-map.
A {\sl semi-model} for $f$ is a triple  $(\Lambda,h,\v)$  where
$\Lambda$ is a complex manifold, $h\colon X\to \Lambda$ is a holomorphic mapping, and $\v\colon \Omega\to\Omega$ is an automorphism such that
\begin{equation}
h\circ f=\v\circ h,
\end{equation}
and 
\begin{equation}\label{due}
\bigcup_{n\geq 0} \v^{-n}(h(X))=\Lambda.
\end{equation}

We call the manifold $\Lambda$ the {\sl base space} and the mapping $h$ the {\sl intertwining mapping}.

Let $(Z,\ell,\tau)$ and   $(\Lambda, h, \v)$ be two semi-models for $f$. A {\sl morphism of semi-models} 
$\hat\eta\colon (Z,\ell,\tau)\to (\Lambda, h, \v)$ is given by a  holomorphic map $\eta: Z\to \Lambda$ 
such that  the following diagram commutes:
\SelectTips{xy}{12}
\[ \xymatrix{X \ar[rrr]^h\ar[rrd]^\ell\ar[dd]^f &&& \Lambda \ar[dd]^\varphi\\
&& Z \ar[ru]^\eta \ar[dd]^(.25)\tau\\
X\ar'[rr]^h[rrr] \ar[rrd]^\ell &&& \Lambda\\
&& Z \ar[ru]^\eta.}
\]
If the mapping $\eta \colon Z\to \Lambda$ is a biholomorphism, then we say that $\hat\eta\colon  (Z,\ell,\tau)\to (\Lambda, h, \v)$ is an {\sl isomorphism of semi-models}. Notice that then $\eta^{-1}\colon \Lambda\to Z$ induces a morphism $ {\hat\eta}^{-1}\colon  (\Lambda, h, \v)\to  (Z,\ell,\tau). $
\end{definition}

\begin{remark}\label{help}
It is shown in \cite[Lemmas 3.6 and 3.7]{ABmod} that if $ (Z,\ell,\tau), (\Lambda, h, \v)$ are semi-models for $f$, then there exists at most one morphism $\hat\eta\colon (Z,\ell,\tau)\to (\Lambda, h, \v)$, and that the holomorphic map $\eta: Z\to \Lambda$ is surjective.

\end{remark}

\begin{definition} 
Let $X$ be a complex manifold and let $f\colon X\to X$ be a holomorphic self-map. Let $(Z, \ell,\tau)$ be a semi-model for $f$  whose base space $Z$ is Kobayashi hyperbolic. We say that  $(Z, \ell,\tau)$ is a {\sl canonical Kobayashi hyperbolic semi-model} for $f$  if for any semi-model
$(\Lambda, h,\varphi )$ for $f$ such that the base space $\Lambda$ is Kobayashi hyperbolic,  there exists a  morphism of semi-models $\hat\eta\colon (Z, \ell,\tau)\to (\Lambda, h,\varphi )$ (which is necessarily unique by Remark \ref{help}).
 \end{definition}
 \begin{remark}\label{tango}
If $(Z, \ell,\tau)$ and $(\Lambda, h,\varphi )$ are two canonical Kobayashi hyperbolic semi-models for $f$, then they are isomorphic.
\end{remark}

\begin{theorem}\label{principaleforward}
 Let $X$ be a cocompact  Kobayashi hyperbolic complex manifold, and let $f\colon X\to X$ be a holomorphic self-map. Then there exists a  canonical Kobayashi hyperbolic semi-model $(Z,\ell,\tau)$ for $f$, where $Z$ is a holomorphic retract of $X$.
Moreover, the following holds:
\begin{enumerate}
\item  if $\alpha_n\coloneqq\tau^{-n}\circ \ell$ for all $n\geq 0$, then
$$\lim_{m\to\infty}  (f^m)^*\, k_X=\alpha_n^* \, k_Z,\quad
\lim_{m\to\infty} (f^{m})^*\, \kappa_X=\alpha_n^* \, \kappa_Z,$$
\item the divergence rate of $\tau$ satisfies $$c(\tau)=c(f)=\lim_{m\to\infty}\frac{s_m(x)}{m}=\inf_{m\in\N}\frac{s_m(x)}{m}.$$
\end{enumerate}
\end{theorem}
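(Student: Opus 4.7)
The plan is to deduce Theorem \ref{principaleforward} from Theorem \ref{cocononaut} applied to the autonomous forward holomorphic dynamical system $f_{n,m}\coloneqq f^{m-n}$. This immediately provides a pair $(Z,\alpha_n)$ where $Z$ is a holomorphic retract of $X$ and the $\alpha_n\colon X\to Z$ are holomorphic maps satisfying $\alpha_m\circ f^{m-n}=\alpha_n$, together with $Z=\bigcup_{n\geq 0}\alpha_n(X)$ and the metric convergence formulas of \eqref{timburton}. The crucial remaining step is to build from these data a biholomorphic automorphism $\tau\colon Z\to Z$ so that $(Z,\alpha_0,\tau)$ becomes a canonical Kobayashi hyperbolic semi-model.

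I would construct $\tau$ by exploiting the universal property twice. Both families $(\alpha_{n+1})_{n\geq 0}$ and $(\alpha_n\circ f)_{n\geq 0}$ are cocones for the autonomous system, that is, each satisfies $g_m\circ f^{m-n}=g_n$, as one checks by a one-line computation. The universal property then produces unique holomorphic maps $\Gamma,\tau\colon Z\to Z$ with $\alpha_{n+1}=\Gamma\circ\alpha_n$ and $\alpha_n\circ f=\tau\circ\alpha_n$ for all $n\geq 0$. A short calculation yields $\Gamma\circ\tau\circ\alpha_n=\Gamma\circ\alpha_n\circ f=\alpha_{n+1}\circ f=\alpha_n$ and similarly $\tau\circ\Gamma\circ\alpha_n=\alpha_n$, and the uniqueness clause of the universal property applied to the cocone $(\alpha_n)$ forces $\Gamma\circ\tau=\tau\circ\Gamma=\id_Z$. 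Thus $\tau$ is an automorphism with $\tau^{-1}=\Gamma$, and in particular $\alpha_n=\tau^{-n}\circ\alpha_0$.

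Setting $\ell\coloneqq\alpha_0$, the intertwining $\ell\circ f=\tau\circ\ell$ is immediate, and $\bigcup_{n\geq 0}\tau^{-n}(\ell(X))=\bigcup_{n\geq 0}\alpha_n(X)=Z$ by \eqref{edward}, so $(Z,\ell,\tau)$ is a semi-model. For canonicity, given any semi-model $(\Lambda,h,\varphi)$ with $\Lambda$ Kobayashi hyperbolic, the shifted family $h_n\coloneqq\varphi^{-n}\circ h$ satisfies $h_m\circ f^{m-n}=h_n$ by virtue of $h\circ f=\varphi\circ h$; the universal property then provides a unique holomorphic $\eta\colon Z\to\Lambda$ with $h_n=\eta\circ\alpha_n$, and the identity $\eta\circ\tau=\varphi\circ\eta$ follows by comparing $\eta\circ\tau\circ\alpha_n=h_n\circ f=\varphi\circ\eta\circ\alpha_n$ over the total family $(\alpha_n)$. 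Statement (1) is now a restatement of \eqref{timburton}.

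For (2), plugging (1) with $n=0$ into the definition of $c(\tau)$ gives $k_Z(\ell(x),\tau^m(\ell(x)))=k_Z(\ell(x),\ell(f^m(x)))=s_m(x)$, hence $c(\tau)=\lim_m s_m(x)/m$. The triangle inequality applied inside the limit defining $s_m$ yields subadditivity $s_{m+m'}(x)\leq s_m(x)+s_{m'}(x)$, and Fekete's lemma gives $\lim_m s_m(x)/m=\inf_m s_m(x)/m$. The identity $c(f)=\lim_m s_m(x)/m$ follows by sandwiching: the trivial bound $s_m(x)\leq k_X(x,f^m(x))$ provides one inequality, while the telescoping estimate $k_X(x,f^{Nm}(x))\leq\sum_{j=0}^{N-1}k_X(f^{jm}(x),f^{(j+1)m}(x))$, divided by $Nm$ and combined with a Cesaro argument (noting $k_X(f^{jm}(x),f^{(j+1)m}(x))\to s_m(x)$ as $j\to\infty$), yields $c(f)\leq s_m(x)/m$ for every $m$. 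The one genuinely delicate step of the whole argument is the invertibility of $\tau$: it is made painless by running the universal property in parallel on the two cocones $(\alpha_{n+1})$ and $(\alpha_n\circ f)$ and invoking uniqueness, so that no pre-images need be computed explicitly on the level of the direct limit.
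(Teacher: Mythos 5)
Your proof is correct and the main construction is the same as the paper's: apply Theorem \ref{cocononaut} to the autonomous system $f_{n,m}=f^{m-n}$, invoke the universal property of the canonical Kobayashi hyperbolic direct limit on the two shifted cocones $(\alpha_{n+1})$ and $(\alpha_n\circ f)$ to produce $\Gamma$ (the paper's $\delta$) and $\tau$, deduce $\Gamma\circ\tau=\tau\circ\Gamma=\id_Z$ from the uniqueness clause applied to the cocone $(\alpha_n)$, and verify canonicity via the shifted family $\varphi^{-n}\circ h$. The only genuine divergence is Property~(2): the paper invokes \cite[Proposition 2.7]{ABmod} for the chain $c(\tau)=\lim_m s_m(x)/m=\inf_m s_m(x)/m=c(f)$, whereas you unpack it into a direct argument --- subadditivity of $m\mapsto s_m(x)$ together with Fekete's lemma gives $\lim=\inf$, and the sandwich between the trivial bound $s_m(x)\le k_X(x,f^m(x))$ and the telescoping/Ces\`aro estimate $k_X(x,f^{Nm}(x))\le\sum_{j=0}^{N-1}k_X(f^{jm}(x),f^{(j+1)m}(x))$ gives $c(f)=\lim_m s_m(x)/m$. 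Both routes are sound; yours makes the proof self-contained at the cost of a few extra lines, and is essentially a proof of the cited proposition.
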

\begin{proof}
Let $(f_{n,m}\colon X\to X)$ be the autonomous dynamical system defined by $f_{n,m}=f^{m-n}$. By Theorem \ref{cocononaut}, there exist
 a holomorphic retract $Z$ of $X$ and a family of holomorphic mappings
$(\alpha_n\colon X\to Z)$ such that the pair $(Z,\alpha_n)$ is a canonical Kobayashi hyperbolic direct limit for $(f_{n,m})$.
The sequence of holomorphic mappings $(\beta_n\coloneqq\alpha_n\circ f\colon X\to Z)$ satisfies, for all $m\geq n\geq 0$,
$$\beta_m\circ f_{n,m}=\alpha_m\circ f\circ f^{m-n}=\alpha_n\circ f=\beta_n.$$
By the universal property of the canonical Kobayashi hyperbolic direct limit there exists a holomorphic self-map $\tau\colon Z\to Z$ such that for all $n\geq 0$, 
 $$\tau\circ \alpha_n=\alpha_n\circ f.$$
We claim that $\tau$ is a holomorphic automorphism.
For all $n\geq 0$, set $\gamma_n\coloneqq \alpha_{n+1}$. For all $m\geq n\geq 0$,
$$\gamma_m\circ f_{n,m}=\alpha_{m+1}\circ f^{m-n}=\alpha_{n+1}=\gamma_n.$$
Thus there exists a holomorphic self-map $\delta\colon Z\to Z$ such that $\delta\circ \alpha_n=\alpha_{n+1}$ for all $n\geq 0$.
For all $n\geq 0$ we have $$\tau\circ\delta\circ \alpha_n=\tau\circ \alpha_{n+1}=\alpha_n,$$ and
$$\delta\circ \tau\circ \alpha_n=\delta\circ \alpha_n\circ f=\alpha_{n+1}\circ f=\alpha_n.$$ By the universal property of the canonical Kobayashi hyperbolic direct limit we have that $\tau$ is a holomorphic automorphism and $\delta=\tau^{-1}$. Since for all $n\geq 0$, $$\tau^n\circ \alpha_n=\alpha_n\circ f^n=\alpha_0,$$ it follows that $\alpha_n=\tau^{-n}\circ \alpha_0$.

Set $\ell\coloneqq \alpha_0$. We claim that the triple $(Z,\ell, \tau)$ is a canonical Kobayashi hyperbolic semi-model for $f$. 
Indeed, let $(\Lambda, h,\varphi )$ be a semi-model for $f$ such that the base space $\Lambda$ is Kobayashi hyperbolic. For all $n\geq 0$, let $\lambda_n\coloneqq  \v^{-n}\circ h$. Then by the universal property of the canonical Kobayashi hyperbolic direct limit there exists a holomorphic mapping $\eta\colon Z\to \Lambda$ such that for all $n\geq 0$ we have $\eta\circ \alpha_n= \lambda_n,$
that is $$\eta\circ \tau^{-n}\circ \ell= \v^{-n}\circ h.$$
Notice that this implies  $\eta\circ \ell=h$, and 
if $n\geq 0$, $$\v\circ\eta\circ \tau^{-1}\circ \alpha_n=\v\circ \v^{-n-1}\circ h=\lambda_n=\eta\circ\alpha_n.$$
Thus by the universal property of the canonical Kobayashi hyperbolic direct limit, $\eta=\v\circ\eta\circ \tau^{-1}$.
 Hence the mapping $\eta\colon Z\to \Lambda$ gives a morphism of semi-models $\hat\eta\colon  (Z,\ell,\tau)\to(\Lambda, h,\varphi )$.

Property (1)  follows clearly from Theorem \ref{cocononaut}. Property (1) implies in particular that for all $m\geq 0$ and $x\in X$,
the $m$-step $s_m(x)$ satisfies
$$s_m(x)=k_Z(\ell(z),\tau^m(\ell(z))).$$
By \cite[Proposition 2.7]{ABmod} 
$$c(\tau)=\lim_{m\to\infty}\frac{k_Z(\ell(z),\tau^m(\ell(z)))}{m}=\lim_{m\to\infty}\frac{s_m(x)}{m}=\lim_{m\to\infty}\frac{k_X(f^m(x),x)}{m}= c(f),$$
and
$$c(\tau)=\inf_{m\in \N}\frac{k_Z(\ell(z),\tau^m(\ell(z)))}{m}=\inf_{m\in \N}\frac{s_m(x)}{m},$$
which proves Property (2).

\end{proof}
\begin{remark}
Actually, the proof shows that the semi-model $(Z,\ell,\tau)$ satisfies the following stronger universal property. If $\Lambda$ is a Kobayashi hyperbolic complex manifold, if $\v\colon \Lambda\to \Lambda$ is an automorphism and if $h\colon X\to \Lambda$ is a holomorphic mapping such that $h\circ f=\v\circ h$ (notice that we do not assume (\ref{due})), then there exists a holomorphic mapping $\eta\colon Z\to \Lambda$ such that 
$\eta\circ \ell=h$ and $\eta\circ \tau= \v\circ\eta$.
Clearly, $\eta(Z)=\cup_{n\geq 0} \v^{-n}h(X)$.
\end{remark}

\section{The unit ball}\label{theunitball}

\begin{definition}\label{jafar}

The Siegel upper half-space $\mathbb H^q$ is defined by $$\mathbb{H}^q=\left\{(z,w)\in \C\times \C^{q-1}, \Im(z)>\|w\|^2\right\}.$$ Recall that $\mathbb H^q$ is biholomorphic to the ball $\B^q$ via the {\sl Cayley transform} $\Psi\colon \B^q\to \H^q$ defined as $$\Psi(z,w)=\left(i\frac{1+z}{1-z},\frac{w}{1-z}\right), \quad (z,w)\in \C\times \C^{q-1}.$$

Let $\langle\cdot, \cdot\rangle$ denote the standard Hermitian product in $\C^q$. In several complex variables, the natural generalization of the non-tangential limit at the boundary is the following.
 If $\zeta\in \partial\B^q$, then the set $$K(\zeta,R)\coloneqq\{z\in \B^q: |1-\langle z,\zeta\rangle|< R(1-\|z\|)\}$$ is a {\sl Kor\'anyi region} of {\sl vertex} $\zeta$ and {\sl amplitude} $R> 1$.
Let $f\colon \B^q \to \C^m$ be a holomorphic map. We say that $f$ has {\sl $K$-limit} $L\in \C^m$  at
$\zeta$ (we write $K\hbox{-}\lim_{z\to \zeta}f(z)=L$) if for
each sequence $(z_n)$ converging to $\zeta$ such that
$(z_n)$  belongs eventually to some Kor\'anyi region of vertex $\zeta$, we have
that $f(z_n)\to L$.
The Kor\'anyi regions can also be easily described in the Siegel upper half-space  $\mathbb H^q$, see e.g. \cite{BGP}.

Let $\zeta\in \partial \B^q$. 
A sequence $(z_n)\subset \B^q$ converging to $\zeta\in \partial\B^q$ is said to be {\sl restricted} at $\zeta$ if   $\la z_n, \zeta\ra\to 1$ non-tangentially in $\D$, while  it is said to be {\sl special} at $\zeta$ if
\[
\lim_{n\to \infty}k_{\B^q}(z_n,\langle z_n,\zeta\rangle \zeta)=0.
\]

We say that $f$ has {\sl  restricted $K$-limit}
$L$ at $\zeta$ (we write $\angle_K\lim_{z\to \zeta}f(z)=L$) if
for every special and restricted sequence $(z_n)$ converging to $\zeta$ we have
that $f(z_n)\to L$.
\end{definition}

One can show that
\[
K\hbox{-}\lim_{z\to \zeta}f(z)=L\Longrightarrow \angle_K\lim_{z\to
\zeta}f(z)=L,
\]
but the converse implication  is not true in general.

\begin{definition}\label{magamago'}
A point $\zeta\in \partial \B^q$ such that $K\hbox{-}\lim_{z\to\zeta}f(z)=\zeta$ and
$$\liminf_{z\to\zeta}\frac{1-\|f(z)\|}{1-\|z\|}=\lambda<+\infty$$ is called a {\sl boundary regular fixed point}, and $\lambda$ is called  its {\sl dilation}.

\end{definition}

The following  result \cite{H} generalizes the Denjoy--Wolff theorem in the unit disc.
\begin{theorem}
Let $f\colon \B^q\to \B^q$ be holomorphic. Assume that $f$ admits no fixed points in $\B^q$.
Then there exists a  point $p\in \de \B^q$, called the {\sl Denjoy--Wolff point} of $f$, such that $(f^n)$ converges uniformly on compact subsets to the constant map $z\mapsto p$. The Denjoy--Wolff point of $f$ is a boundary regular fixed point and 
its dilation $\lambda$ is smaller than or equal to $1$.
\end{theorem}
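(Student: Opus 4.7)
The plan is to follow the classical two-step strategy (Wolff's invariant-horosphere lemma followed by an iteration-convergence argument), adapted to $\B^q$ using the tautness of the ball and the theory of horospheres at boundary points.

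First I would establish that $(f^n)$ is compactly divergent, i.e.\ no subsequence converges uniformly on compact subsets to a holomorphic map $g\colon \B^q\to \B^q$ with $g(\B^q)\subseteq \B^q$. Since $\B^q$ is taut, the only alternative is that every subsequence escapes every compact set. The first possibility is ruled out as follows: a standard iteration-of-iterates argument shows that any such limit $g$ must be idempotent after composition with a suitable iterate, hence a holomorphic retraction onto a submanifold $Y\subseteq\B^q$, and $f|_Y$ is an automorphism of $Y$; since holomorphic retracts of $\B^q$ are affine balls, this automorphism has an interior fixed point, contradicting the hypothesis.

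Next I would prove Wolff's lemma. For each $\varepsilon\in(0,1)$, the map $(1-\varepsilon)f$ sends $\overline{\B^q}$ into a compact subset of $\B^q$ and by Brouwer's theorem admits a fixed point $p_\varepsilon\in\B^q$ with $\|p_\varepsilon\|<1-\varepsilon$. Extracting a convergent subsequence $p_{\varepsilon_n}\to p$, the point $p$ must lie in $\partial\B^q$ (otherwise $f$ would have an interior fixed point). Using that $f$ contracts the Kobayashi distance and that $f(p_\varepsilon)=p_\varepsilon/(1-\varepsilon)$, I would derive invariance under $f$ of the Kobayashi balls centered at $p_\varepsilon$; an appropriate renormalization as $\varepsilon\to 0$ then produces a family of horospheres $E(p,R)$ at $p$ satisfying $f(E(p,R))\subseteq E(p,R)$ for every $R>0$.

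From here the conclusion follows quickly. Compact divergence together with tautness forces every subsequential limit of $(f^n)$ to be a constant boundary map; the invariance of every horosphere at $p$ then forces the limiting constant to equal $p$, since $\overline{E(p,R)}\cap \partial\B^q=\{p\}$. Hence $f^n\to p$ uniformly on compact subsets, and $p$ is the Denjoy--Wolff point. Finally, from the invariance $f(E(p,R))\subseteq E(p,R)$ I would read off the Julia-type inequality
$$\liminf_{z\to p}\frac{1-\|f(z)\|}{1-\|z\|}\leq 1$$
and the $K$-limit $K\hbox{-}\lim_{z\to p}f(z)=p$, which together say that $p$ is a boundary regular fixed point with dilation $\lambda\leq 1$ in the sense of Definition \ref{magamago'}.

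The hard part will be the first step: in several variables, the exclusion of interior subsequential limits of $(f^n)$ is delicate and hinges on the structure theorem that holomorphic retracts of $\B^q$ are affine balls, so that the hypothetical retraction $g$ forces $f|_{g(\B^q)}$ to be an automorphism of a bounded symmetric domain with an interior fixed point, contradicting the no-fixed-point hypothesis.
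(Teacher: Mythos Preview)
The paper does not prove this statement; it is quoted from Herv\'e \cite{H} as a background result and stated without proof. Your outline follows the classical strategy (as in Abate \cite{A}), and the Wolff-lemma and convergence steps are essentially correct.

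There is, however, a real gap in your first step. From the fact that $f|_Y$ is an automorphism of a retract $Y\cong\B^k$ you conclude directly that it has an interior fixed point. This is false as stated: hyperbolic and parabolic automorphisms of $\B^k$ (for $k\geq 1$) have no fixed point in $\B^k$. The missing ingredient is that the family $\{(f|_Y)^n : n\in\Z\}$ is relatively compact in $\Aut(Y)$. Indeed, in the construction of the limit retract one arranges that some subsequence $f^{n_j}|_Y$ converges to $\id_Y$, so the closed subgroup of $\Aut(Y)$ generated by $f|_Y$ is compact; a compact subgroup of $\Aut(\B^k)$ has a common fixed point in $\B^k$ (by Cartan's theorem, or by averaging against Haar measure), and \emph{that} is what produces the contradiction. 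The difficulty you flag in your final paragraph---that retracts of $\B^q$ are affine balls---is genuine but is not the crux; the crux is this compactness-implies-fixed-point step, which your sketch omits.
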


\begin{remark}\label{peterpan}
Let $f\colon\B^q\to\B^q$ be a holomorphic self-map  without fixed points, and let $\lambda$ be the dilation at its Denjoy--Wolff fixed point. Then by \cite[Proposition 5.8]{ABmod} the divergence rate of $f$ satisfyies $$c(f)=-\log \lambda.$$ 
\end{remark}

\begin{definition}
A holomorphic self-map $f\colon \B^q\to \B^q$ is called
\begin{enumerate}
\item {\sl elliptic} if it admits a fixed point $z\in \B^q$,
\item {\sl parabolic} if it admits no fixed points $z\in \B^q$, and its dilation at the Denjoy--Wolff point is  equal to $1$,
\item {\sl hyperbolic } if it admits no fixed points $z\in \B^q$, and its dilation at the Denjoy--Wolff point  is strictly smaller than $1$.  
\end{enumerate}
If $s_1(z)>0$ for all $z\in \B^q$, then we say that $f$ is {\sl  nonzero-step}.
\end{definition}

The next result generalizes Theorem \ref{blu} to the unit ball.

\begin{theorem}\label{fin1}
Let $f\colon \B^q\to \B^q$ be a hyperbolic holomorphic self-map, with dilation $\lambda$ at its Denjoy--Wolff point $p\in \partial \B^q$.
Then there exist
\begin{enumerate}
\item an integer $k$ such that $1\leq k\leq q$, 
\item a hyperbolic automorphism $\v\colon \H^k\to \H^k$ of the form
\begin{equation}\label{lomo1}
\v(z,w)= \left(\frac{1}{\lambda} z,\frac{e^{it_1}}{\sqrt \lambda}w_1,\dots, \frac{e^{it_{k-1}}}{\sqrt \lambda}w_{k-1} \right),
\end{equation}
where $t_j\in \R$ for $1\leq j\leq k-1$, 
\item a holomorphic mapping
$h\colon \B^q\to \H^k$ with  $K\hbox{-}\lim_{x\to p}h(x)=\infty,$
\end{enumerate}
 such that the triple
 $(\H^k,h,\v)$ is a  canonical Kobayashi hyperbolic model for $f$.
\end{theorem}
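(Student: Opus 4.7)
The plan is to apply Theorem \ref{principaleforward} to obtain a canonical Kobayashi hyperbolic semi-model $(Z,\ell,\tau)$, then identify the base space as $\B^k$, conjugate $\tau$ to the desired normal form in Siegel coordinates, and finally verify the $K$-limit behaviour of the intertwining mapping at $p$.

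First, Theorem \ref{principaleforward} produces a canonical Kobayashi hyperbolic semi-model $(Z,\ell,\tau)$ with $Z$ a holomorphic retract of $\B^q$. By the classical classification of holomorphic retracts of the unit ball (every such retract is, up to an automorphism, an affine slice of $\B^q$), $Z$ is biholomorphic to $\B^k$ for some $1\leq k\leq q$, and from now on I identify $Z$ with $\B^k$. Next, Theorem \ref{principaleforward}(2) together with Remark \ref{peterpan} gives
$$c(\tau)=c(f)=-\log\lambda>0.$$
Since elliptic automorphisms of $\B^k$ have zero divergence rate (they fix an interior point) and parabolic automorphisms also have zero divergence rate, the automorphism $\tau$ must be hyperbolic. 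Applying Remark \ref{peterpan} in reverse to $\tau$ then shows that the dilation of $\tau$ at its Denjoy--Wolff point on $\partial \B^k$ equals $\lambda$.

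I now pass to the Siegel half-space. Choose a biholomorphism $\Psi\colon \B^k\to \H^k$ sending the Denjoy--Wolff point of $\tau$ to $\infty$, and set $\v\coloneqq \Psi\circ \tau\circ \Psi^{-1}$. The map $\v$ is then a hyperbolic automorphism of $\H^k$ fixing $\infty$ with dilation $\lambda$ there. Every such automorphism is affine, and up to pre-composition of $\Psi$ with a Heisenberg translation (an automorphism of $\H^k$ fixing $\infty$), I may arrange for the repelling fixed point of $\v$ to be the origin. The subgroup of automorphisms of $\H^k$ fixing both $0$ and $\infty$ consists exactly of the maps $(z,w)\mapsto (z/\lambda, Uw/\sqrt{\lambda})$ with $U\in U(k-1)$ unitary, so by a unitary change of coordinates in the $w$-variable I diagonalize $U$ and obtain the normal form (\ref{lomo1}). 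Setting $h\coloneqq \Psi\circ \ell$, the triple $(\H^k,h,\v)$ inherits the canonical Kobayashi hyperbolic semi-model property from $(Z,\ell,\tau)$ by transport through the biholomorphism $\Psi$.

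It remains to verify $K\hbox{-}\lim_{x\to p}h(x)=\infty$. From the intertwining relation $h\circ f^n=\v^n\circ h$, together with $f^n(x)\to p$ and $\v^n(w)\to \infty$ for every $w\in \H^k$, one gets only sequential convergence along forward orbits. The key additional input is Julia's lemma in the ball, which says that $f$ contracts the horospheric function $E_p(x)=|1-\langle x,p\rangle|^2/(1-\|x\|^2)$ at $p$ by a factor at most $\lambda$, together with the observation that $\v$ scales the horospheric height $\Phi(z,w)=\Im(z)-\|w\|^2$ at $\infty$ by the reciprocal factor $1/\lambda$. This matching of horospheric scalings, combined with the boundedness of $E_p$ on Kor\'anyi regions at $p$ and standard Lindel\"of-type principles for holomorphic mappings of $\B^q$, forces $\Phi\circ h$ to blow up along every sequence in $\B^q$ approaching $p$ within a Kor\'anyi region. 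Upgrading the horospheric matching to a genuine $K$-limit for the holomorphic mapping $h$ is the main obstacle in the proof; it is handled by arguments modelled on those in \cite{BGP}.
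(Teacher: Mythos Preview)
Your proof is correct and follows the same route as the paper: apply Theorem~\ref{principaleforward}, identify the retract with a ball, use $c(\tau)=c(f)=-\log\lambda>0$ to see $\tau$ is hyperbolic with dilation $\lambda$, conjugate to the normal form (\ref{lomo1}) on $\H^k$, and invoke an external result for the boundary behaviour of $h$. Two small points: the retract classification only gives $0\leq k\leq q$, and $k\geq 1$ should be deduced \emph{afterwards} from $c(\tau)>0$ (a point would force $c(\tau)=0$), not asserted up front; and for the $K$-limit the paper bypasses your horospheric sketch entirely by citing \cite[Proposition~5.11]{ABmod} directly.
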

\begin{proof}
Since $\B^q$ is cocompact and Kobayashi hyperbolic, by Theorem \ref{principaleforward} there exists a
 canonical Kobayashi hyperbolic semi-model $(Z, \ell,\tau)$ for $f$.
Since $Z$ is a holomorphic retract of $\B^q$, it is biholomorphic to $\B^k$ for some $0\leq k\leq q$ (see e.g. \cite[Corollary 2.2.16]{A}). By Remark \ref{peterpan} and by (2) of Theorem \ref{principaleforward}, we  have  $c(\tau)=c(f)=-\log \lambda$,  hence $k\geq 1$ and $\tau$ is a hyperbolic automorphism  with dilation $\lambda$ at its Denjoy--Wolff point.
Thus there exists (see e.g. \cite[Proposition 2.2.11]{A}) a biholomorphism $\gamma\colon Z\to \H^k$ such that
$\varphi\coloneqq \gamma\circ\tau\circ\gamma^{-1} $ is of the form (\ref{lomo1}).
Setting $h\coloneqq  \gamma\circ\ell$ we have  that $(\H^k, h,\varphi)$ is also a canonical Kobayashi hyperbolic semi-model for $f$.
By \cite[Proposition 5.11]{ABmod}, we have $K\hbox{-}\lim_{x\to p}h(x)=\infty.$

\end{proof}

\begin{corollary}\label{valiron}
Let $f\colon \B^q\to \B^q$ be a hyperbolic holomorphic self-map, with dilation $\lambda$ at its Denjoy--Wolff point $p\in \partial \B^q$.
Then there exists a holomorphic mapping  $\vartheta\colon \B^q\to\H$ solving the Valiron equation (\ref{valironequation}).

\end{corollary}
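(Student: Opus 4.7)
The plan is immediate once Theorem \ref{fin1} is in hand: I would simply take the first coordinate of the intertwining mapping given by the canonical Kobayashi hyperbolic semi-model.

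First I would invoke Theorem \ref{fin1} to obtain an integer $1\leq k\leq q$, a holomorphic mapping $h\colon \B^q\to \H^k$, and a hyperbolic automorphism $\v\colon \H^k\to \H^k$ of the form
\begin{equation*}
\v(z,w_1,\dots,w_{k-1})=\left(\tfrac{1}{\lambda}z,\tfrac{e^{it_1}}{\sqrt{\lambda}}w_1,\dots,\tfrac{e^{it_{k-1}}}{\sqrt{\lambda}}w_{k-1}\right),
\end{equation*}
such that $(\H^k,h,\v)$ is a canonical Kobayashi hyperbolic semi-model for $f$. In particular the semi-conjugacy relation $h\circ f=\v\circ h$ holds on $\B^q$.

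Next I would consider the first-coordinate projection $\pi_1\colon \C\times \C^{k-1}\to \C$, $(z,w)\mapsto z$. From the very definition of the Siegel upper half-space, $\H^k=\{(z,w): \Im(z)>\|w\|^2\}$, any point of $\H^k$ satisfies $\Im(z)>\|w\|^2\geq 0$, so $\pi_1$ restricts to a holomorphic mapping $\H^k\to \H$. Define
\begin{equation*}
\vartheta\coloneqq \pi_1\circ h\colon \B^q\to \H.
\end{equation*}

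Finally, reading off the first coordinate of the identity $h\circ f=\v\circ h$ and using the explicit form of $\v$, I would conclude
\begin{equation*}
\vartheta(f(x))=\pi_1(\v(h(x)))=\tfrac{1}{\lambda}\,\pi_1(h(x))=\tfrac{1}{\lambda}\vartheta(x),\qquad x\in \B^q,
\end{equation*}
which is the Valiron equation (\ref{valironequation}). There is essentially no obstacle to overcome: all of the work (existence of the model, hyperbolicity of $\tau$, correct dilation, Siegel normal form of $\v$) has been carried out in Theorem \ref{fin1}, and the only additional ingredient is the trivial observation that $\pi_1(\H^k)\subset \H$.
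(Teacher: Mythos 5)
Your proof is correct and follows essentially the same route as the paper: invoke Theorem \ref{fin1} for the canonical Kobayashi hyperbolic semi-model $(\H^k,h,\varphi)$, compose with the first-coordinate projection $\pi_1\colon \H^k\to\H$, and read off the Valiron equation from the normal form of $\varphi$. The only cosmetic difference is that you verify the intertwining relation directly, whereas the paper phrases the conclusion as ``$(\H,\pi_1\circ h, z\mapsto \tfrac{1}{\lambda}z)$ is a semi-model for $f$''; both yield the desired solution.
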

\begin{proof}
Let $(\H^k,h,\v)$ be the  canonical Kobayashi hyperbolic semi-model given by Theorem \ref{fin1}. Let $\pi_1\colon \H^k\to \H$ be the projection $\pi_1(z,w)=z.$ Then $\left(\H,\vartheta\coloneqq \pi_1\circ h,x\mapsto \frac{1}{\lambda}x\right)$ is a semi-model for $f$, and thus $\vartheta$ solves the Valiron equation (\ref{valironequation}). 

\end{proof}

\begin{remark}
If $q=1$,  then the following uniqueness result holds \cite{BP}: any holomorphic solution of the Valiron equation (\ref{valironequation}) is a positive multiple of a given solution $\vartheta\colon \H\to \H$.

If $q\geq 2$, the situation is quite different. It is easy to see that the solutions of  (\ref{valironequation}) are all the holomorphic mappings of the form 
$ \Gamma \circ h,$ where $(\H^k,h,\v)$ is the  canonical Kobayashi hyperbolic semi-model given by Theorem \ref{fin1}, and  $\Gamma\colon \H^k\to \H$  is a holomorphic function such that   
\begin{equation}\label{stop}
\Gamma\circ \v=\frac{1}{\lambda}\Gamma.
\end{equation}
Notice that for all $z\in\H$, $$\Gamma\left(\frac{1}{\lambda}z,0\right)=\frac{1}{\lambda}\Gamma(z,0),$$
which by a result of Heins \cite{heins} implies that  $\Gamma(z,0)=az$ for some $a>0$ (and thus $\Gamma(\H^k)=\H$).
Thus if $k=1$ we obtain again a uniqueness result:  any holomorphic solution of (\ref{valironequation}) is a positive multiple of a given solution $\vartheta\colon \H^q\to \H$.

Assume now that $k\geq 2$. The function $\Gamma$ is  unique up to positive multiples on the slice $\{w=0\}$ of $\H^k$, but is far from being unique on  $\H^k\smallsetminus\{w=0\}$. This can be seen, for example, in the following way.
If $\gamma\colon \H^k\to \H^k$ is a holomorphic self-map which commutes with the hyperbolic automorphism $\v$, then clearly
$\Gamma\coloneqq \pi_1\circ \gamma$ satisfies (\ref{stop}). The family of holomorphic mappings of the form $\pi_1\circ \gamma$ is  large, as shown (and made precise) in \cite[Theorem 2.5]{defa}.
\end{remark}


Recall the following result on the Abel equation in the unit disc. 
\begin{theorem}[Pommerenke \cite{pommerenke}]
Let $f\colon \D\to \D$ be a parabolic nonzero-step holomorphic self-map. Then there exists a model 
$(\H,h, z\mapsto z\pm 1)$ for $f$.
\end{theorem}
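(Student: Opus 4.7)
The plan is to apply Theorem \ref{principaleforward} to $f$ and to use the one-dimensionality of $\D$ together with the parabolic and nonzero-step hypotheses to force the canonical Kobayashi hyperbolic semi-model into the stated form.

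Since $\D$ is cocompact and Kobayashi hyperbolic, Theorem \ref{principaleforward} produces a canonical Kobayashi hyperbolic semi-model $(Z,\ell,\tau)$ for $f$ with $Z$ a holomorphic retract of $\D$. The only holomorphic retracts of $\D$ are single points and $\D$ itself; by property (1) of Theorem \ref{principaleforward},
\[
s_m(x)=k_Z(\ell(x),\tau^m(\ell(x))),
\]
and the nonzero-step hypothesis $s_1(x)>0$ rules out $Z$ being a point. Hence $Z=\D$ and $\tau\in\Aut(\D)$. By property (2) of Theorem \ref{principaleforward} combined with Remark \ref{peterpan}, $c(\tau)=c(f)=-\log 1=0$, so $\tau$ is neither hyperbolic nor (lest $s_m\equiv 0$) the identity.

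The key step is to exclude $\tau$ being elliptic. If $\tau$ fixed a point $q\in\D$, every $\tau$-orbit would lie on a hyperbolic circle around $q$, so $s_m(x)$ would be bounded. On the other hand, for a parabolic nonzero-step holomorphic self-map of $\D$ one has $s_m(x)\to\infty$ as $m\to\infty$; this is classical and can be seen from the fact that the forward orbit eventually enters a parabolic petal at the Denjoy--Wolff point $p$ inside which $f$ is comparable to the translation $w\mapsto w+1$ on $\H$, whose $m$-step in the hyperbolic metric of $\H$ grows like $\log m$. This contradiction forces $\tau$ to be a parabolic automorphism of $\D$. Fixing a biholomorphism $\gamma\colon\D\to\H$ that sends the unique boundary fixed point of $\tau$ to $\infty$, the conjugate $\gamma\circ\tau\circ\gamma^{-1}$ takes the form $z\mapsto z+c$ with $c\in\R\setminus\{0\}$; a further conjugation by a positive real dilation of $\H$ (which is an automorphism of $\H$ commuting with the translation structure) normalizes $|c|=1$. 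Taking $h$ to be $\ell$ composed with these biholomorphisms then gives a canonical Kobayashi hyperbolic semi-model $(\H,h,z\mapsto z\pm 1)$ for $f$, with the sign determined by the orientation of the $f$-orbit along horocycles at $p$.

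The main obstacle I expect is upgrading from a semi-model to a model: one must exhibit an $f$-absorbing domain on which $h$ is univalent, and this is not automatic from Theorem \ref{principaleforward}, which only produces a semi-model. Here one needs the finer dynamical analysis underlying Pommerenke's original argument---identifying an $f$-invariant horocyclic neighborhood of $p$ on which $f$ is injective and on which the abstractly defined $h$ turns out to be univalent.
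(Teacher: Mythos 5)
The paper does not prove Pommerenke's theorem; it is stated as a background result cited from \cite{pommerenke}. The nearest analogue is the paper's own Theorem \ref{fin2}, which generalizes it to the ball $\B^q$, and your proposal is essentially the proof of Theorem \ref{fin2} specialized to $q=1$: invoke Theorem \ref{principaleforward} to obtain a canonical Kobayashi hyperbolic semi-model $(Z,\ell,\tau)$ with $Z$ a holomorphic retract of $\D$, use nonzero-step to rule out $Z$ a point and $\tau$ elliptic, use $c(\tau)=c(f)=0$ via Remark \ref{peterpan} to rule out $\tau$ hyperbolic, and then conjugate $\tau$ to $z\mapsto z\pm 1$ on $\H$. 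The skeleton is the same.

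Where you diverge from the paper, and where you can substantially simplify, is the exclusion of the elliptic case. You argue that an elliptic $\tau$ would force $s_m(x)$ to be bounded and then appeal to the asymptotic $s_m(x)\to\infty$ for parabolic nonzero-step maps, justified via parabolic petals at the Denjoy--Wolff point. That asymptotic is a close cousin of Pommerenke's model itself, so the detour risks circularity and at the very least imports machinery that is not needed. The paper's argument in Theorem \ref{fin2} is direct: by property (1) of Theorem \ref{principaleforward}, $s_1(x)=k_Z(\ell(x),\tau(\ell(x)))$, so the nonzero-step hypothesis gives $k_Z(z,\tau(z))>0$ for every $z$ of the form $\tau^{-n}(\ell(x))$; since such points exhaust $Z$ by condition (\ref{due}), $\tau$ has no fixed point in $Z$ and hence cannot be elliptic. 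Only $s_1>0$ is used, not any growth of $s_m$.

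Your closing caveat is the right one and is the genuine gap in this route: Theorem \ref{principaleforward} delivers a \emph{semi}-model, whereas Pommerenke's statement (and the paper's definition of model) requires the intertwining map to be univalent on an $f$-absorbing domain. Passing from semi-model to model is not a formality, and it is exactly where Pommerenke's original argument (via horocycles and petals; compare also the fundamental set construction in \cite{Cowen}) does additional work that Theorem \ref{principaleforward} alone does not supply. Worth noting: Theorems \ref{fin1} and \ref{fin2} of the paper likewise phrase their conclusions as producing a ``canonical Kobayashi hyperbolic model'' while the proofs as written construct semi-models, so your instinct to flag this step is exactly the right thing to do when reading this material.
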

The essential uniqueness of the intertwining mapping in the previous theorem is proved in \cite{PC3}.
The next result gives a generalization of this result to the unit ball.

\begin{theorem}\label{fin2}
Let $f\colon \B^q\to \B^q$ be a parabolic nonzero-step holomorphic self-map with  Denjoy--Wolff point $p\in \partial \B^q$.
Then there exist
\begin{enumerate}
\item an integer $k$ such that $1\leq k\leq q$, 
\item a parabolic automorphism $\v\colon \H^k\to \H^k$ of the form
\begin{equation}\label{lomo2}
\v(z,w)=(z\pm1,e^{it_1}w_1,\dots e^{it_{k-1}}w_{k-1}),
\end{equation}
where $t_j\in \R$ for $1\leq j\leq k-1$, or of the form
\begin{equation}\label{lomo3}
\v(z,w)=(z-2w_1+i, w_1-i,e^{it_2}w_2,\dots e^{it_{k-1}}w_{k-1}),
\end{equation}
 where where $t_j\in \R$ for $2\leq j\leq k-1,$
\item a holomorphic mapping
$h\colon \B^q\to \H^k$ with $\angle_K\hbox{-}\lim_{z\to 0}h(z)=\infty,$
\end{enumerate}
 such that the triple
 $(\H^k,h,\v)$ is a  canonical Kobayashi hyperbolic model for $f$.
\end{theorem}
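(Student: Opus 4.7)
The plan is to imitate the proof of Theorem \ref{fin1}, substituting parabolic-specific information at each step. Since $\B^q$ is cocompact Kobayashi hyperbolic, Theorem \ref{principaleforward} first produces a canonical Kobayashi hyperbolic semi-model $(Z,\ell,\tau)$ for $f$ with $Z$ a holomorphic retract of $\B^q$. Every holomorphic retract of the ball is biholomorphic to $\B^k$ for some $0\le k\le q$ (see e.g.\ \cite[Corollary 2.2.16]{A}), so after this identification the task reduces to determining $k$ and pinning down the automorphism type of $\tau$ on $\B^k$.

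I would identify $\tau$ as parabolic as follows. By Remark \ref{peterpan}, $c(f)=-\log 1=0$, and Theorem \ref{principaleforward}(2) gives $c(\tau)=c(f)=0$, which already rules out $\tau$ being hyperbolic. The nonzero-step hypothesis then does double duty. First, if $k=0$ then $\tau=\id_Z$ and $s_1(x)=k_Z(\ell(x),\tau(\ell(x)))=0$ for every $x$, contradicting nonzero-step; hence $k\ge 1$. Second, $\tau$ cannot be elliptic: if it had an interior fixed point $z_0\in Z$, the identity $Z=\bigcup_{n\ge 0}\tau^{-n}(\ell(\B^q))$ from (\ref{edward}) would yield some $n\ge 0$ and $x\in \B^q$ with $\ell(x)=\tau^n(z_0)=z_0$, so that $s_1(x)=k_Z(z_0,\tau(z_0))=0$, again a contradiction. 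Therefore $\tau$ must be a parabolic automorphism of $\B^k$.

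Next I would invoke the classification of parabolic automorphisms of $\B^k$: up to conjugation by a suitable biholomorphism $\gamma\colon Z\to \H^k$, every such $\tau$ takes either the one-step form (\ref{lomo2}) or the two-step (Heisenberg) form (\ref{lomo3}). Setting $\v\coloneqq \gamma\circ\tau\circ\gamma^{-1}$ and $h\coloneqq \gamma\circ\ell$, the triple $(\H^k,h,\v)$ is once again a canonical Kobayashi hyperbolic semi-model for $f$, now with the required normal form for $\v$.

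The remaining task is to verify the boundary behavior $\angle_K\text{-}\lim_{z\to p}h(z)=\infty$ at the Denjoy--Wolff point $p$ (the statement's ``$z\to 0$'' appears to be a typo for ``$z\to p$''), and this is the step I expect to be the main obstacle. The strategy is to adapt \cite[Proposition 5.11]{ABmod} from the hyperbolic to the parabolic setting. In the hyperbolic case the full (unrestricted) $K$-limit follows from the strictly positive divergence rate through Julia-type estimates on Kor\'anyi regions; in the parabolic case the vanishing divergence rate forces one to work only with special and restricted sequences, and hence with the weaker restricted $K$-limit, leveraging the nonzero-step condition to propagate the iterative information from $f$ to the boundary behavior of $h$.
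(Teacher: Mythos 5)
Your proposal matches the paper's proof essentially step by step: same use of Theorem \ref{principaleforward} to obtain $(Z,\ell,\tau)$, same deduction that $\tau$ is a nontrivial parabolic automorphism of $\B^k$ with $k\geq1$ from $c(\tau)=0$ together with the nonzero-step hypothesis, same normalization of $\tau$ to one of the two forms (\ref{lomo2}), (\ref{lomo3}) via the classification of parabolic automorphisms of the ball (the paper cites de Fabritiis--Iannuzzi), and the same appeal to \cite[Proposition 5.11]{ABmod} for the boundary regularity. Your worry that the final step would require a fresh adaptation of that proposition is unfounded: the paper cites it directly, exactly as in the hyperbolic case of Theorem \ref{fin1}, because it is already formulated to yield the $K$-limit for hyperbolic $f$ and the restricted $K$-limit for parabolic nonzero-step $f$; and you are right that ``$z\to 0$'' in the statement is a typo for ``$z\to p$''. (The paper's non-ellipticity argument is marginally slicker than yours: by Theorem \ref{principaleforward}(1) one has $k_Z(z,\tau(z))=s_1(x)>0$ for every $z\in Z$, which simultaneously rules out $k=0$ and interior fixed points of $\tau$ without needing to invoke (\ref{edward}) separately.)
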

\begin{proof}
Since $\B^q$ is cocompact and Kobayashi hyperbolic, by Theorem \ref{principaleforward} there exists a
 canonical Kobayashi hyperbolic semi-model $(Z, \ell,\tau)$ for $f$.
Since $Z$ is a holomorphic retract of $\B^q$, it is biholomorphic to $\B^k$ for some $0\leq k\leq q$.
Let $z\in Z$, $x\in \B^q$, and $n\geq 0$ such that $\tau^{-n}(\ell(x))=z$.
Then, by (1) of Theorem \ref{principaleforward}, $$k_Z(z,\tau(z))=s_1(z)>0.$$
Hence $k\geq 1$, and $\tau$ is not elliptic. By Remark \ref{peterpan} and by (2) of Theorem \ref{principaleforward}, we  have  $c(\tau)=c(f)=0$. Hence $\tau$ is parabolic.
 There exists (see e.g. \cite{defa-iann})  a biholomorphism $\gamma\colon Z\to \H^k$ such that
  $\varphi\coloneqq \gamma\circ\tau\circ\gamma^{-1} $
  is of the form (\ref{lomo2}) or of the form (\ref{lomo3}).
Setting $h\coloneqq  \gamma\circ\ell$ we have  that $(\H^k, h,\varphi)$ is also a canonical Kobayashi hyperbolic semi-model for $f$.
By \cite[Proposition 5.11]{ABmod}, we have $\angle_K\hbox{-}\lim_{x\to p}h(x)=\infty.$
\end{proof}

\part{Backward iteration}
\section{Preliminaries}
\begin{definition}
Let $X$ be a complex manifold.
We call   {\sl backward (non-autonomous) holomorphic dynamical system} on $X$ any
family $(f_{n,m}\colon X\to X)_{m\geq n\geq 0}$ of holomorphic self-maps such that for all $m\geq u\geq  n\geq 0$, we have
$$f_{n,u}\circ f_{u,m}=f_{n,m}.$$
 For all $n\geq 0$ we denote $f_{n,n+1}$ also by $f_n$.
A backward holomorphic dynamical system $(f_{n,m}\colon X\to X)_{m\geq n\geq 0}$ is called {\sl autonomous}
 if $f_{n}=f_{0}$ for all $n\geq 0$. Clearly in this case $f_{n,m}=f_0^{m-n}.$

\end{definition}
\begin{remark}
Any  family of holomorphic self-maps $(f_n\colon X\to X)_{n\geq 0}$ determines a   backward holomorphic dynamical system $(f_{n,m}\colon X\to X)$ in the following way: for all $n\geq 0$, set $f_{n,n}=\id$, and for all $m> n\geq 0$, set $$f_{n,m}=f_{n}\circ \cdots\circ f_{m-1}.$$ 
\end{remark}

\begin{definition}
Let $X$ be a complex manifold, and let $(f_{n,m}\colon X\to X)$ be a backward holomorphic dynamical system.
An {\sl inverse limit} for $(f_{n,m})$ is  a pair $(\Theta,V_n)$ where $\Theta$ is a set and
$(V_n\colon \Theta\to X)_{n\geq 0}$ is a family of  mappings such that 
 $$ f_{n,m}\circ V_m=V_n,\quad \forall \ m\geq n\geq 0,$$
 satisfying the following universal property:
if $Q$ is a set and if $(g_n\colon Q\to X)$ is a family of  mappings satisfying
$$ f_{n,m}\circ g_m=g_n,\quad \forall \ m\geq n\geq 0,$$
then there exists a  unique  mapping $\Gamma\colon Q\to \Theta$ such that
$$g_n=V_n\circ \Gamma,\quad \forall\ n\geq0.$$

\end{definition}
\begin{remark}
The inverse limit is essentially unique, in the following sense.
Let $(\Theta,V_n)$ and $(Q, g_n)$ be two inverse limits  for $(f_{n,m})$. Then there exists a bijective mapping 
$\Gamma\colon Q\to \Theta$ such that $$g_n= V_n\circ \Gamma,\quad \forall\ n\geq0.$$
\end{remark}

\begin{definition}
Let $X$ be a complex manifold, and let $(f_{n,m}\colon X\to X)$ be a backward holomorphic dynamical system.
A {\sl backward orbit} for $(f_{n,m})$ is a sequence $(x_n)_{n\geq 0}$ in $X$ such that, for all $m\geq n\geq 0$, $$f_{n,m}(x_m)=x_n.$$
\end{definition}
\begin{remark}
An inverse limit for $(f_{n,m})$ is easily constructed.
We define $\Theta$ as the set of all backward orbits for $(f_{n,m})$.
We define a family of mappings $(V_n\colon \Theta\to X)_{n\geq 0}$ in the following way. Let $\beta=(x_m)_{m\geq 0}$ be a backward orbit. Then for all $n\geq 0$, $$V_n(\beta)=x_n.$$
It is easy to see that $(\Theta, V_n)$ is an inverse limit for $(f_{n,m})$.
\end{remark}

\begin{definition}
Let $X$ be a complex manifold and let  $(f_{n,m}\colon X\to X)_{m\geq n\geq 0}$ be a  backward  holomorphic dynamical system.
Let $(\Theta, V_n)$ be the inverse limit of the inverse system $(X,f_{n,m})$. We define an equivalence relation $\sim$ on $\Theta$ in the following way. The backward orbits $(z_n)$ and $(w_n)$ are equivalent if and only if the non-decreasing sequence $(k_X(z_n,w_n))_{n\geq 0}$ is bounded. The class of the backward orbit $(z_n)$ will be denoted by $[z_n]$.
\end{definition}

\begin{lemma}
Let $X$ be a complex manifold, and let $(f_{n,m}\colon X\to X)$ be a backward  holomorphic dynamical system.
 Let $Z$ be a complex manifold and let
$(\alpha_n\colon Z\to X)$ be a sequence of holomorphic mappings such that 
$f_{n,m}\circ \alpha_m=\alpha_n$ for all $ m\geq n\geq 0.$ 
Then  $(\alpha_n(z)) \sim (\alpha_n(w))$ for all $z,w\in Z$.
\end{lemma}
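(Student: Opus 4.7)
The proof will be very short. First I would verify that for each $z\in Z$, the sequence $(\alpha_n(z))_{n\geq 0}$ really is a backward orbit for $(f_{n,m})$: evaluating the hypothesis $f_{n,m}\circ\alpha_m=\alpha_n$ at $z$ gives $f_{n,m}(\alpha_m(z))=\alpha_n(z)$ for all $m\geq n\geq 0$, which is exactly the defining property. Hence it makes sense to ask whether $(\alpha_n(z))\sim (\alpha_n(w))$, and by the definition of $\sim$ the question reduces to showing that the non-decreasing sequence $(k_X(\alpha_n(z),\alpha_n(w)))_{n\geq 0}$ is bounded.

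The bound will come from a single application of the Kobayashi-distance-decreasing property of holomorphic maps. Since each $\alpha_n\colon Z\to X$ is holomorphic, we have
$$k_X(\alpha_n(z),\alpha_n(w))\leq k_Z(z,w),\qquad \forall\, n\geq 0,$$
and the right-hand side is a finite constant, independent of $n$ (the Kobayashi pseudodistance is finite on any connected complex manifold, since any two points can be joined by a chain of analytic discs). This provides the required uniform bound and concludes the argument.

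There is essentially no obstacle here: the only point that merits a moment of care is the finiteness of $k_Z(z,w)$, but this is standard and does not require $Z$ to be Kobayashi hyperbolic. Note in particular that we do not need the stronger inequality coming from the backward dynamical system (namely $k_X(\alpha_n(z),\alpha_n(w))\leq k_X(\alpha_m(z),\alpha_m(w))$ for $m\geq n$, which just recovers the non-decreasing property); the uniform bound follows directly from the existence of a common holomorphic source $Z$ for all the $\alpha_n$.
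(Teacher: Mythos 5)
Your argument is correct and is exactly the one-line proof in the paper: both boil down to the distance-decreasing inequality $k_X(\alpha_n(z),\alpha_n(w))\leq k_Z(z,w)$, with the right-hand side giving a uniform bound. The additional remarks (that each $(\alpha_n(z))$ is a backward orbit, and that finiteness of $k_Z$ holds on any connected complex manifold without hyperbolicity) are correct and merely make explicit what the paper leaves implicit.
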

\begin{proof}
It follows since $k_X(\alpha_n(z),\alpha_n(w))\leq k_Z(z,w)$ for all $n\geq 0.$
\end{proof}

We now introduce a modified version of the  inverse limit for $(f_{n,m})$ which is more suited for our needs.
\begin{definition}
Let $X$ be a complex manifold. Let $(f_{n,m}\colon X\to X)$ be a backward  holomorphic dynamical system.
We call  {\sl canonical inverse limit  associated with the class $[y_n]\in \Theta/_\sim$} for $(f_{n,m})$  a  pair $(Z,\alpha_n)$ where $Z$ is a complex manifold and
$(\alpha_n\colon Z\to X)$ is a sequence of holomorphic mappings such that 
\begin{enumerate}
\item $f_{n,m}\circ \alpha_m=\alpha_n,$ for all $ m\geq n\geq 0,$
\item $(\alpha_n(z))\in[y_n]$ for some (and hence for any) $z\in Z$,
\end{enumerate}
which satisfies the following universal property:
if $Q$ is a complex manifold and if $(g_n\colon Q\to X)$ is a family of holomorphic mappings satisfying
\begin{itemize}
\item[(1')] $f_{n,m}\circ g_m=g_n,$ for all $m\geq n\geq 0,$
\item[(2')] $(g_n(q))\in[y_n]$ for some (and hence for any) $q\in Q$, 
\end{itemize}
then there exists a unique holomorphic mapping $\Gamma\colon Q\to Z$ such that
$$g_n= \alpha_n\circ \Gamma,\quad \forall\ n\geq0.$$
\end{definition}

\begin{proposition}
The canonical inverse limit   for $(f_{n,m})$ associated with the class $[y_n]\in \Theta/_\sim$ is unique in the following sense.
Let $(Z,\alpha_n)$ and $(Q, g_n)$ be two canonical inverse limit for $(f_{n,m})$ associated with the same class $[y_n]$. Then there exists a biholomorphism 
$\Gamma\colon Q\to Z$ such that $$g_n= \alpha_n\circ \Gamma,\quad \forall\ n\geq0.$$
\end{proposition}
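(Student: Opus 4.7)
The plan is to run the standard two-sided universal-property argument, mirroring the uniqueness proof for the canonical Kobayashi hyperbolic direct limit given earlier. First I would apply the universal property of $(Z,\alpha_n)$ to the data $(Q,g_n)$: since $Q$ is a complex manifold, since $(g_n)$ is a family of holomorphic mappings satisfying $f_{n,m}\circ g_m = g_n$ for all $m\geq n\geq 0$, and since the backward orbit $(g_n(q))$ lies in $[y_n]$ (this is hypothesis (2') for $(Q,g_n)$), there is a unique holomorphic $\Gamma\colon Q\to Z$ with $g_n = \alpha_n\circ \Gamma$ for all $n\geq 0$. Symmetrically, applying the universal property of $(Q,g_n)$ to the data $(Z,\alpha_n)$ yields a unique holomorphic $\Xi\colon Z\to Q$ with $\alpha_n = g_n\circ \Xi$ for all $n\geq 0$.

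Next I would show that $\Gamma$ and $\Xi$ are mutually inverse. Composing, the holomorphic map $\Xi\circ \Gamma\colon Q\to Q$ satisfies
\[
g_n\circ (\Xi\circ \Gamma) = (g_n\circ \Xi)\circ \Gamma = \alpha_n\circ \Gamma = g_n,\qquad \forall\, n\geq 0,
\]
while the identity $\mathrm{id}_Q$ trivially also satisfies $g_n\circ \mathrm{id}_Q = g_n$. Because $(Q,g_n)$ is itself a canonical inverse limit associated with $[y_n]$, one can apply the uniqueness clause in its universal property to the data $(Q,g_n)$ (which tautologically satisfies (1') and (2')); this forces $\Xi\circ \Gamma = \mathrm{id}_Q$. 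The symmetric argument using the universal property of $(Z,\alpha_n)$ gives $\Gamma\circ \Xi = \mathrm{id}_Z$. Hence $\Gamma$ is a biholomorphism with inverse $\Xi$, and by construction $g_n = \alpha_n\circ \Gamma$ for all $n\geq 0$.

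There is essentially no obstacle: this is a formal consequence of the universal property, and the only subtlety to verify is that the defining conditions (1') and (2') are indeed preserved when each of $(Z,\alpha_n)$ and $(Q,g_n)$ is used as a test family for the other — which is immediate because both are canonical inverse limits associated with the same class $[y_n]$, so condition (2') is satisfied on both sides.
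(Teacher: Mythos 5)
Your argument is correct and is essentially identical to the paper's own proof: both obtain $\Gamma$ and $\Xi$ from the two universal properties, compose them, and invoke the uniqueness clause (with the identity as the competing map) to conclude $\Gamma\circ\Xi=\id$ and $\Xi\circ\Gamma=\id$. Nothing further to add.
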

\begin{proof}
There exist holomorphic mappings $\Gamma\colon Q\to Z$ and $\Xi\colon Z\to Q$ such that for all $n\geq 0$, we have
$g_n=\alpha_n\circ \Gamma$ and $\alpha_n=g_n\circ \Xi$.
Thus the holomorphic mapping $\Gamma\circ \Xi\colon Z\to Z$ satisfies $$\alpha_n\circ \Gamma\circ \Xi=\alpha_n,\quad \forall\ n\geq0,$$
By the universal property of the canonical inverse limit  associated with the class $[y_n]\in \Theta/_\sim$, this implies that $\Gamma\circ \Xi=\id_Z$. Similarly, we obtain $\Xi\circ \Gamma=\id_Q$.
\end{proof}

\section{Non-autonomous iteration}
Let $X$ be a complete Kobayashi hyperbolic complex manifold.
Let $(f_{n,m}\colon X\to X)_{m\geq n\geq 0}$ be a backward holomorphic dynamical system, and assume that it admits   a relatively compact backward orbit $(y_m)_{m\geq 0}$. 
\begin{remark}\label{boco}
The class $[y_n]\in \Theta/_\sim$ coincides with the subset of $\Theta$ defined by all relatively compact backward orbits of $(f_{n,m})$.
\end{remark}

\begin{remark}\label{ancheB}
Let $K\subset X$ be a compact subset such that
$\{ y_m\}_{m\geq 0}\subset K.$
It follows that,  for all fixed $n\geq 0$,  
\begin{equation}\label{convergeB}
f_{n,m}(K)\cap K\neq \varnothing\quad \forall\, m\geq  n.
\end{equation}
\end{remark}

The sequence $(f_{0,m}\colon X\to X)_{m\geq 0}$ is not compactly divergent by (\ref{convergeB}), and since $X$ is taut, there exists a subsequence $(f_{0,m_{k_0}})_{k_0\geq 0}$ converging to a holomorphic self-map
$\alpha_0\colon X\to X$. The sequence $(f_{1,m_{k_0}}\colon X\to X)_{k_0\geq 0}$ is not compactly divergent by (\ref{convergeB}), and since $X$ is taut, there exists a subsequence $(f_{1,m_{k_1}})_{k_1\geq 0}$ converging to a holomorphic self-map $\alpha_1\colon X\to X$. Iterating this procedure we obtain a family of holomorphic self-maps
$(\alpha_n\colon X\to X)$ satisfying for all $m\geq n\geq 0$,
\begin{equation}\label{hermione}
f_{n,m}\circ \alpha_m=\alpha_n.
\end{equation}
Notice that for all $n\geq 0$ we have 
\begin{equation}\label{dumbledore}
\alpha_n(K)\cap K\neq \varnothing.
\end{equation}

Let now $(m_k)_{k\geq 0}$ be a sequence which for all $j\in \N$ is eventually a subsequence of $(m_{k_j})_{k_j\geq 0}$ (such a sequence exists by a diagonal argument).
The sequence of holomorphic self-maps $(\alpha_{m_k}\colon X\to X)_{k\geq 0}$
 is not compactly divergent by (\ref{dumbledore}), and since  $X$ is taut, there exists a subsequence $(\alpha_{m_h})_{h\geq 0}$
converging to a holomorphic self-map $\alpha\colon X\to X$.

\begin{lemma}
The holomorphic self-map $\alpha\colon X\to X$ is a holomorphic retraction, and for all $n\geq 0$,
\begin{equation}\label{potterB}
\alpha_n\circ \alpha=\alpha_n.
\end{equation}
\end{lemma}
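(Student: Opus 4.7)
The plan is to mirror the argument used in the forward case (Proposition \ref{x1}), exchanging the roles of the two semi-conjugacy identities. There one had $\alpha_m\circ f_{n,m}=\alpha_n$; here the backward identity (\ref{hermione}) reads $f_{n,m}\circ \alpha_m=\alpha_n$. Accordingly, I would first prove the ``lateral'' identity (\ref{potterB}), that is $\alpha_n\circ \alpha=\alpha_n$, and then deduce the retraction property $\alpha\circ \alpha=\alpha$ by specializing the index $n$ to $m_h$ and passing to the limit.

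For (\ref{potterB}), fix $n\geq 0$ and $x\in X$, and choose $h$ large enough that $m_h\geq n$; identity (\ref{hermione}) gives
\[
\alpha_n(x)=f_{n,m_h}(\alpha_{m_h}(x)).
\]
As $h\to\infty$ two things happen simultaneously: first, $\alpha_{m_h}(x)\to \alpha(x)$ since $\alpha_{m_h}\to\alpha$ uniformly on compact subsets; second, because by the diagonal construction $(m_h)$ is eventually a subsequence of each $(m_{k_j})$, for every fixed $n$ one also has $f_{n,m_h}\to \alpha_n$ uniformly on compact subsets. Combining these two convergences (a convergent input fed into a sequence of maps converging uniformly on compact sets), the right-hand side tends to $\alpha_n(\alpha(x))$, while the left-hand side is constant in $h$, yielding (\ref{potterB}).

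To deduce $\alpha\circ \alpha=\alpha$, I would now specialize (\ref{potterB}) to $n=m_h$, obtaining $\alpha_{m_h}(\alpha(x))=\alpha_{m_h}(x)$ for every $h$, and let $h\to\infty$: the right-hand side tends to $\alpha(x)$ and the left-hand side to $\alpha(\alpha(x))$, so $\alpha$ is a holomorphic retraction. The only point requiring care in the whole argument is the joint limit step: since $f_{n,m_h}\to \alpha_n$ only uniformly on compact subsets, one must ensure that $\{\alpha_{m_h}(x)\}$ stays inside a fixed compact set, which is immediate from its convergence to $\alpha(x)$ in $X$. No deeper difficulty is expected, precisely because the construction of $\alpha_n$ and $\alpha$ in the backward setting was tailored (via the iterated diagonal extraction) to make exactly these two simultaneous limits valid.
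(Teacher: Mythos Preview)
Your proposal is correct and follows essentially the same route as the paper: first establish $\alpha_n\circ\alpha=\alpha_n$ from the identity $\alpha_n(x)=f_{n,m_h}(\alpha_{m_h}(x))$ by letting $h\to\infty$, then specialize to $n=m_h$ and pass to the limit again to obtain $\alpha\circ\alpha=\alpha$. The paper's proof is slightly terser about the joint-limit step that you justify explicitly, but the argument is the same.
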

\begin{proof}
Fix $n\geq 0$ and $x\in X$. Then for all $h\geq 0$ such that $m_h\geq n$, we have
$$\alpha_n(x)=f_{n,m_h}(\alpha_{m_h}(x))\stackrel{h\rightarrow\infty}\longrightarrow\alpha_n(\alpha(x)).$$
Thus we have, for all $h\geq 0$, $$\alpha_{m_h}( \alpha(x))=\alpha_{m_h}(x).$$
When $h\rightarrow \infty$, the left-hand side converges to $\alpha(\alpha(x))$, while the right-hand side converges to $\alpha(x)$.
\end{proof}
\begin{definition}
We denote the closed complex submanifold $\alpha(X)$ by $Z$.
\end{definition}

In what follows we denote the restriction $\alpha_n|_Z$ simply by $\alpha_n$.
Let $(\Theta,V_n)$ be the inverse limit of the inverse system $(X,f_{n,m})$. By the universal property of the inverse limit,
there exists a  mapping $\Psi\colon Z\to\Theta$ such that for all $n\geq 0$, $$\alpha_n=V_n\circ \Psi.$$
The mapping $\Psi$ is defined in the following way: if $z\in Z$, then $\Psi(z)$ is the backward orbit $(\alpha_m(z))_{m\geq 0}$.

\begin{proposition}\label{ipad}
The mapping $\Psi\colon Z\to \Theta$ is injective and its image is  $[y_n]$.
\end{proposition}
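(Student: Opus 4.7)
The plan is to handle the two claims separately, in both cases exploiting the identities $\alpha_n\circ \alpha=\alpha_n$ from (\ref{potterB}) and $\alpha|_Z=\id_Z$ (which holds because $\alpha$ is a retraction with image $Z$), together with the fact that $\alpha_n=\lim_{h\to\infty} f_{n,m_h}$ and $\alpha=\lim_{h\to\infty}\alpha_{m_h}$ uniformly on compact subsets.

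For injectivity: suppose $z,w\in Z$ satisfy $\Psi(z)=\Psi(w)$, so that $\alpha_m(z)=\alpha_m(w)$ for every $m\geq 0$. Specializing to $m=m_h$ and passing to the limit as $h\to\infty$ yields $\alpha(z)=\alpha(w)$; since $\alpha|_Z=\id_Z$, we conclude $z=w$. For the inclusion $\Psi(Z)\subseteq [y_n]$, fix $z\in Z$. By (\ref{hermione}) the sequence $(\alpha_n(z))$ is a backward orbit for $(f_{n,m})$, and by Remark \ref{boco} it suffices to show it is relatively compact, equivalently (by definition of $\sim$) that $k_X(\alpha_n(z),y_n)$ is bounded. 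This sequence is non-decreasing in $n$ because $f_{n,m}$ sends $\alpha_m(z)\mapsto \alpha_n(z)$ and $y_m\mapsto y_n$ and is distance-decreasing for $k_X$. Along the subsequence $(m_h)$ one has $\alpha_{m_h}(z)\to\alpha(z)=z$ and $y_{m_h}\in K$, so both sequences eventually lie in a single compact subset of $X$; since $X$ is complete Kobayashi hyperbolic, $k_X$ is bounded there, which yields boundedness along $(m_h)$ and therefore globally.

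For the inclusion $[y_n]\subseteq \Psi(Z)$, start with an arbitrary $(z_n)\in [y_n]$, which is relatively compact in $X$ by Remark \ref{boco}. Extract a subsequence $(m_{h_j})$ of $(m_h)$ such that $z_{m_{h_j}}\to \tilde z\in X$. Since $(m_{h_j})$ is still a subsequence of $(m_k)$, the convergence $f_{n,m_{h_j}}\to \alpha_n$ on compact subsets persists, so for every $n\geq 0$,
\[
z_n = f_{n,m_{h_j}}(z_{m_{h_j}})\;\xrightarrow[j\to\infty]{}\;\alpha_n(\tilde z),
\]
giving $z_n=\alpha_n(\tilde z)$. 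Setting $z\coloneqq \alpha(\tilde z)\in Z$ and applying (\ref{potterB}) produces $\alpha_n(z)=\alpha_n(\alpha(\tilde z))=\alpha_n(\tilde z)=z_n$, whence $\Psi(z)=(z_n)$.

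The main obstacle is the surjectivity step, where there is no canonical formula for the preimage of $(z_n)$: one has to pass to a subsequence of the already-diagonalized sequence $(m_h)$ to find a candidate $\tilde z$, and then use the retraction $\alpha$ to project $\tilde z$ into $Z$ without disturbing the images $\alpha_n(\tilde z)$. The rest is a straightforward limit argument combining the uniform convergence of $f_{n,m_{h_j}}\to\alpha_n$ with the relative compactness provided by $(z_n)\in[y_n]$.
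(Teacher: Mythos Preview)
Your proof is correct and follows essentially the same approach as the paper's own proof: injectivity via $\alpha_{m_h}\to\alpha$ and $\alpha|_Z=\id_Z$, the inclusion $\Psi(Z)\subset[y_n]$ via monotonicity plus boundedness along $(m_h)$, and surjectivity by extracting a convergent subsequence of $(z_{m_h})$ and using $f_{n,m_u}\to\alpha_n$. The only cosmetic difference is in the surjectivity step: the paper observes directly that the limit point $\tilde z$ already lies in $Z$ (by passing to the limit in $\alpha_{m_u}(\tilde z)=z_{m_u}$, which gives $\alpha(\tilde z)=\tilde z$), whereas you project $\tilde z$ into $Z$ via $\alpha$ and invoke $\alpha_n\circ\alpha=\alpha_n$; both yield the same preimage.
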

\begin{proof}
Let $z,w\in Z$ and assume  that $\Psi(z)=\Psi(w)$. It follows that $\alpha_m(z)=\alpha_m(w)$ for all $m\geq 0$, that is $\alpha(z)=\alpha(w)$. Since $\alpha$ is a retraction, we obtain $z=w$. Hence $\Psi\colon Z\to \Theta$ is injective.

We now  show that $\Psi(Z)\subset [y_n]$. If $z\in Z$, we have to show that the  sequence
$(k_X(\alpha_m(z),y_m))$ is bounded. Since $y_m\in K$ for all $m\geq 0$ and  $\alpha_{m_h}(z)\to \alpha(z)$, we have that the subsequence
$(k_X(\alpha_{m_h}(z),y_{m_h}))$ is bounded. Since the sequence $(k_X(\alpha_m(z),y_m))$ is  non-decreasing, it is bounded too.

Finally, we show that for all $(z_m)\in [y_n]$, there exists $z\in Z$ such that $\alpha_m(z)=z_m$ for all $m\geq 0$. Let thus $(z_m)$ be a backward orbit such that the sequence $(k_X(y_m,z_m))$ is bounded. 
 Clearly, the  subsequence $(k_X(y_{m_h},z_{m_h}))$ is also bounded, and thus there exists a subsequence  $(z_{m_u})$ of $(z_{m_h})$ converging to a point $z\in X$.
 It follows that for all $n\geq 0$, $$z_n=f_{n,m_u}(z_{m_u})\stackrel{u\rightarrow\infty}\to \alpha_n(z).$$
 We claim that $z\in Z$. Indeed, letting $u\rightarrow\infty$ in the identity $\alpha_{m_u}(z)=z_{m_u}$ we obtain $\alpha(z)=z$.
 \end{proof}

\begin{proposition}\label{3x}
The pair $(Z,\alpha_n)$ is a canonical inverse limit for $(f_{n,m})$ associated with $[y_n]$.
\end{proposition}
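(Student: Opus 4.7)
The plan is to obtain $\Gamma$ by composing with the bijection $\Psi \colon Z \to [y_n]$ given by Proposition \ref{ipad}. First I would apply the universal property of the inverse limit $\Theta$ to the family $(g_n\colon Q\to X)$ satisfying condition (1'), obtaining a unique mapping $\Phi\colon Q\to \Theta$ with $g_n = V_n\circ \Phi$ for all $n\geq 0$. Condition (2') says $\Phi(Q)\subset [y_n]$, and since by Proposition \ref{ipad} the map $\Psi$ is a bijection from $Z$ onto $[y_n]$, I may set $\Gamma\coloneqq \Psi^{-1}\circ \Phi \colon Q\to Z$. Then for every $n\geq 0$,
\[
\alpha_n\circ \Gamma = V_n\circ \Psi\circ \Psi^{-1}\circ\Phi = V_n\circ \Phi=g_n.
\]

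The main technical point is to verify that $\Gamma$ is holomorphic. I would mimic the strategy of Proposition \ref{x3}, but using the retraction $\alpha$ composed on the \emph{left}. Fix $q_0\in Q$ and set $z_0\coloneqq \Gamma(q_0)\in Z$. Define the sequence of holomorphic self-maps $\beta_h\coloneqq \alpha\circ \alpha_{m_h}|_Z\colon Z\to Z$. Since $\alpha_{m_h}\to \alpha$ uniformly on compact subsets and $\alpha|_Z=\id_Z$, the sequence $(\beta_h)$ converges to $\id_Z$ uniformly on compact subsets. By a standard application of the inverse function theorem in local coordinates around $z_0$, for $h$ large enough $\beta_h$ is a biholomorphism from an open neighborhood $V$ of $z_0$ in $Z$ onto an open neighborhood of $z_0$ in $Z$. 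Fix such an $h$, and by continuity choose a neighborhood $W$ of $q_0$ in $Q$ such that $\alpha(g_{m_h}(W))\subset \beta_h(V)$. On $W$ we then have
\[
\beta_h(\Gamma(q)) = \alpha(\alpha_{m_h}(\Gamma(q))) = \alpha(g_{m_h}(q)),
\]
hence $\Gamma|_W = \beta_h^{-1}\circ \alpha\circ g_{m_h}$, which is holomorphic as a composition of holomorphic maps.

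For uniqueness, if $\Gamma'\colon Q\to Z$ is another holomorphic mapping satisfying $\alpha_n\circ\Gamma'=g_n$ for all $n$, then $V_n\circ \Psi\circ \Gamma'=g_n=V_n\circ \Phi$, so the uniqueness part of the universal property of $\Theta$ yields $\Psi\circ\Gamma'=\Phi=\Psi\circ\Gamma$; injectivity of $\Psi$ then forces $\Gamma'=\Gamma$. The step I expect to require the most care is the biholomorphicity of $\beta_h$ on a fixed neighborhood of $z_0$ rather than merely its maximal rank at $z_0$: one has to combine the uniform convergence $\beta_h\to \id_Z$ on a compact coordinate polydisc (which by Cauchy estimates gives $d\beta_h(z_0)\to \id$) with a quantitative inverse function theorem argument in order to invert $\beta_h$ on a neighborhood $V$ whose image covers $\alpha(g_{m_h}(W))$ for some neighborhood $W$ of $q_0$.
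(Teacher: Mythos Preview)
Your construction of $\Gamma=\Psi^{-1}\circ\Phi$, the verification that $\alpha_n\circ\Gamma=g_n$, and the uniqueness argument are correct and coincide with the paper's.

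The gap is in the holomorphicity step. From $\beta_h(\Gamma(q))=\alpha(g_{m_h}(q))\in\beta_h(V)$ you infer $\Gamma|_W=(\beta_h|_V)^{-1}\circ\alpha\circ g_{m_h}$; but this requires $\Gamma(q)\in V$, which you have not established. Since $\beta_h$ is only a biholomorphism \emph{on $V$} and not globally injective on $Z$, there may be points $z'\in Z\setminus V$ with $\beta_h(z')=\alpha(g_{m_h}(q))$, and nothing in your argument excludes $\Gamma(q)=z'$. Proving $\Gamma(W)\subset V$ would amount to proving continuity of $\Gamma$ at $q_0$, which is precisely what is at stake; so the argument is circular. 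This is the point where the backward situation genuinely differs from the forward one in Proposition~\ref{x3}: there one needed a local \emph{right} inverse of a submersion $\alpha_{m'}\colon X\to Z$, and applying it to a point of $Z$ is unproblematic; here you need a local \emph{left} inverse of $\alpha_{m_h}|_Z$ (or of $\beta_h$), and applying it to $g_{m_h}(q)$ only recovers $\Gamma(q)$ if you already know $\Gamma(q)$ lies in the domain where the inverse is valid.

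The paper sidesteps this entirely. For each fixed $q\in Q$ the identity $\alpha_{m_h}(\Gamma(q))=g_{m_h}(q)$ together with $\alpha_{m_h}|_Z\to\id_Z$ gives $g_{m_h}(q)\to\Gamma(q)$ pointwise. By condition~(2') and Remark~\ref{boco} the backward orbit $(g_m(q))$ is relatively compact, so the family $(g_{m_h}\colon Q\to X)$ is not compactly divergent; tautness of $X$ then yields a subsequence $(g_{m_u})$ converging uniformly on compact subsets to a holomorphic map $g\colon Q\to X$. Comparing with the pointwise limit forces $\Gamma=g$, hence $\Gamma$ is holomorphic. This avoids any local inversion and any appeal to a priori continuity of $\Gamma$.
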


\begin{proof}
Let $Q$ be a complex manifold and let $(g_n\colon Q\to X)$ be a family of holomorphic mappings satisfying
\begin{enumerate}
\item $f_{n,m}\circ g_m=g_n,$ for all $m\geq n\geq 0,$
\item $(g_n(q))\in[y_n]$ for some (and hence for any) $q\in Q$. 
\end{enumerate}
By the universal property of the inverse limit, there exists a unique  mapping $\Phi\colon Q\to \Theta$ such that 
$$g_n=V_n\circ \Phi,\quad \forall\ n\geq0.$$
The mapping $\Phi$ is defined in the following way:  if $q\in Q$, then $\Phi(q)$ is the backward orbit $(g_m(q))_{m\geq 0}$.
 Property (2) implies that $\Phi(Q)\subset [y_n]$.
Set $$\Gamma\coloneqq\Psi^{-1}\circ \Phi\colon Q\to Z.$$
For all $n\geq 0$, 
\begin{equation}\label{gatto}
\alpha_n\circ \Gamma=V_n\circ \Psi\circ\Gamma=V_n\circ \Phi=g_n.
\end{equation}
The uniqueness of the mapping $\Gamma$ follows easily from the uniqueness of the mapping $\Phi$.
The mapping $\Gamma $ acts in the following way: if $q\in Q$, then $\Gamma(q)\in Z$ is uniquely defined by
\begin{equation}\label{mcgrannit}
\alpha_m(\Gamma(q))=g_m(q),\quad \forall\, m\geq 0.
\end{equation}

We now prove that $\Gamma$ is holomorphic. 
Recall that the sequence $(\alpha_{m_h}\colon Z\to X)_{h\geq 0}$ converges uniformly on compact subsets to $\id_Z$.
By Remark \ref{boco}, the sequence $(g_m\colon Q\to X )$ is not compactly divergent.
Since $X$ is taut, the sequence  $(g_{m_h}\colon Q\to X )$ admits a subsequence $(g_{m_u}\colon Q\to X )$ converging uniformly on compact subsets to a holomorphic mapping $g\colon Q\to X$. Thus taking the limit in both sides of 
$$\alpha_{m_u}\circ \Gamma=g_{m_u},
$$
as $m_u\to \infty$, we have $ \Gamma=g$, which implies that $\Gamma$ is holomorphic.
\end{proof}

\begin{proposition}\label{4x}
We have
$$\lim_{m\to\infty}\alpha_m^*k_X=k_Z,$$
and
$$\lim_{n\to\infty}\alpha_m^*\kappa_X=\kappa_Z.$$
\end{proposition}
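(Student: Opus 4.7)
The plan is to mirror the argument used for Proposition \ref{x4} in the forward case, reversing the monotonicity direction. Fix $z,w\in Z$. The key identity is that for all $m'\geq m\geq 0$, the relation $f_{m,m'}\circ\alpha_{m'}=\alpha_m$ gives
\[
k_X(\alpha_m(z),\alpha_m(w))=k_X(f_{m,m'}(\alpha_{m'}(z)),f_{m,m'}(\alpha_{m'}(w)))\leq k_X(\alpha_{m'}(z),\alpha_{m'}(w)),
\]
so the sequence $(\alpha_m^*k_X(z,w))_{m\geq 0}$ is non-decreasing. Moreover, since $Z=\alpha(X)$ is a holomorphic retract of $X$, we have $k_Z=k_X|_{Z\times Z}$, and since each $\alpha_m\colon Z\to X$ is holomorphic we get the upper bound $\alpha_m^*k_X(z,w)\leq k_Z(z,w)$.

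First, I would extract the limit along the distinguished subsequence. By construction, $(\alpha_{m_h}\colon X\to X)$ converges uniformly on compact subsets to the retraction $\alpha$, and its restriction to $Z$ therefore converges uniformly on compacta to $\alpha|_Z=\id_Z$. Since $k_X$ is continuous on $X\times X$ (recall $X$ is taut, in particular Kobayashi hyperbolic), this yields
\[
\lim_{h\to\infty} k_X(\alpha_{m_h}(z),\alpha_{m_h}(w))= k_X(z,w)=k_Z(z,w).
\]
Combined with the monotonicity established above, the full sequence $(\alpha_m^*k_X(z,w))_{m\geq 0}$ converges, and its limit equals $k_Z(z,w)$. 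This gives the first identity.

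For the second identity, I would run the same argument with $\kappa$ in place of $k$. The maps $f_{m,m'}$ are holomorphic, so they are non-increasing for $\kappa_X$, providing the same monotonicity $\alpha_m^*\kappa_X\leq\alpha_{m'}^*\kappa_X$; the inclusion $Z\hookrightarrow X$ and the retraction $\alpha\colon X\to Z$ show $\kappa_Z=\kappa_X|_{TZ}$; and uniform convergence $\alpha_{m_h}|_Z\to\id_Z$ yields convergence of $\alpha_{m_h}^*\kappa_X$ to $\kappa_Z$ pointwise (using the upper semi-continuity and the fact that $\alpha_{m_h}$ converges in the $C^1$ sense on compact subsets of $Z$, which follows from holomorphic convergence).

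The only mildly delicate point will be the $\kappa$-version: one must argue that the pointwise values $\alpha_{m_h}^*\kappa_X(v)$ converge to $\kappa_X(v)=\kappa_Z(v)$ for $v\in TZ$. This follows because holomorphic convergence on compact sets implies convergence of the differentials, so $d\alpha_{m_h}(v)\to v$, and $\kappa_X$ is continuous along such convergent sequences in the tangent bundle on a Kobayashi hyperbolic manifold. Monotonicity then upgrades subsequential convergence to convergence of the whole sequence, exactly as in the distance case.
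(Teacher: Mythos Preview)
Your proof is correct and follows essentially the same route as the paper: convergence of the subsequence $(\alpha_{m_h}|_Z)$ to $\id_Z$ gives the limit along $(m_h)$, the retract property identifies $k_X|_{Z\times Z}=k_Z$, and monotonicity (non-decreasing in the backward case) upgrades this to the full limit. The paper's proof is simply terser, leaving the $\kappa$-statement to ``similarly'' where you spell out the $C^1$-convergence and continuity of $\kappa_X$ on the taut manifold $X$.
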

\begin{proof}
Let $z,w\in Z$. We have $$\lim_{m_h\to\infty}k_X(\alpha_{m_h}(z),\alpha_{m_h}(w))=k_X(\alpha(z),\alpha(w))=k_X(z,w)=k_Z(z,w).$$
where the last identity follows from the fact that $Z$ is a holomorphic retract of $X$.
The first statement follows since the sequence $(k_X(\alpha_{m}(z),\alpha_{m}(w)))_{m\geq 0}$ is non-decreasing. 
The proof of the second statement is similar.
\end{proof}


\begin{theorem}\label{cocononautback}
Let $X$ a cocompact Kobayashi hyperbolic complex manifold, and let $(f_{n,m}\colon X\to X)_{m\geq n\geq 0}$ be a backward  dynamical system. Let $(y_n)$ be a backward orbit.
Then there exists a canonical inverse limit $(Z,\alpha_n)$  for $(f_{n,m})$ associated with $[y_n]$, where $Z$ is a holomorphic retract of $X$.
Moreover, 
\begin{equation}\label{gesso}
\lim_{m\to\infty}\alpha_m^*k_X=k_Z,\quad\mbox{and}\quad \lim_{m\to\infty}\alpha_m^*\kappa_X=\kappa_Z.
\end{equation}
\end{theorem}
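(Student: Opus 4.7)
The plan is to reduce to the situation of Proposition \ref{3x} by conjugating the given backward dynamical system with a suitable family of automorphisms, exactly in the spirit of how Theorem \ref{cocononaut} reduces to Proposition \ref{x3} in the forward case. Since $X$ is cocompact, we may pick a compact set $K\subset X$ with $X=\Aut(X)\cdot K$; since $X$ is cocompact Kobayashi hyperbolic it is complete hyperbolic (hence taut), so the hypothesis of Proposition \ref{3x} is met. For each $n\geq 0$, choose $h_n\in \Aut(X)$ with $h_n(y_n)\in K$ and set
$$\tilde f_{n,m}\coloneqq h_n\circ f_{n,m}\circ h_m^{-1}, \qquad \tilde y_n\coloneqq h_n(y_n).$$
A direct computation gives $\tilde f_{n,u}\circ \tilde f_{u,m}=\tilde f_{n,m}$, so $(\tilde f_{n,m})$ is a backward holomorphic dynamical system, and $\tilde f_{n,m}(\tilde y_m)=\tilde y_n$, so $(\tilde y_n)$ is a backward orbit entirely contained in the compact set $K$.

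Apply Proposition \ref{3x} to $(\tilde f_{n,m})$ with the relatively compact backward orbit $(\tilde y_n)$ to obtain a holomorphic retract $Z$ of $X$ together with a family $(\tilde\alpha_n\colon Z\to X)$ such that $(Z,\tilde\alpha_n)$ is a canonical inverse limit for $(\tilde f_{n,m})$ associated with $[\tilde y_n]$. Now define $\alpha_n\coloneqq h_n^{-1}\circ \tilde\alpha_n$. The intertwining relation $f_{n,m}\circ\alpha_m=\alpha_n$ follows immediately from $\tilde f_{n,m}\circ \tilde\alpha_m=\tilde\alpha_n$ together with the definition of $\tilde f_{n,m}$, and since $h_n$ is a $k_X$-isometry we have $k_X(\alpha_n(z),y_n)=k_X(\tilde\alpha_n(z),\tilde y_n)$ for every $z\in Z$, so $(\alpha_n(z))\in[y_n]$.

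For the universal property, given a complex manifold $Q$ and holomorphic maps $(g_n\colon Q\to X)$ satisfying $f_{n,m}\circ g_m=g_n$ with $(g_n(q))\in[y_n]$, set $\tilde g_n\coloneqq h_n\circ g_n$. Then $\tilde f_{n,m}\circ \tilde g_m=\tilde g_n$ and, using again that $h_n$ is an isometry, $(\tilde g_n(q))\in[\tilde y_n]$. The universal property of $(Z,\tilde\alpha_n)$ yields a unique holomorphic map $\Gamma\colon Q\to Z$ with $\tilde g_n=\tilde\alpha_n\circ\Gamma$, and composing with $h_n^{-1}$ gives $g_n=\alpha_n\circ\Gamma$; uniqueness of $\Gamma$ is inherited from the analogous uniqueness for $\tilde\alpha_n$.

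Finally, the metric identities (\ref{gesso}) follow from Proposition \ref{4x} applied to $(\tilde f_{n,m})$, because each $h_n$ is an isometry for both $k_X$ and $\kappa_X$, so $\alpha_m^* k_X=\tilde\alpha_m^* k_X$ and similarly for $\kappa_X$; letting $m\to\infty$ produces $k_Z$ and $\kappa_Z$ on $Z$ respectively. The only non-bookkeeping point where one must be a little careful is the verification that conjugation produces a backward (not forward) system and that the orbit $(\tilde y_n)$ remains relatively compact, which is the essential trick enabling the reduction to Proposition \ref{3x}; everything else is routine.
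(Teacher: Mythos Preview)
Your proof is correct and follows essentially the same route as the paper: both conjugate the backward system by a family of automorphisms $h_n$ sending the given backward orbit into a fixed compact set, apply Proposition \ref{3x} to the resulting system with a relatively compact backward orbit, and then undo the conjugation and invoke Proposition \ref{4x} for the metric identities. The only difference is cosmetic (you take $h_n(y_n)\in K$ and $\tilde f_{n,m}=h_n\circ f_{n,m}\circ h_m^{-1}$, while the paper takes $h_n^{-1}(y_n)\in K$ and $\tilde f_{n,m}=h_n^{-1}\circ f_{n,m}\circ h_m$).
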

\begin{proof}
Let $K\subset X$ be a compact subset such that $X=\Aut(X)\cdot K$.
For all $n\geq 0$, let $h_n\in\Aut(X)$ be such that $h_n^{-1}(y_n)\in K$. For all $m\geq n\geq 0$ set 
${\tilde f}_{n,m}=h_n^{-1}\circ f_{n,m}\circ h_m $. It is easy to see that $({\tilde f}_{n,m}\colon X\to X)$ is a forward holomorphic dynamical system with a relatively compact backward orbit $(\tilde y_n\coloneqq h_n^{-1}(y_n)).$
We can now apply Proposition \ref{3x} to $({\tilde f}_{n,m}\colon X\to X)$, obtaining a canonical  inverse limit $(Z,\tilde\alpha_n)$ for $({\tilde f}_{n,m})$ associated with $[\tilde y_n]$, where $Z$ is a holomorphic retract of $X$.
For all $n\geq0$ set $\alpha_n\coloneqq  h_n\circ\tilde \alpha_n$. Clearly $$ f_{n,m}\circ \alpha_m=\alpha_n,\quad \forall \ m\geq n\geq 0.$$
Let $Q$ be a complex manifold and let $(g_n\colon  Q\to X)$ be a family of holomorphic mappings satisfying
$$f_{n,m}\circ g_m=g_n,\quad \forall \ m\geq n\geq 0.$$
For all $n\geq0$ set $\tilde g_n\coloneqq  h_n^{-1}\circ g_n$.
Then for all $m\geq n\geq 0$, $${\tilde f}_{n,m}\circ \tilde g_m= {\tilde f}_{n,m}\circ h_m^{-1}\circ g_m=h_n^{-1}\circ f_{n,m}\circ g_m=\tilde g_n.$$
By the universal property of the canonical inverse limit  $(Z,\tilde\alpha_n)$ 
we obtain a holomorphic mapping $\Gamma\colon Q\to Z$ such that
$$\tilde g_n= \tilde\alpha_n\circ \Gamma,\quad \forall\ n\geq0.$$
Hence $g_n=\alpha_n \circ \Gamma$ for all $n\geq0$.

Finally, (\ref{gesso}) follows from Proposition \ref{4x}, since for all $n\geq 0$ the automorphism $h_n\colon X\to X$ is an isometry for $k_X$ and $\kappa_X$.

\end{proof}

\begin{remark}\label{lollo}
Let $(\Theta,V_n)$ be the inverse limit of the inverse system $(X,f_{n,m})$. Let $(y_n)$ be a backward orbit and let $(Z,\alpha_n)$ be the canonical inverse limit associated with $(y_n)$ given by Theorem \ref{cocononautback}. By the universal property of the inverse limit,
there exists a  mapping $\Psi\colon  Z\to \Theta$ such that  $$\alpha_n=V_n\circ \Psi,\quad \forall \ n\geq 0.$$
It is easy to see that $\Psi$ is injective and that $\Psi(Z)=[y_n]$.
In particular, for all $n\geq 0$, $$\alpha_n(Z)=V_n([y_n]).$$
\end{remark}

\section{Autonomous iteration}
\begin{definition}
Let $X$ be a complex manifold and let $f\colon X\to X$  be a holomorphic self-map. A {\sl pre-model} for $f$ is a triple $(\Lambda,h,\v)$ such that $\Lambda$ is a complex manifold, $h\colon \Lambda\to X$ is a holomorphic mapping and $\v\colon \Lambda\to \Lambda$ is an automorphism such that $$f\circ h=h\circ \v.$$ The mapping $h$ is called the {\sl intertwining mapping}. 

Let  $(\Lambda,h,\v)$ and $(Z,\ell,\tau)$  be two pre-models for $f$. A  {\sl morphism of pre-models} $\hat\eta\colon  (\Lambda,h,\v) \to (Z,\ell,\tau) $ is given by a holomorphic mapping  $\eta \colon \Lambda\to Z$ such that the following diagram commutes:
\SelectTips{xy}{12}
\[ \xymatrix{\Lambda \ar[rrr]^h\ar[rd]^\eta\ar[dd]^\v &&& X \ar[dd]^f\\
& Z \ar[rru]^\ell \ar[dd]^(.25)\tau\\
\Lambda\ar'[r][rrr]^(.25)h \ar[rd]^\eta &&& X\\
& Z \ar[rru]^\ell.}
\]
If the mapping $\eta \colon \Lambda\to Z$ is a biholomorphism, then we say that $\hat\eta\colon (\Lambda,h,\v) \to (Z,\ell,\tau) $ is an {\sl isomorphism of pre-models}. Notice that then $\eta^{-1}\colon Z\to \Lambda$ induces a morphism $ {\hat\eta}^{-1}\colon  (Z,\ell,\tau)\to   (\Lambda,h,\v). $
\end{definition}
\begin{definition}
Let $X$ be a complex manifold and let $f\colon X\to X$ be a holomorphic self-map.
Let $(y_n)$ be a backward orbit for $f$. 
 Let $(Z, \ell,\tau)$ be a semi-model for $f$ such that for some (and hence for any) $z\in Z$ we have $(\ell(\tau^{-n}(z)))\in [y_n]$.  
We say that  $(Z, \ell,\tau)$ is a  {\sl canonical pre-model associated with $[y_n]$}  for $f$   if for any pre-model
$(\Lambda,h,\v)$ for $f$ such that for some (and hence for any) $x\in \Lambda$ we have $(h(\v^{-n}(x)))\in [y_n]$,  there exists a unique morphism of pre-models $\hat\eta\colon (\Lambda,h,\v) \to (Z,\ell,\tau) $.
\end{definition}

\begin{remark}
If $(Z, \ell,\tau)$ and $(\Lambda,h,\v)$ are two canonical pre-models for $f$ associated with the same class $[y_n]$, then they are isomorphic.
\end{remark}

\begin{lemma}\label{nnamoacasa}
Let $X$ be a complex manifold and let $f\colon X\to X$  be a holomorphic self-map. 
Let $(y_n)$ be a backward orbit.  If there exists a canonical pre-model $(Z, \ell,\tau)$  for $f$ associated with $[y_n]$, then every backward orbit $(w_n)\in [y_n]$ has bounded step.
\end{lemma}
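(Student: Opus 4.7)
The plan is to prove the stronger statement that every backward orbit in $[y_n]$ has \emph{uniformly bounded} one-step distance, from which bounded step follows immediately. The argument relies only on the existence of a pre-model $(Z,\ell,\tau)$ satisfying $(\ell(\tau^{-n}(z)))\in[y_n]$ for some (equivalently any) $z\in Z$; the universal property of the canonical pre-model is not actually used.

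First, I would fix any $z\in Z$ and consider the backward orbit $\alpha_n\coloneqq \ell(\tau^{-n}(z))$, which by hypothesis lies in $[y_n]$. Set $C\coloneqq k_Z(z,\tau^{-1}(z))$, which is finite because $Z$ is a connected complex manifold. Since $\ell\colon Z\to X$ is holomorphic and $\tau$ is an automorphism of $Z$, the distance-decreasing property of the Kobayashi pseudo-distance gives
$$k_X(\alpha_n,\alpha_{n+1})=k_X(\ell(\tau^{-n}(z)),\ell(\tau^{-n-1}(z)))\leq k_Z(\tau^{-n}(z),\tau^{-n-1}(z))=k_Z(z,\tau^{-1}(z))=C$$
uniformly in $n\geq 0$. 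Hence the reference orbit $(\alpha_n)$ has one-step distance uniformly bounded by $C$.

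Now let $(w_n)\in[y_n]$ be arbitrary. By definition of the equivalence class, the non-decreasing sequence $(k_X(w_n,\alpha_n))_{n\geq 0}$ is bounded, say by some $M\geq 0$. Applying the triangle inequality,
$$k_X(w_n,w_{n+1})\leq k_X(w_n,\alpha_n)+k_X(\alpha_n,\alpha_{n+1})+k_X(\alpha_{n+1},w_{n+1})\leq 2M+C$$
for every $n\geq 0$. Since the sequence $(k_X(w_n,w_{n+1}))_{n\geq 0}$ is non-decreasing (a property recalled earlier in the paper, coming from $f(w_{n+1})=w_n$ and the fact that $f$ is distance-decreasing for $k_X$), its limit $\sigma_1((w_n))$ is at most $2M+C<+\infty$, which is exactly the bounded step condition.

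I do not anticipate any serious obstacle: the lemma is a two-line triangle-inequality argument, whose essential content is that a pre-model automatically produces a reference backward orbit of bounded step in its class, and that the equivalence relation propagates this boundedness. The only point to check is the finiteness of $k_Z(z,\tau^{-1}(z))$, which holds automatically since $Z$ is connected.
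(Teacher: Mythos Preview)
Your proof is correct and follows essentially the same route as the paper's: produce the reference backward orbit $(\ell(\tau^{-n}(z)))$ with uniformly bounded one-step distance via the distance-decreasing property of $\ell$ and the $\tau$-invariance of $k_Z$, then transfer the bound to an arbitrary $(w_n)\in[y_n]$ by the triangle inequality. Your additional remarks (that the universal property is not needed, and the explicit bound $2M+C$) are accurate and do not change the argument.
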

\begin{proof}
Let $z\in Z$. The backward orbit $(\ell(\tau^{-n}(z)))$ has bounded step since for all $n\geq 0$,
$$k_X(\ell(\tau^{-n}(z)),\ell(\tau^{-n-1}(z)))\leq k_Z(\tau^{-n}(z),\tau^{-n-1}(z))=k_Z(z,\tau(z)).$$
Let $(w_n)\in [y_n]$.
Since for all $n\geq 0$, $$k_{X}(w_n,w_{n+1})\leq k_{X}(w_n,\ell(\tau^{-n}(z)))+ k_{X}(\ell(\tau^{-n}(z)),\ell(\tau^{-n-1}(z)))+k_{X}(\ell(\tau^{-n-1}(z)),w_{n+1}),$$ it follows that $(w_n)$ has also bounded step.
\end{proof}

\begin{theorem}\label{principalebackward}
 Let $X$ be a cocompact  Kobayashi hyperbolic complex manifold, and let $f\colon X\to X$ be a holomorphic self-map. Let $(y_n)$ be a backward 
 orbit with bounded step.
 Then there exists a canonical  pre-model $(Z,\ell,\tau)$ for $f$ associated with  $[y_n]$, where $Z$ is a holomorphic retract of $X$.
Moreover, the following holds:
\begin{enumerate}
\item  $\ell(Z)=V_0([y_n])$, 
 \item if $\alpha_m\coloneqq\ell\circ\tau^{-m}$ for all $m\geq 0$, then
$$\lim_{m\to\infty}  \alpha_m^*\, k_X= k_Z,\quad
\lim_{m\to\infty} \alpha_m^*\, \kappa_X= \kappa_Z,$$
\item if $\beta$ is a backward orbit in the class $[y_n]$,
 $$c(\tau)=\lim_{m\to\infty}\frac{\sigma_m(\beta)}{m}=\inf_{m\in \N}\frac{\sigma_m(\beta)}{m}.$$
 \end{enumerate}
\end{theorem}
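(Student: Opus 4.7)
The plan is to mirror the proof of Theorem \ref{principaleforward} by reducing to the non-autonomous result Theorem \ref{cocononautback}. Apply the latter to the autonomous backward dynamical system $(f_{n,m}=f^{m-n})$ and the given backward orbit $(y_n)$: this produces a holomorphic retract $Z$ of $X$ and holomorphic mappings $(\alpha_n\colon Z\to X)$ forming a canonical inverse limit associated with $[y_n]$ and satisfying the metric convergence (\ref{gesso}). Set $\ell\coloneqq\alpha_0$; the task reduces to constructing an automorphism $\tau\colon Z\to Z$ with $f\circ\ell=\ell\circ\tau$ and $\alpha_n=\ell\circ\tau^{-n}$, and then checking the universal property among pre-models.

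To produce $\tau$, I would invoke the universal property of the canonical inverse limit twice. First, the shifted family $(\alpha_{n+1}\colon Z\to X)$ satisfies $f_{n,m}\circ\alpha_{m+1}=\alpha_{n+1}$ by autonomy, and the bounded-step hypothesis on $(y_n)$ gives $(\alpha_{n+1}(z))\in[y_n]$ via
$$k_X(\alpha_{n+1}(z),y_n)\leq k_X(\alpha_{n+1}(z),y_{n+1})+\sigma_1((y_n)),$$
yielding $\delta\colon Z\to Z$ with $\alpha_n\circ\delta=\alpha_{n+1}$. Second, $(f\circ\alpha_n\colon Z\to X)$ satisfies the analogous intertwining and lies in $[y_n]$ because $k_X(f(\alpha_n(z)),y_n)=k_X(f(\alpha_n(z)),f(y_{n+1}))\leq k_X(\alpha_n(z),y_{n+1})$; this yields $\epsilon\colon Z\to Z$ with $\alpha_n\circ\epsilon=f\circ\alpha_n$. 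The checks $\alpha_n\circ\delta\circ\epsilon=\alpha_{n+1}\circ\epsilon=f\circ\alpha_{n+1}=\alpha_n$ and $\alpha_n\circ\epsilon\circ\delta=f\circ\alpha_n\circ\delta=f\circ\alpha_{n+1}=\alpha_n$, combined with uniqueness in the universal property, force $\delta\circ\epsilon=\epsilon\circ\delta=\id_Z$, so $\tau\coloneqq\epsilon=\delta^{-1}$ is an automorphism and induction gives $\alpha_n=\ell\circ\tau^{-n}$ and $f\circ\ell=\ell\circ\tau$.

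For the canonical universal property, given a competitor pre-model $(\Lambda,h,\varphi)$ with $(h(\varphi^{-n}(x)))\in[y_n]$, set $\lambda_n\coloneqq h\circ\varphi^{-n}$; the intertwining $f\circ h=h\circ\varphi$ gives $f_{n,m}\circ\lambda_m=\lambda_n$, and the universal property produces a unique holomorphic $\eta\colon\Lambda\to Z$ with $\alpha_n\circ\eta=\lambda_n$. Taking $n=0$ gives $\ell\circ\eta=h$, while applying the same uniqueness to the valid family $g_n\coloneqq h\circ\varphi^{1-n}$ (both $\eta\circ\varphi$ and $\tau\circ\eta$ satisfy $\alpha_n\circ(\cdot)=g_n$) yields $\eta\circ\varphi=\tau\circ\eta$, completing the morphism of pre-models. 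Property (1) is immediate from Remark \ref{lollo} since $\ell=\alpha_0$, and (2) is (\ref{gesso}). For (3), fixing $z\in Z$, (\ref{gesso}) gives $\sigma_m((\alpha_n(z)))=\lim_n k_X(\alpha_n(z),\alpha_n(\tau^{-m}(z)))=k_Z(z,\tau^{-m}(z))$; \cite[Proposition 2.7]{ABmod} applied to $\tau$ then yields $\lim_m \sigma_m((\alpha_n(z)))/m=\inf_m \sigma_m((\alpha_n(z)))/m=c(\tau)$. For a general $\beta=(w_n)\in[y_n]$, the bounded non-decreasing sequence $(k_X(w_n,\alpha_n(z)))$ makes $|\sigma_m(\beta)-\sigma_m((\alpha_n(z)))|$ uniformly bounded in $m$, preserving the limit after division by $m$, while subadditivity of $\sigma_m(\beta)$ combined with Fekete's lemma identifies $\lim$ with $\inf$.

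The main obstacle I anticipate is the construction of $\tau$ as a bona fide automorphism: though the forward-case template is clean, each invocation of the universal property requires verifying that the auxiliary family $(g_n)$ at hand lies in $[y_n]$, which is precisely where the bounded-step hypothesis on $(y_n)$ is consumed.
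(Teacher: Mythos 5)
Your proposal is correct and follows essentially the same route as the paper: reduce to Theorem \ref{cocononautback} applied to the autonomous system, build $\tau$ (and its inverse) by two applications of the universal property of the canonical inverse limit, and verify the universal property and the three ancillary claims. The only noteworthy (and entirely cosmetic) deviation is in item (3): the paper exploits Remark \ref{lollo} to pick, for each $\beta=(w_n)\in[y_n]$, the unique $z\in Z$ with $\alpha_n(z)=w_n$, so that $\sigma_m(\beta)=k_Z(z,\tau^m(z))$ holds \emph{exactly} and \cite[Proposition 2.7]{ABmod} finishes immediately; you instead fix one $z$ and run a boundedness comparison between $\sigma_m(\beta)$ and $\sigma_m((\alpha_n(z)))$ plus Fekete's lemma for $\lim=\inf$. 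Your version is valid but slightly longer than necessary.
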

\begin{proof}
Let $(f_{n,m}\colon X\to X)$ be the autonomous dynamical system defined by $f_{n,m}=f^{m-n}$. By Theorem \ref{cocononautback}, there exist a holomorphic retract $Z$ of $X$ and a family of holomorphic mappings
$(\alpha_n\colon Z\to X)$ such that the pair $(Z,\alpha_n)$ is a canonical inverse limit associated with $[y_n]$.
The sequence of holomorphic mappings $(\beta_n\coloneqq f\circ\alpha_n\colon Z\to X)$ satisfies, for all $m\geq n\geq 0$,
$$f_{n,m}\circ \beta_m=f^{m-n}\circ f\circ \alpha_m=f\circ\alpha_n=\beta_n.$$
Let $z\in Z$ be the unique point such that $\alpha_m(z)=y_m$ for all $m\geq 0$.
Then for all $m\geq 1$,  $$k_X(\beta_m(z),y_m)=k_X(\alpha_{m-1}(z),y_m)=k_X(y_{m-1},y_m),$$
which is bounded since by assumption the backward orbit $(y_n)$ has bounded step.
By the universal property of the canonical inverse limit associated with $[y_n]$  there exists a  holomorphic self-map $\tau\colon Z\to Z$ such that for all $n\geq 0$, 
 $$\alpha_n\circ \tau= f\circ \alpha_n.$$

We claim that $\tau$ is a holomorphic automorphism.
Set  for all $n\geq 0$, $\gamma_n\coloneqq \alpha_{n+1}$. For all $m\geq n\geq 0$,
$$f_{n,m}\circ \gamma_m= f^{m-n}\circ \alpha_{m+1}=\alpha_{n+1}=\gamma_n.$$
Let $z\in Z$ be the unique point such that $\alpha_m(z)=y_m$ for all $m\geq 0$.
For all $m\geq 0$, $$k_X(\gamma_m(z),y_m)=k_X(\alpha_{m+1}(z),y_m)=k_X(y_{m+1},y_m),$$
which is bounded since by assumption the backward orbit $(y_n)$ has bounded step.
Thus there exists a holomorphic  self-map $\delta\colon Z\to Z$ such that $ \alpha_n\circ \delta=\alpha_{n+1}$ for all $n\geq 0$.
For all $n\geq 0$ we have $$\alpha_n\circ\tau\circ\delta= f\circ  \alpha_n\circ \delta=f\circ \alpha_{n+1}=\alpha_n,$$ and
$$\alpha_n\circ\delta\circ \tau=\alpha_{n+1}\circ \tau=\alpha_n.$$
  By the universal property of the canonical inverse limit associated with $[y_n]$ we have that $\tau $ is a holomorphic automorphism and $\delta=\tau^{-1}$.
Since for all $n\geq 0$, $$\alpha_n\circ \tau^n=f^n\circ \alpha_n=\alpha_0,$$
it follows that $$\alpha_n=\alpha_0\circ \tau^{-n}.$$

Set $\ell\coloneqq \alpha_0$. We claim that the triple $(Z,\ell, \tau)$ is a canonical pre-model for $f$ associated with $[y_n]$.
Indeed, let $(\Lambda, h,\varphi )$ be a pre-model for $f$ 
 such that for some (and hence for any) $x\in \Lambda$ we have $h(\v^{-n}(x))\in [y_n]$.
For all $n\geq 0$, let $\lambda_n\coloneqq h\circ \v^{-n}$. Then by the universal property of the canonical inverse limit associated with $[y_n]$  there exists a holomorphic mapping $\eta\colon  \Lambda\to Z$ such that for all $n\geq 0$ we have $\alpha_n\circ \eta= \lambda_n,$
that is $$\ell\circ \tau^{-n}\circ \eta=h\circ\v^{-n}.$$
Notice that this implies  $\ell\circ \eta=h$, and 
if $n\geq 0$, $$\alpha_n\circ\tau^{-1} \circ\eta \circ \v=h\circ  \v^{-n-1}\circ \v=\lambda_n.$$
Thus by the universal property of the canonical Kobayashi hyperbolic direct limit, $\eta= \tau^{-1}\circ\eta\circ \v.$
 Hence the mapping $\eta\colon \Lambda\to Z$ gives a morphism of pre-models $\hat\eta\colon (\Lambda, h,\varphi )\to (Z,\ell,\tau)$.

Property (1) follows from Remark \ref{lollo}. 
Property (2) follows from (\ref{gesso}). We now prove Property (3).
Let $\beta\coloneqq(w_n)$ be a backward orbit $[y_n]$, and let $z\in Z$ be the unique point such that $\alpha_n(z)=w_n$ for all $n\geq 0$.
Then by Property (2) the backward $m$-step $\sigma_m(\beta)$ satisfies
$$\sigma_m(\beta)=\lim_{n\to\infty}k_X(\alpha_n(z),\alpha_{n+m}(z))=\lim_{n\to\infty}k_X(\alpha_n(z),\alpha_{n}(\tau^{-m}(z)))=k_Z(z,\tau^{-m}(z)).$$
Notice that $k_Z(z,\tau^{-m}(z))=k_Z(z,\tau^{m}(z))$. We have
$$c(\tau)=\lim_{m\to\infty}\frac{k_Z(z,\tau^m(z))}{m}=\lim_{m\to\infty}\frac{\sigma_m(\beta)}{m},$$ and
$$c(\tau)=\inf_{m\in \N}\frac{k_Z(z,\tau^m(z))}{m}=\inf_{m\in \N}\frac{\sigma_m(\beta)}{m}.$$
\end{proof}

\section{The unit ball}\label{theunitballback}

\begin{definition}
Let $f\colon \B^q\to \B^q$ be a holomorphic self-map. Let $\zeta\in \partial \B^q$ be a boundary regular fixed point. The {\sl stable subset} of $f$ at $\zeta$  is defined as 
 the subset consisting of all $z\in \B^q$ such that there exists a backward orbit with bounded step starting at $z$ and  converging to $\zeta$. We denote it by $\mathcal S(\zeta)$.
\end{definition}
Clearly  $\mathcal S(\zeta)$ coincides with the union of all backward orbits in $\B^q$ with bounded  step converging to $\zeta$.

\begin{definition}
Let $f\colon \B^q\to \B^q$ be a holomorphic self-map. A {\sl boundary repelling fixed point} $\zeta\in \partial \B^q$ is a boundary regular fixed point  with dilation   $\lambda>1$.
\end{definition}

The next result generalizes Theorem \ref{bli} to the unit ball.
\begin{theorem}\label{fin3}
Let $f\colon \B^q\to\B^q$ be a holomorphic self-map and let $\zeta\in \partial\B^q$ be a boundary repelling fixed point  with dilation $1<\lambda<\infty$.  Let $(y_n)$ be a backward orbit with bounded step which converges to $\zeta$.
Define $\mu$ by  $$\mu\coloneqq {\lim_{m\to \infty}e^\frac{\sigma_m(\beta)}{m}}\geq \lambda,$$ where  $\beta\in[y_n]$. 
Then $\mu$ does not depend on $\beta\in[y_n]$ and there exist
\begin{enumerate}
\item an integer $k$ such that $1\leq k\leq q$, 
\item a hyperbolic automorphism $\v\colon \H^k\to\H^k$ with dilation $\mu$ at its unique repelling point $\infty$, of the form 
\begin{equation}\label{lomo4}
\v(z,w)= \left(\frac{1}{\mu} z,\frac{e^{it_1}}{\sqrt \mu}w_1,\dots, \frac{e^{it_{k-1}}}{\sqrt \mu}w_{k-1} \right),
\end{equation}
where $t_j\in \R$ for $1\leq j\leq k-1$,
\item a holomorphic mapping $h\colon  \H^k\to \B^q$ with  $K\hbox{-}\lim_{z\to \infty}h(z)=\zeta,$
\end{enumerate}
such that 
  $(\H^{k},h,\v)$ is a canonical pre-model for $f$ associated with $[y_n]$, 
  and $$h(\H^k)=V_0([y_n])\subset \mathcal{S}(\zeta).$$
  If $[y_n]$ contains backward orbit whose convergence to $\zeta$ is special and restricted, then $\mu=\lambda$.
\end{theorem}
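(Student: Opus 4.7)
The plan is to apply Theorem \ref{principalebackward} and then identify the resulting retract as a ball, mirroring the proof of Theorem \ref{fin1}. Since $\B^q$ is cocompact and Kobayashi hyperbolic, Theorem \ref{principalebackward} produces a canonical pre-model $(Z,\ell,\tau)$ associated with $[y_n]$ whose base space $Z$ is a holomorphic retract of $\B^q$; hence $Z$ is biholomorphic to some $\B^k$ with $0\leq k\leq q$. Property (3) of Theorem \ref{principalebackward} gives $c(\tau)=\lim_{m\to\infty}\sigma_m(\beta)/m = \log\mu$ for every $\beta\in[y_n]$, which simultaneously shows that $\mu$ is independent of the choice of $\beta$ and that $\log\mu \geq \log\lambda>0$. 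Combining this with Remark \ref{peterpan} applied to $\tau^{-1}$, the automorphism $\tau$ must be hyperbolic (so in particular $k\geq 1$), and its dilation at the unique boundary repelling fixed point equals $\mu$.

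The next step is to pick, via the standard classification of hyperbolic automorphisms of $\B^k$ used in the proof of Theorem \ref{fin1}, a biholomorphism $\gamma\colon Z\to\H^k$ conjugating $\tau$ to an automorphism $\varphi$ of the form (\ref{lomo4}) whose repelling point is $\infty$. Setting $h\coloneqq \ell\circ \gamma^{-1}$, the triple $(\H^k,h,\varphi)$ is again a canonical pre-model for $f$ associated with $[y_n]$, and $h(\H^k)=V_0([y_n])$ is then immediate from Property (1) of Theorem \ref{principalebackward}. The inclusion $V_0([y_n])\subset \mathcal{S}(\zeta)$ will follow from two observations: every backward orbit in $[y_n]$ has bounded step by Lemma \ref{nnamoacasa}, and the bounded Kobayashi estimate $k_{\B^q}(z_n,y_n)\leq C$ together with $y_n\to\zeta$ forces $z_n\to\zeta$, since points at uniformly bounded Kobayashi distance from a boundary-convergent sequence must converge to the same boundary point in $\B^q$.

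For the boundary behaviour $K\hbox{-}\lim_{z\to\infty}h(z)=\zeta$, I would argue along the same lines as the forward counterpart in \cite[Proposition 5.11]{ABmod}: for any sequence tending to $\infty$ inside a Kor\'anyi region of $\H^k$, one uses the action of $\varphi^{-1}$ to rewrite the sequence as a bounded-in-$k_Z$ perturbation of iterates $\varphi^{-n}(z_0)$, whose images under $h$ differ from $(y_n)$ by a bounded Kobayashi distance in $\B^q$; since $(y_n)$ converges to $\zeta$ and the Kor\'anyi condition upstairs translates into a corresponding non-tangential condition downstairs, the desired $K$-convergence to $\zeta$ follows.

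Finally, if $[y_n]$ contains a backward orbit $\beta=(w_n)$ converging special-restrictedly to $\zeta$, I would show $\mu=\lambda$ by directly computing the asymptotic step along $\beta$. Combining the Julia--Wolff--Carath\'eodory-type boundary estimates at the boundary repelling fixed point with the fact that along a special restricted sequence the multidimensional Kobayashi distance is asymptotic to the one-dimensional hyperbolic distance along the complex normal direction through $\zeta$, one obtains $\sigma_m(\beta)/m\to \log\lambda$; by the independence of $\mu$ from the choice of orbit, this gives $\mu=\lambda$. I expect this last identification to be the main technical obstacle, since it depends on a careful analysis of the boundary geometry of the ball; the earlier steps are essentially direct consequences of Theorem \ref{principalebackward} together with well-known structural facts about the automorphism group and holomorphic retracts of $\B^q$.
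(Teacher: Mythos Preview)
Your strategy matches the paper's proof almost step for step, but there is one genuine gap: you assert that Property~(3) of Theorem~\ref{principalebackward} ``simultaneously shows\ldots that $\log\mu\geq\log\lambda>0$''. It does not. Property~(3) only identifies $c(\tau)$ with $\lim_{m}\sigma_m(\beta)/m$; it says nothing about the boundary dilation $\lambda$ at $\zeta$. Without the inequality $\mu\geq\lambda$ you cannot conclude $c(\tau)>0$, and hence you cannot rule out that $\tau$ is parabolic (nor get $k\geq1$ and the normal form (\ref{lomo4})). The paper inserts a short but indispensable argument here: since $\lambda^n$ is the dilation of $f^n$ at $\zeta$, one has for any $w\in\B^q$
\[
n\log\lambda=\liminf_{z\to\zeta}\bigl(k_{\B^q}(w,z)-k_{\B^q}(w,f^n(z))\bigr)\leq \liminf_{z\to\zeta} k_{\B^q}(z,f^n(z)),
\]
and evaluating the right-hand side along the backward orbit $\beta$ (which converges to $\zeta$) gives $n\log\lambda\leq\sigma_n(\beta)$, hence $\lambda\leq e^{\sigma_n(\beta)/n}$ for every $n$, and thus $\mu\geq\lambda>1$. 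Only then does Remark~\ref{peterpan} (applied to $\tau$, not $\tau^{-1}$) yield that $\tau$ is hyperbolic with dilation $1/\mu$ at its Denjoy--Wolff point.

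The remaining parts of your outline are essentially what the paper does: the $K$-limit at $\infty$ is obtained by citing \cite[Theorem~5.6]{ABmod} (your sketch via \cite[Proposition~5.11]{ABmod} is the same circle of ideas), the inclusion $V_0([y_n])\subset\mathcal S(\zeta)$ uses Lemma~\ref{nnamoacasa} exactly as you say, and the special--restricted identity $\mu=\lambda$ is handled by invoking \cite[Proposition~4.12]{Ar}, which formalizes precisely the Julia--Wolff--Carath\'eodory heuristic you describe.
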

\begin{proof}

Since $\B^q$ is cocompact and Kobayashi hyperbolic, by Theorem \ref{principalebackward} there exists a
 canonical pre-model $(Z, \ell,\tau)$ for $f$ associated with $[y_n]$.
Since $Z$ is a holomorphic retract of $\B^q$, it is biholomorphic to $\B^k$ for some $0\leq k\leq q$. 
By (3) of Theorem \ref{principalebackward},  if $\beta$ is a backward orbit in the class $[y_n]$,
 $$\mu=\lim_{m\to\infty}e^\frac{\sigma_m(\beta)}{m}=e^{c(\tau)}.$$
In particular, $\mu$ does not depend on $\beta\in[y_n]$. 

We claim that $\mu\geq\lambda$. Let $n\geq 0$.
Since $\lambda^n$ is the dilation at $\zeta$ of the mapping $f^n$, we have, for any $w\in \B^q$ (see e.g. \cite{A}),
$$n\log \lambda=\liminf_{z\to \zeta}(k_{\B^q}(w,z)-k_{\B^q}(w,f^n(z))).$$
Since $$k_{\B^q}(w,z)-k_{\B^q}(w,f^n(z))\leq k_{\B^q}(z,f^n(z)),$$ we have that 
 $n\log\lambda\leq {\sigma_n(\beta)}$, that is,
$\lambda\leq e^{\frac{\sigma_n(\beta)}{n}}$. Thus $\mu\geq\lambda$.

The automorphism $\tau$ is hyperbolic since the dilation  at its Denjoy--Wolff point is equal to $e^{-c(\tau)}$ and $$e^{-c(\tau)}=\frac{1}{\mu}\leq\frac{1}{\lambda}<1.$$

There exists (see e.g.  \cite[Proposition 2.2.11]{A}) a biholomorphism $\gamma\colon Z\to \H^k$ such that
$\varphi\coloneqq \gamma\circ\tau\circ\gamma^{-1} $ is of the form (\ref{lomo4}).
Setting $h\coloneqq  \ell\circ \gamma^{-1}$ we have  that $(\H^k, h,\varphi)$ is also a canonical pre-model for $f$ associated with $[y_n]$.

We now address the regularity at $\infty$ of the intertwining mapping $h$. Let $(z_n,w_n)$ be a backward orbit in $\H^k$ for $\tau$. Then  $(z_n,w_n)$ converges to $\infty$ and there exists $C>0$ such that  
$$k_{\H^k}((z_n,w_n),( z_{n+1},w_{n+1}))\leq C,\quad \mbox{and}\quad k_{\H^k}((z_n,w_n),(z_n, 0))\leq C.$$ 
 Clearly $g(z_n,w_n)$ is a backward orbit for $f$ which converges to $\zeta\in\partial\B^q$. Then  \cite[Theorem 5.6]{ABmod}  yields the result.

Theorem \ref{principalebackward} yields that $h(\H^k)=V_0([y_n])$. Let $x\in  V_0([y_n])$. Then there exists a backward orbit $(w_n)\in[y_n] $ starting at $x$, which clearly converges to $\zeta$. By Lemma \ref{nnamoacasa} the backward orbit $(w_n)$ has bounded step, and thus $ V_0([y_n])\subset \mathcal{S}(\zeta)$.

Let $\beta\coloneqq(w_n)$ be a  special and restricted backward orbit in $[y_n]$ converging to $\zeta$. Then the same proof as in \cite[Proposition 4.12]{Ar} shows that
$$\log \mu=\lim_{m\to\infty}\frac{\sigma_m(\beta)}{m}=\log\lambda.$$
\end{proof}

We leave the following open questions.
\begin{question}
With notations from the previous theorem, does the identity  $\lambda=\mu$ always hold?
\end{question}
\begin{question}
Let $f\colon \B^q\to\B^q$ be a holomorphic self-map and let $\zeta\in \partial\B^q$ be a boundary repelling fixed point  with dilation $1<\lambda<\infty$. By \cite[Lemma 3.1]{O},  if $\zeta$ is isolated
  from other boundary repelling fixed points with dilation less or equal than $\lambda$, then $\mathcal{S}(\zeta)\neq \varnothing$. Is the same true if the point $\zeta$ is not isolated?
\end{question}
\begin{question}
Let $f\colon \B^q\to\B^q$ be a parabolic self-map and let $p\in \partial\B^q$ be its Denjoy--Wolff point.  Let $(y_n)$ be a backward orbit with bounded step which converges to $p$. Let $(Z,\ell,\tau)$ be a canonical pre-model associated with $[y_n]$. Clearly $\tau$ cannot be elliptic. Is $\tau$ parabolic? 
In the unit disc, it follows from \cite[Theorem 1.12]{PC2} that this is true.
\end{question}

\end{document}